\DeclareMathOperator{\tr}{tr}
\chardef\bslash=`\\ % p. 424, TeXbook
\begin{document}

%\numberwithin{equation}{section}

\newtheorem{Theorem}{Theorem}[section]

\newtheorem{cor}[Theorem]{Corollary}

\newtheorem{Conjecture}[Theorem]{Conjecture}

\newtheorem{Lemma}[Theorem]{Lemma}
\newtheorem{lemma}[Theorem]{Lemma}
\newtheorem{property}[Theorem]{Property}
\newtheorem{Proposition}[Theorem]{Proposition}
\newtheorem{ax}[Theorem]{Axiom}
\newtheorem{claim}[Theorem]{Claim}

\newtheorem{nTheorem}{Surjectivity Theorem}

\theoremstyle{definition}
\newtheorem{Definition}[Theorem]{Definition}
\newtheorem{problem}[Theorem]{Problem}
\newtheorem{question}[Theorem]{Question}
\newtheorem{Example}[Theorem]{Example}

\newtheorem{remark}[Theorem]{Remark}
\newtheorem{diagram}{Diagram}
\newtheorem{Remark}[Theorem]{Remark}
\newcommand{\diagref}[1]{diagram~\ref{#1}}
\newcommand{\thmref}[1]{Theorem~\ref{#1}}
\newcommand{\secref}[1]{Section~\ref{#1}}
\newcommand{\subsecref}[1]{Subsection~\ref{#1}}
\newcommand{\lemref}[1]{Lemma~\ref{#1}}
\newcommand{\corref}[1]{Corollary~\ref{#1}}
\newcommand{\exampref}[1]{Example~\ref{#1}}
\newcommand{\remarkref}[1]{Remark~\ref{#1}}
\newcommand{\corlref}[1]{Corollary~\ref{#1}}
\newcommand{\claimref}[1]{Claim~\ref{#1}}
\newcommand{\defnref}[1]{Definition~\ref{#1}}
\newcommand{\propref}[1]{Proposition~\ref{#1}}
\newcommand{\prref}[1]{Property~\ref{#1}}
\newcommand{\itemref}[1]{(\ref{#1})}

%\newcommand{\bysame}{\mbox{\rule{3em}{.4pt}}\,}

%       Math definitions
\newcommand{\CE}{\mathcal{E}}
\newcommand{\CG}{\mathcal{G}}\newcommand{\CV}{\mathcal{V}}
\newcommand{\CL}{\mathcal{L}}
\newcommand{\CM}{\mathcal{M}}
\newcommand{\A}{\mathcal{A}}
\newcommand{\CO}{\mathcal{O}}
\newcommand{\B}{\mathcal{B}}
\newcommand{\CS}{\mathcal{S}}
\newcommand{\CX}{\mathcal{X}}
\newcommand{\CY}{\mathcal{Y}}
\newcommand{\CT}{\mathcal{T}}
\newcommand{\CW}{\mathcal{W}}
\newcommand{\CJ}{\mathcal{J}}

\newcommand{\st}{\sigma}
\renewcommand{\k}{\varkappa}
\newcommand{\Frac}{\mbox{Frac}}
\newcommand{\XC}{\mathcal{X}}
\newcommand{\wt}{\widetilde}
\newcommand{\wh}{\widehat}
\newcommand{\mk}{\medskip}
\renewcommand{\sectionmark}[1]{}
\renewcommand{\Im}{\operatorname{Im}}
\renewcommand{\Re}{\operatorname{Re}}
\newcommand{\la}{\langle}
\newcommand{\ra}{\rangle}
\newcommand{\LND}{\mbox{LND}}
\newcommand{\Pic}{\mbox{Pic}}
\newcommand{\lnd}{\mbox{lnd}}
\newcommand{\GLND}{\mbox{GLND}}\newcommand{\glnd}{\mbox{glnd}}
\newcommand{\Der}{\mbox{DER}}\newcommand{\DER}{\mbox{DER}}
\renewcommand{\th}{\theta}
\newcommand{\ve}{\varepsilon}
\newcommand{\1}{^{-1}}
\newcommand{\iy}{\infty}
\newcommand{\iintl}{\iint\limits}
\newcommand{\capl}{\operatornamewithlimits{\bigcap}\limits}
\newcommand{\cupl}{\operatornamewithlimits{\bigcup}\limits}
\newcommand{\suml}{\sum\limits}
\newcommand{\ord}{\operatorname{ord}}
\newcommand{\Gal}{\operatorname{Gal}}
\newcommand{\bk}{\bigskip}
\newcommand{\fc}{\frac}
\newcommand{\g}{\gamma}
\newcommand{\be}{\beta}
\newcommand{\dl}{\delta}
\newcommand{\Dl}{\Delta}
\newcommand{\lm}{\lambda}
\newcommand{\Lm}{\Lambda}
\newcommand{\om}{\omega}
\newcommand{\ov}{\overline}
\newcommand{\vp}{\varphi}
\newcommand{\kap}{\varkappa}

\newcommand{\Vp}{\Phi}
\newcommand{\Varphi}{\Phi}
\newcommand{\BC}{\mathbb{C}}
\newcommand{\C}{\mathbb{C}}\newcommand{\BP}{\mathbb{P}}
\newcommand{\BQ}{\mathbb {Q}}
\newcommand{\BM}{\mathbb{M}}
\newcommand{\BR}{\mathbb{R}}\newcommand{\BN}{\mathbb{N}}
\newcommand{\BZ}{\mathbb{Z}}\newcommand{\BF}{\mathbb{F}}
\newcommand{\BA}{\mathbb {A}}
\renewcommand{\Im}{\operatorname{Im}}
\newcommand{\idd}{\operatorname{id}}
\newcommand{\ep}{\epsilon}
\newcommand{\tp}{\tilde\partial}
\newcommand{\doe}{\overset{\text{def}}{=}}
\newcommand{\supp} {\operatorname{supp}}
\newcommand{\loc} {\operatorname{loc}}
% \define {\un}{\underline}
%\newcommand {\ov}{\overline}
\newcommand{\de}{\partial}
\newcommand{\z}{\zeta}
\renewcommand{\a}{\alpha}
\newcommand{\G}{\Gamma}
\newcommand{\der}{\mbox{DER}}

\newcommand{\Spec}{\operatorname{Spec}}
\newcommand{\Sym}{\operatorname{Sym}}
\newcommand{\Aut}{\operatorname{Aut}}

\newcommand{\Idd}{\operatorname{Id}}

\newcommand{\tG}{\widetilde G}
%%%%%%%%%%%%%%%%%%%%%%%%%%%%%%%%%
\begin{comment}
\newcommand{\FX}{\mathfrac {X}}
\newcommand{\FV}{\mathfrac {V}}
\newcommand{\SX}{\mathrsfs {X}}
\newcommand{\SV}{\mathrsfs {V}}
\newcommand{\SO}{\mathrsfs {O}}
\newcommand{\SD}{\mathrsfs {D}}
\newcommand{\Sr}{\rho}
\newcommand{\SR}{\mathrsfs {R}}

\end{comment}

\newcommand{\FX}{\mathfrac {X}}
\newcommand{\FV}{\mathfrac {V}}
\newcommand{\SX}{\mathcal {X}}
\newcommand{\SV}{\mathcal {V}}
\newcommand{\SO}{\mathcal {O}}
\newcommand{\SD}{\mathcal {D}}
\newcommand{\Sr}{\rho}
\newcommand{\SR}{\mathcal {R}}

%\begin{document}

%\begin{document}

\title {Geometry and arithmetic of verbal dynamical systems on simple groups}

\author[Bandman, Grunewald, Kunyavski\u\i , Jones] {Tatiana Bandman, Fritz Grunewald,
and Boris Kunyavski\u\i \\ with an appendix by Nathan Jones}
\address{Bandman: Department of
Mathematics, Bar-Ilan University, 52900 Ramat Gan, ISRAEL}
\email{bandman@macs.biu.ac.il}

\address{Grunewald: Mathematisches Institut der
Heinrich-Heine-Universit\"at D\"usseldorf, Universit\"atsstr. 1,
40225 D\"usseldorf, GERMANY}
\email{grunewald@math.uni-duesseldorf.de}

\address{Kunyavskii: Department of
Mathematics, Bar-Ilan University, 52900 Ramat Gan, ISRAEL}
\email{kunyav@macs.biu.ac.il}

\address{Jones: Department of Mathematics, 
University of Mississippi, 
Hume Hall 305,
P. O. Box 1848,
University, MS  38677-1848, 
USA}
\email{ncjones@olemiss.edu}

\begin{abstract}

We study dynamical systems arising from word maps on simple groups. 
We develop a geometric method based on the classical trace map for 
investigating periodic points of such systems. These results lead 
to a new approach  to the search of Engel-like sequences of
words in two variables which characterize finite solvable groups. 
They also give rise to some new
phenomena and concepts in the arithmetic of dynamical systems.

\end{abstract}

%\subjclass[2000] {Primary 13B10, 14E09; Secondary 14J26, 14J50,
%14L30, 14D25, 16W50}
%  \endsubjclass
%\keywords {Affine varieties, $\BC -$actions, locally-nilpotent
%derivations.}
% \endkeywords
\maketitle

%\begin{comment}

\begin{epigraph}
%\qitem
{\textgreek{\s{e}n \s{a}rq\hci \ \Cs{h}n \r{o} l\'ogos\dots}}%and the Word was with
%God, and the Word was God.}
%ka\`{i} \r{o} l\'ogos \Cs{h}n pr\`{o}c t\`{o}n Je\'{o}n, ka\`{i}
%Je\`{o}c \Cs{h}n \r{o} l\'ogos.}}
{\textgreek{KATA IWANNHN 1:1}\footnotemark[1]}
%\end{epigraph}

%\bigskip
%\qitem{In the beginning was the Word\dots} %and the Word was with
%God, and the Word was God.}
%{John 1:1}
\end{epigraph}
%\end{comment}

\footnotetext[1]{{In the beginning was the Word\dots}{John 1:1}}

\begin{quote}{\fontsize{8pt}{3.7mm}
\selectfont\tableofcontents}
\end{quote}

%\tableofcontents

%\part*{}

\section{Introduction} \label{intro}

The initial goal of the present paper was to get deeper understanding of what is behind recent 
results achieved in describing the class of finite solvable groups by identities in two 
variables \cite{BGGKPP1}, \cite{BGGKPP2}, \cite{BWW}. Although the results were purely group-theoretic, 
it was clear that the key role is played by geometry and dynamics. Byproducts of this investigation seem 
to us not less interesting than the initial problem. 

We reformulated the original problem in the language of a verbal dynamical system on an algebraic group $G$ 
(the notion of its own interest). We study these systems for the case $G=SL(2)$, the most important for the 
initial group-theoretic problem. Towards this end, we 
\begin{itemize}
\item prove several surjectivity theorems for the classical trace map over finite fields; 
\item introduce a new method based on the trace map and these theorems. 
\end{itemize}

This allowed us not only to explain the mechanism of the proofs from the above cited papers but to obtain a method for 
producing more sequences of the same nature.

These arithmetic-geometric considerations led us to a new notion of residual periodicity of a dynamical system which reflects 
its local-global behaviour. This concept will hopefully yield new results in the arithmetic of dynamical systems on algebraic 
varieties. Here we present some primary examples and propose some conjectures.

%Recently    certain iterative sequences of words in free groups
%were used to characterize finite solvable groups
%(\cite{BGGKPP1}, \cite{BGGKPP2}, \cite{BWW}).
%These results used special constructions of fixed points or invariant sets
%for some polynomial maps.  In this paper we study a construction of a dynamical system
%attached to a word or, more generally, to a tuple of words in a free group.

To be more  precise, let $F_{r+s}$ be the free group with basis $x_1,\dots ,x_s, u_1,\dots ,u_r,$ and let

\begin{equation}\label{i1}
\CW=\left\{\begin{aligned}{} & w_1(x_1,\dots ,x_s, u_1,\dots ,u_r),\\
& \dots ,\\
& w_r(x_1,\dots ,x_s, u_1,\dots ,u_r).\end{aligned}
\right\}\end{equation}
be an $r$-tuple of words in $F_{r+s}.$  Thus for any group $G$ we obtain a self-map:
\begin{equation} 
D_{\CW}:G^{r+s}\to G^{r+s},
\label{selfmap}
\end{equation}
$$(g_1,\dots, g_s, v_1,\dots, v_r)\mapsto (g_1,\dots, g_s, w_1(g_1, \dots,g_s, v_1,\dots,v_r),\dots, w_r(g_1, \dots,g_s, v_1, \dots,v_r).$$

Choosing $G$ to be a linear algebraic group defined over some field $k$, we thus find 
a polynomial self-map of the underlying affine variety $G^{r+s}$ attached to $\CW.$

 A set $M\subset G^{r+s}$ is called invariant if $D_{\CW}(M)\subset M.$

 For our purposes it is important to introduce initial conditions
 and, for every group $G,$ a so-called forbidden set.
 Let $\CJ=(f_1(x_1,\dots ,x_s),\dots ,f_r(x_1,\dots ,x_s))$ be words in $F_s.$
 Then given $G$ and $(g_1,\dots, g_s)\in G^s$  we have an iterative sequence of
 $r$-tuples of elements of $G$:
 $$e_0=(f_1(g_1,\dots ,g_s),\dots ,f_r(g_1,\dots ,g_s)),\dots ,$$
 $$e_{n+1}=(w_1(g_1,\dots, g_s, e_n),\dots , w_r(g_1,\dots, g_s, e_n)),\dots $$ 
 We are interested in finding $(g_1,\dots ,g_s)$ such that the sequence 
 $e_0, e_1,\dots$ has certain properties. To find such $(g_1,\dots ,g_s)$,  
 we add $s$  extra ``tautological'' variables %$(g_1, \dots ,g_s)$ 
 %to our dynamical system.
and obtain a self-map as in \eqref{selfmap}.

Then given $\CW$, $G$ and $\CJ$, we have an iterative sequence:

$$e'_0=(g_1,\dots , g_s, f_1(g_1,\dots ,g_s),\dots ,f_r(g_1,\dots ,g_s)),\dots ,$$
$$e'_{n+1}=D_{\CW}(e'_n),\dots $$

The forbidden set consists of the choice of an invariant set $I_G\subset G^{r+s}$
for every group $G.$

We call the triple $ D= (\CW,\CJ, I_G)$ a {\it verbal dynamical system}. We are interested
in invariant sets disjoint from  $I_G.$

%\medskip

%%%%%%%%%%%%%%%%%%%%%%%%%%%%%%%%%%%%%%%%%%%%%%%%%%%%%%%%%%%%%%%

\begin{Remark} \label{restr}
It is sometimes convenient to modify this general setup as follows.
 
(i) It may happen that the $r$-tuple $\CW$ depends on less than $r+s$ variables 
(say, of $x_1,\dots ,x_s$ only $x_1,\dots ,x_t$, $t<s$, show up in $\CW$ whereas 
the rest of the $x_i$ only appear in the initial conditions $\CJ$). In such a case, 
we will restrict our dynamical system to $G^{r+t}$  (in particular, the forbidden set 
is also chosen inside $G^{r+t}$). See Example \ref{s=r=1} below. 

(ii) One can fix an $s$-tuple $(g^{\circ }:=(g^{\circ }_1,\dots ,g^{\circ }_s)\in G^s$ and 
consider the corresponding ``fibre'' of our dynamical system $D^0_{\CW}\colon G^r\to G^r$ defined by 
$$D^0_{\CW}((v_1,\dots, v_r))=(w_1(g^{\circ }_1, \dots,g^{\circ }_s, v_1,\dots,v_r),\dots, w_r(g^{\circ }_1, \dots, g^{\circ }_s, v_1, \dots,v_r)).$$
In particular, for $r=1$ we arrive at a self-map $G\to G$. This simplified system will be largely used in what follows. 
\end{Remark}
%%%%%%%%%%%%%%%%%%%%%%%%%%%%%%%%%%%%%%%%%%%%%%%%%%%%%%%%%%%%%%%%%%%

\begin{Example} \label{s=2,r=1}
Take $s=2$, $r=1$ and  consider a triple $D_1$ consisting of

$$\CW=([xux^{-1},yuy^{-1}]),$$
$$\CJ=(x^{-2}y^{-1}x),$$
$$ I_G=\{G\times G\times\{1\}\}.$$

The corresponding map  is $$D_{\CW}(x,y,u)=(x,y,[xux^{-1},yuy^{-1}]).$$
The associated iterative sequence is
$$e_0=x^{-2}y^{-1}x,\quad 
e_1=[x^{-1}y^{-1},yx^{-2}y^{-1}xy^{-1}],\quad e_2=[xe_1x^{-1},ye_1y^{-1}],\dots $$

A key step in our characterization of finite solvable
groups \cite{BGGKPP1}, \cite{BGGKPP2} can now be reformulated as follows:
\end{Example}

\begin{Theorem} \label{ours}
For $G=SL(2,q)$  the dynamical system  $D_1$ has a fixed point outside $I_G$
for every $q>3$.
\end{Theorem}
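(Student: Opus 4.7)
The plan is to rewrite the fixed-point condition as a commutator relation between conjugate elements and attack it through the Fricke trace identity on $SL(2)$. A fixed point $(x, y, u)$ of $D_{\CW}$ lying outside $I_G$ is precisely a triple with $u \neq I$ and
$$u = [xux^{-1}, yuy^{-1}].$$
Setting $A := xux^{-1}$ and $B := yuy^{-1}$, we are looking for $A, B \in SL(2, q)$ conjugate to $u$ and satisfying $[A, B] = u$. Conversely, given any $(A, B)$ with $\tr A = \tr B = t$ and $\tr [A, B] = t$ (and $t \neq \pm 2$, so that the trace determines the conjugacy class), a simultaneous conjugation of the pair arranges $[A, B]$ to coincide with any prescribed $u$ of trace $t$; then $A$ and $B$ are conjugates of $u$ and the required $x, y$ are read off.

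The second step is the trace reduction. The Fricke identity
$$\tr[A, B] = (\tr A)^2 + (\tr B)^2 + (\tr AB)^2 - \tr A \cdot \tr B \cdot \tr AB - 2,$$
specialized at $\tr A = \tr B = t$ and $\gamma := \tr AB$, forces
$$\gamma^2 - t^2\gamma + (2t^2 - t - 2) = 0. \quad (\ast)$$
Thus it suffices to find $(t, \gamma) \in \mathbb{F}_q^2$ solving $(\ast)$, to exclude the degenerate solution $(t, \gamma) = (2, 2)$ (which forces $[A, B] = I$), and to lift the trace data $(t, t, \gamma)$ to an actual pair $(A, B) \in SL(2, q)^2$ generating an absolutely irreducible subgroup.

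The lifting uses the surjectivity theorems for the classical trace map promised in the introduction: the Fricke map $(A, B) \mapsto (\tr A, \tr B, \tr AB)$ from $SL(2, q)^2$ to $\mathbb{A}^3(\mathbb{F}_q)$ is surjective, and its generic fibres contain irreducible pairs. The existence of admissible $\mathbb{F}_q$-points on $(\ast)$ is then a counting question: completing the square in odd characteristic turns $(\ast)$ into $w^2 = D(t)$ with $D(t) := t^4 - 8t^2 + 4t + 8$, a genus-one curve (with $D(2) = 0$); Hasse--Weil supplies at least $q + 1 - 2\sqrt{q}$ affine $\mathbb{F}_q$-points, far more than the handful of inadmissible $(t, \gamma)$, once $q$ exceeds a small threshold. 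The remaining small cases, including characteristic $2$ where $(\ast)$ degenerates to the Artin--Schreier equation $\gamma^2 + t^2 \gamma + t = 0$, can be handled by direct inspection.

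The main obstacle is the surjectivity/irreducibility step: guaranteeing that the trace data $(t, t, \gamma)$ solving $(\ast)$ is realized by a pair $(A, B) \in SL(2, q)^2$ whose commutator is genuinely non-central and whose generated subgroup is irreducible. This fine control of the trace-map fibre over $\mathbb{F}_q$ -- in particular, excluding reducible pairs and pairs producing $[A, B] = \pm I$ -- is precisely what the paper's surjectivity theorems are built to supply. Once that is in place, the point count on $(\ast)$ and the conjugation adjustment arranging $[A, B] = u$ rather than merely conjugate to $u$ are both routine.
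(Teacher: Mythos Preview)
Your reduction to a two-variable Fricke identity via $A := xux^{-1}$, $B := yuy^{-1}$ is an elegant shortcut, genuinely different from the paper's route: the paper (following \cite{BGGKPP1}, \cite{BGGKPP2}) works with the full three-variable trace projection $\pi\colon SL(2)^3 \to Z \subset \BA^7$ of Subsection~\ref{3vm}, intersects the fixed-point locus $F(\vp)$ with the variety $W(w)$ encoding the initial condition $u = w(x,y) = x^{-2}y^{-1}x$ (see \eqref{Ww} and Section~\ref{un}), and invokes the seven-variable surjectivity Theorem~\ref{surj}. Incidentally, your worry about reducible pairs is unnecessary: if $\langle A,B\rangle$ had a common eigenvector then $[A,B]$ would be unipotent of trace $2$, contradicting $\tr[A,B] = t \neq 2$; so Theorem~\ref{rational} alone suffices for the lift.

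The genuine gap is the initial condition $\CJ$. Theorem~\ref{ours} is presented as a reformulation of the key step of \cite{BGGKPP1}, \cite{BGGKPP2}, and what those papers actually establish---and what Section~\ref{un} rederives by the trace method---is the existence of $(x,y)$ with $u_0(x,y) = x^{-2}y^{-1}x$ satisfying $u_0 = [xu_0x^{-1}, yu_0y^{-1}]$ and $u_0 \neq 1$. Your construction produces $u = [A,B]$ first and only then chooses conjugators $x$, $y$ with $A = xux^{-1}$, $B = yuy^{-1}$; these are determined only up to the centraliser of $u$, and nothing in your argument forces $x^{-2}y^{-1}x = u$. If one reads the statement literally as ``$D_{\CW}$ has a fixed point outside $I_G$'', your argument is complete and simpler than the paper's; but for the group-theoretic application that Theorem~\ref{ours} is meant to encode, the constraint $u = u_0(x,y)$ is essential, and the paper's seven-variable machinery is built precisely to carry it.
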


A key step to the characterization obtained in \cite{BWW} can be reformulated 
in a similar way:

\begin{Example} \label{s=r=1}
Take $s=2$, $r=1$, $\CW=([y^{-1}uy, u^{-1}])$, $\CJ=(x).$
As the variable $x$ does not show up in $\CW$ but only appears in $\CJ$ (and so $t=1$), we proceed 
as in Remark \ref{restr}(i) and consider the restricted system 
$G^2\to G^2$, $(y,u)\mapsto (y, [y^{-1}uy, u^{-1}])$, 
with the forbidden set $I_G:=\{G\times \{1\}\}$. Denote this system by $D_2$. 

The associated iterative sequence is
$$e_0=x,\quad 
e_1=[y^{-1}xy,x^{-1}],\quad e_2=[y^{-1}e_1y, e_1^{-1}],\dots $$

The main result of \cite{BWW} can now be read off as follows:
\end{Example}

\begin{Theorem} \label{Wilsons}
For $G=SL(2,q)$  the dynamical system  $D_2$ has a periodic point outside $I_G$
for every $q>3$.
\end{Theorem}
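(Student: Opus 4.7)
The plan is to follow the same trace-map strategy used to prove \thmref{ours}. Fix $Y \in SL(2,q)$ with $Y \neq \pm I$ and consider the self-map $f\colon SL(2,q) \to SL(2,q)$, $U \mapsto [Y^{-1}UY, U^{-1}]$; a periodic point of $D_2$ outside $I_G$ amounts to such a $Y$ together with some $U \neq 1$ whose forward $f$-orbit is periodic. In the Fricke coordinates $x := \tr Y$, $y := \tr U$, $z := \tr(YU)$, the commutator-trace identity together with the $SL(2)$-relation $\tr(Y^{-1}U) = xy - z$ yields
\[
\tr(Y^{-1}UYU^{-1}) \;=\; x^2 + y^2 + z^2 - xyz - 2 \;=:\; T,
\]
so that $\tr f(U) = 2y^2 + T^2 - y^2 T - 2$ and an analogous computation expresses $\tr(Y f(U))$ as a polynomial in $(x,y,z)$. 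Hence $f$ descends to a polynomial endomorphism $\sigma\colon \BA^3 \to \BA^3$ preserving the first coordinate, and $U = 1$ projects to the fixed point $(y,z) = (2, x)$ of the restriction $\sigma|_{\{x\} \times \BA^2}$.

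The core task is to show that for every $q > 3$ and every $Y \neq \pm I$, the finite dynamical system $\sigma|_{\{x\} \times \BF_q^2}$ admits a periodic cycle disjoint from $(2, x)$. Because the slice is finite, every forward $\sigma$-orbit is automatically eventually periodic, so it suffices to exhibit a single starting point $(y_0, z_0) \in \BF_q^2$ whose $\sigma$-orbit never reaches $(2, x)$. Mirroring the treatment of $D_1$, the plan is to identify a $\sigma$-invariant algebraic curve $C_x \subset \BA^2$ arising from the polynomial structure of $\sigma$ or from a natural symmetry of the word $[y^{-1}uy, u^{-1}]$, to bound its number of $\BF_q$-points via Weil-type estimates, and to verify that for $q > 3$ sufficiently many of them lie on a $\sigma$-cycle distinct from the fixed point $(2,x)$.

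Having produced such a periodic trace orbit of period $n$, apply the surjectivity theorem for the trace map to lift the triple $(x, y, z)$ to a pair $(Y, U) \in SL(2,q)^2$. Then $U$ and $f^n(U)$ share the same pair $(\tr,\ \tr Y\cdot)$, so they lie in a common fibre of the trace map; generically this fibre is a single orbit of the centralizer $Z_G(Y)$ acting by conjugation (over $\BF_q$ it may split into two such orbits, in which case we replace $n$ by $2n$), hence $f^n(U) = h U h^{-1}$ for some $h \in Z_G(Y)$. Because $h$ commutes with $Y$, the map $f$ is equivariant under conjugation by $h$, yielding $f^{jn}(U) = h^j U h^{-j}$ for every $j \geq 1$; choosing $j = \ord(h)$ in the finite group $SL(2,q)$ gives $f^{jn}(U) = U$, so $(Y, U)$ is a genuine periodic point of $D_2$, and $U \neq 1$ because its trace orbit avoided $(2, x)$. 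The principal obstacle lies in the middle paragraph: the word here is more complex than for $D_1$, the polynomial $\sigma$ has noticeably higher degree, and producing the invariant curve together with a careful count of its $\BF_q$-points for every $q > 3$ will likely require a case analysis in terms of the conjugacy type of $Y$ (split torus, non-split torus, or unipotent, according to whether $x^2 - 4$ is a nonzero square, a non-square, or zero in $\BF_q$), paralleling the arguments of \cite{BWW}.
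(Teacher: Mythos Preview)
Your overall framework---descend via the trace map, locate periodic behaviour downstairs, lift via the surjectivity theorem---matches the paper's strategy, but the execution differs in two places and in one of them you miss the main simplification.

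\emph{The lifting step.} Your centralizer argument is correct but more elaborate than needed. If $Q$ is a periodic point of $\psi$ of period $n$, then $\pi^{-1}(Q)(\BF_q)$ is a finite set mapped to itself by $\tilde\vp^{\,n}$; any self-map of a finite set has a periodic point, and that point is then periodic for $\tilde\vp$ as well. No description of the fibre as a $Z_G(Y)$-orbit is required. The paper in fact works with \emph{fixed} points of $\psi$ (\lemref{fix} and the proof of \thmref{condition}), so even the passage to $\tilde\vp^{\,n}$ disappears.

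\emph{The middle step.} Here your proposal is only a plan, and the plan---fix $Y$, work in the two-dimensional slice $\{x=\tr Y\}$, search for a $\sigma$-invariant curve there, then split according to the conjugacy type of $Y$---is the hard route. The paper's key idea is \emph{not} to fix $Y$. It studies the fixed-point variety $\Phi$ of $\psi$ in the full $\BA^3_{s,u,t}$; since $\Phi$ is cut out by two equations in three-space and is nonempty, it is automatically a curve (\lemref{positive}). An explicit computation in \subsecref{subsec:BWW} shows that, after discarding the trivial line $L_1=\{s=2,\,u=t\}$, $\Phi$ breaks into two absolutely irreducible genus-one curves $W_1,W_2$, each with three punctures. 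Weil's bound then furnishes $\BF_q$-points on $W_i$ for all $q\ge 7$, and the value $t_0=\tr Y$ is simply read off from such a point---there is no need to prescribe $Y$ in advance or to treat split, non-split and unipotent $Y$ separately. The cases $q<7$ are handled directly.

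So the idea you are missing is: look for fixed points of the trace map with $\tr Y$ allowed to vary. This replaces the anticipated case analysis by a single irreducibility computation.
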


In the present paper we mostly restrict ourselves to considering 
the most important case $G=SL(2,k)$  (though in Section \ref{sec:Suzuki}
we also consider the  Suzuki groups).

In the case $G=SL(2,k)$ we introduce a new method based on classical
results of  Klein, Fricke, Vogt,   %(\cite{Fr}, \cite{FK}, \cite{Vo}, cited here from the paper \cite{Ho}), 
Magnus %\cite{Ma};  
%see \cite{Go} for a nice modern exposition. 
from which   
it follows (see, e.g., \cite{Pe2}) that there is a polynomial map $\psi\colon\BA^N(k)\to \BA^N(k)$
making the diagram

\begin{equation}
\begin{CD}
G^{s+r} @>{D_{\CW}}>> G^{s+r} \\
@V\pi VV   @V\pi VV\\
\BA^N(k) @>{\psi}>> \BA^N(k)\label{i2}
\end{CD}
\end{equation}

 commutative.  Here $\pi$ is defined using the traces of products as in \thmref{Hor} 
 below.

 In the case $r=1$, $t=1$ the projection $\pi\colon SL(2,k)^2\to \BA^3(k)$ is defined as
 $$\pi(x,y)=(\tr(x), \tr(xy), \tr(y)).$$

 In the case $r=1$, $s=2$ the map $\pi\colon SL(2,k)^3\to \BA^7(k)$ is defined as
 $$\pi(x,y,u)=(\tr(x), \tr(y), \tr(u), \tr(xy), \tr(xu), \tr(yu), \tr(xyu)),$$
 and the image of $\pi$ is contained in a hypersurface $Z\subset\BA^7$ (see 
 \eqref{hha1} below for an explicit equation of $Z$).

 We prove the following surjectivity theorems
 (see Theorems \ref{rational} and \ref{surj} below).

 \begin{nTheorem}\label{surjectivity1}
 For any point $ \ov{a}=(s_0,u_0,t_0)\in \BA^3(\BF_q)$ the set $\pi^{-1}(\ov{a})\subset SL(2,q)^2$ is nonempty. 
 \end{nTheorem}

\begin{nTheorem}\label{surjectivity2}
% There is an explicitly defined open subset $Y\subset Z$
%(see Theorem $\ref{surj}$) such that 
For any point $y\in Z(\BF_q)$ the set $\pi^{-1}(y)\subset SL(2,q)^3$
is nonempty.
\end{nTheorem}

These surjectivity theorems allow us to obtain sufficient conditions for the existence of fixed points 
of the reduced (modulo $p$) dynamical system, uniformly in $p$, and treat concrete examples arising from 
\cite{BGGKPP1}, \cite{BGGKPP2}, \cite{BWW}. 

\medskip

On the other hand, the above dynamical reinterpretation of our 
group-theoretic problem leads to some interesting
``local-global" properties of dynamical systems on algebraic varieties.
By an {\it AG dynamical system} (AG stands for arithmetic-geometric) we mean a triple $ D=(X,V,\vp),$ where
\begin{itemize}
\item either $X$ is an algebraic variety  defined  over a global field $K, $
$\varphi\colon X\to X$ is a dominant endomorphism and  $V\subset
X(K)$ is a subset invariant under $\vp;$
\item or $X$ is an $\SO$-scheme ($\SO$ stands for the ring of integers in
$K$), $\varphi\colon X\to X$ is dominant and $V\subset X(\SO)$ is a
$\vp$-invariant subset.
\end{itemize}

A periodic point is a fixed point of an iteration $\varphi^{(n)}$ of
$\vp.$ Together with the system $D=(X,V,\vp)$, we consider its
reductions  $D_p=(X_p,V_p,\vp_p),$ where $p$ ranges over all but
finitely many places of $K$ (see \secref{final} for precise
definitions). For each reduction, we consider the length $\ell_p$ of
the shortest orbit $C_p$ which does not intersect the ``forbidden''
set $V_p\subset X_p$. If such an orbit does not exist, we set
$\ell_p=\infty.$ We are interested in the distribution of
$\ell_p$'s. More specifically, let $M\subset\mathbb N$ be the set of
all primes $p$ such that $\ell_p=\infty.$  Let $N=\{\ell_p: p\not\in
M\}.$

\begin{itemize}
\item\label{tt1} If $M$ is  infinite, we call the system {\bf  residually aperiodic}.

\item \label{tt2} If $M$ is  finite, we call the system
{\bf residually periodic}.

\item \label{tt3} If both $M$ and  $N$ are  finite, we call the
system {\bf strongly  residually periodic}.
\end{itemize}

\noindent Precise definitions, examples and discussion of these
notions are the subject of Section \ref{final}.

\begin{Remark}\label{Hr}
According to a theorem of Hrushovski \cite{Hr}, $\vp$ has a periodic
point in $X(\ov{\mathbb F}_p)\setminus V(\ov{\mathbb F}_p)$ provided
$X$ is an affine $\mathbb F_p$-variety and $V$ is a proper affine subset of $X$ 
($\ov{\mathbb F}_p$ stands for the algebraic closure of $\mathbb
F_p$). In contrast, we are only interested in periodic  points in
$X(\mathbb F_p)$.
% {\bf B.}
% The simplest non- trivial examples
% of strongly residually periodic systems are  the following:
%  $X=\BA^1, \ V=\emptyset$
%and $\vp$ is a  polynomial
% in one variable having a root mod $p$ for
%every prime $p$
%(\cite {BB}, \cite {Br}, \cite {BBH}, \cite{So}, \exampref{splitfield}).
\end{Remark}

%\vskip 0.2 cm

In this language our approach to the problem of characterization 
of finite solvable groups looks as follows.  
We consider word maps of groups $G=SL(2,q)$. 
For every word map $\varphi\colon G^m\to G, \quad m=2,3$ 
(and an additional  word $f\colon G^2\to G$ in the case $m=3$) 
we define a verbal dynamical system (see, e.g., Examples \ref{s=2,r=1}, \ref{s=r=1}). 
Regarding the group as an affine variety, we obtain from a verbal dynamical system
an AG dynamical system on an affine $\BZ$-scheme. 
(In Example \ref{s=2,r=1} we have $X=SL(2)\times SL(2)\times SL(2)$, 
$V=SL(2)\times SL(2)\times \{1\}$, $\vp(x,y,u)= (x,y,[xux^{-1},yuy^{-1}])$, in Example 
\ref{s=r=1} we have $X=SL(2)\times SL(2)$,  
$V=SL(2)\times \{1\}$, $\vp(y,u)= (y,[y^{-1}uy,u^{-1}])$.) 
The word map is a ``good" candidate if and only if that system is residually
periodic. 
Using the trace map we simplify the AG system by including it into a
commutative diagram

\begin{equation}
\begin{CD}
X  @>{\tilde\vp}>> X   \\
@V\pi VV   @V\pi VV\\
Y @>{\psi}>> Y\label{ddd2}
\end{CD}
\end{equation}

\noindent where  $\pi$ is a surjective projection, defined over
$\BZ,$ and $\psi$ is the trace map  (see Subsections
\ref{2vm}, \ref{3vm} for more details). Moreover, the dynamical system
$D'=(Y,\pi(V),\psi)$ has special geometric properties allowing us to
find out when it is strongly residually periodic. Note that $\pi$ is
surjective, therefore if $D'$ is strongly  residually periodic then
$D$ is residually periodic.

%%%%%%%%%%%%%%%%%%%%%%%%%%%%%%%%%%%%%%%%%%%%%%%%%%%%%%%%%%%%%%%%%%%%%%%%%%%%%%%

%%%%%%%%%%%%%%%%%%%%%%%%%%%%%%%%%%%%%%%%%%%%%%%%%%%%%%%%%%%%%%%%%%%%%%%%%%%%%%%%%%%%%%

\medskip

It is an interesting question what arithmetic or geometric
conditions can guarantee residual periodicity (or aperiodicity) of a
given dynamical system.  Certainly, if the forbidden set $V$ is
empty then the system is residually periodic.

The role of arithmetic may be demonstrated by the following
example.

\begin{Example} \label{splitfield}
Let $a$ and $b$ denote distinct integers, and let
$H(x)=(x^2-a)(x^2-b)(x^2-ab)+x.$ The polynomial $H(x)$ defines a
morphism $H\colon\BA_{\BZ}^1\to\BA_{\BZ}^1.$

For every $p$ the reduced morphism $H_p$ has fixed points. Indeed,
if $p\vert a$ or $p\vert b$, we have $H_p(0)=0$. If none of $a$ and
$b$ is divisible by $p$, we can use the fact that the Legendre
symbol is a multiplicative function and conclude that at least one
of three numbers: $a$, $b$, $ab$, is a square modulo $p$. A square
root of this number is then a fixed point of $H_p$, so we have
$\ell_p=1.$

On the other hand, the  morphism  $H\colon\BA_{\BZ}^1\to\BA_{\BZ}^1$
may have no periodic points. Indeed, according to \cite{Na}, the
period of a rational point for a monic polynomial cannot exceed 2,
and Magma computations show that  for $a=2$, $b=3$
there is no rational solution to the equation $H(H(x))-x=0.$
\end{Example}

This example shows that one of the reasons for residual periodicity
may be the existence of periodic points defined over a splitting
field. Polynomials of that kind were studied in \cite{BB},
\cite{Br}, \cite{BBH}, \cite{So}.

\medskip

As to geometric conditions, the dynamical system under consideration 
may happen to be residually periodic because of the existence of 
invariant functions (say, when there is an ``extra'' coordinate on 
which $\varphi$ acts trivially) as in the following simple example.

\begin{Example} \label{fibered}
Let
$D=(X,V,\vp),$ where $X=\BA^2,$ $V= \{(a,b)\in X: a=\pm 1  \text{ or
} b=\pm 1\text { or }a=0\text{ or }b=0\}$, and $\vp(a,b)=(a^2b,b).$
Consider the integral model $\SD=(\SX,\SV,\Phi)$ where
$\SX=A_{\BZ}^2, \ \  \SV=\{(a,b)\in X(\BZ): a=\pm 1  \text{ or }
b=\pm 1\text { or }a=0\text{ or }b=0\}$ and  $\Phi(a,b)=(a^2b,b).$
We have  $M=\{2,3\}$. %(see \defnref{resperv}). 
The variety of fixed
points of $\Phi$ is a curve $C=\{(a,b) : ab=1\}$, $C\bigcap
\SV=\{\pm(1,1)\}.$ Nevertheless, for any prime $p>3$ we have
$C_p\setminus V_p\neq\emptyset,$  i.e. $\ell_p=1.$
\end{Example}

These examples show that there are at least two general reasons for a
dynamical system to be  strongly residually periodic. The first one
is purely arithmetic as in  \exampref{splitfield}. Our first
observations show that even in the simplest cases of one-dimensional
systems, arising questions are related to nontrivial arithmetical
problems. In the case of elliptic curves, one of such problems has been 
solved by N.~Jones by establishing a weakened version of the 
long-standing Koblitz's conjecture (see the appendix to the present 
paper).

The second one is of  geometric nature as for the trace map above.
This map has an invariant function which leads to the dimension
jump for the variety of fixed points.
Once we can prove that this variety $W$ is absolutely irreducible
(or at least contains an absolutely irreducible component), we can
apply the Lang--Weil estimates \cite{LW} to conclude that there
exists a fixed point on the reduction $W_q$ for $q$ big enough.
(Of course, if $\dim W=1$, classical Weil's estimates (see, e.g.,
\cite{FJ}) are quite enough.)

We believe that residually periodic dynamical systems is an object
worthy of investigation. The following particular case seems to be
especially interesting. Consider a  $\BZ$-scheme $X,$ a dominant
endomorphism $\vp $ of $X$, and define $V$ as  the union of all
finite $\vp $-orbits in $X(\BZ).$  Then $V_p$ is the union of orbits
of the reductions of all preperiodic points of $\varphi$. In simple
words, this means that in this case we are interested in the
distribution of the smallest periods of the  maps $\varphi_p$ not
coming from preperiodic points of $\varphi$. To the best of our
knowledge, such a classification of dynamical systems according to
their ``hidden'' periodicity did not appear in the literature.

%\vskip 0.3 cm
\medskip

The structure of the paper is as follows.

Section \ref{psl} contains a general framework of our method for the
most important case $G=PSL(2,q)$. The Suzuki groups are treated in
Section \ref{sec:Suzuki}. Applications to concrete sequences are
contained in Section \ref{Examples}. Section \ref{final} is
completely devoted to the new notion of residually periodic
dynamical systems. We give basic definitions, consider simple
examples and state some conjectures. 
%Some technical details and direct computations   are not included
%into the paper. An interested  reader can find  all the details
%in our preprint \cite {BGK}. 
The appendix contains a theorem of N.~Jones answering one 
the questions posed in Section \ref{final}.

\section{Notation and preliminaries}\label{np}

Recall that in \cite{BGGKPP1}, \cite{BGGKPP2},
\cite{BWW} there have been exhibited explicit families
$\alpha_n(x,y)$, $\beta_n(x,y)$ of words in 
$F_2$ allowing one to characterize the class $\mathcal S$
of finite solvable groups in the class of all finite groups as
follows:

%\vskip 0.3 cm
\medskip

\noindent {\it A finite group $G$ belongs to $\mathcal S$ if and
only if there exists $n$ such that $G$ satisfies the identity
$\gamma_n(x,y):=[\alpha_n(x,y),\beta_n(x,y)]\equiv 1.$ }

%\vskip 0.3 cm
\medskip

Here $[a,b]=aba^{-1}b^{-1}$ denotes the commutator.

As in the introduction, we produce these recurrence formulas using the 
dynamical viewpoint. We consider the dynamical systems $D_1$ and $D_2$ 
from Examples \ref{s=2,r=1} and \ref{s=r=1}, respectively, 
and consider their fibres as in Remark \ref{restr}(ii). 
This means that for any group $G$ we introduce the 
maps $G\to G$: $\rho_{u,v}(w):=[uwu^{-1}, vwv^{-1}]$,
$\sigma_u(w):=[u^{-1}wu,w^{-1}]$. Then the $n$-th term of the characterizing sequence 
can be written as the $n$-th iteration of the map $\rho$ (resp. $\sigma$):
\begin{equation}
\gamma_n(x,y)=\rho^{(n)}_{x,y}(\gamma_0(x,y)) \label{eq:ours-it}
\end{equation}
(resp.
\begin{equation}
\gamma_n(x,y)=\sigma^{(n)}_{y}(\gamma_0(x,y))), \label{eq:BWW-it}
\end{equation}
where $\gamma_0(x,y)=x^{-2}y^{-1}x$ (resp. $\gamma_0(x,y)=x$).

Suppose that $S$ is a solvable group of derived length $n$. Then the
recursive structure of the above formulas shows that
$\gamma_n(x,y)\equiv 1$ in $S$. To establish the converse statement,
it is enough to show that the identity $\gamma_n(x,y)\equiv 1$ does
not hold in any finite minimal simple non-solvable group $G$. (That
is precisely what was done in \cite{BGGKPP1}, \cite{BGGKPP2},
\cite{BWW}.)

To establish this fact in the case of sequences of type
\eqref{eq:BWW-it}, it is enough to show that there exists $u=y_0\in G$
such that the map $\sigma_u$ has a (non-identity) {\bf periodic}
point, i.e. there exist a positive integer $m$ and an element $1\ne
g\in G$ such that $g$ can be written in the form $g=\gamma (x,y_0)$
and $\sigma_{y_0}^{(m)}(g)=\sigma_{y_0}(g)$. (For sequence
\eqref{eq:BWW-it}, that is precisely what was done in \cite{BWW}.) It
is important to note here that every point has a finite orbit (i.e.
is preperiodic in the sense of \cite{Si1}) but {\it a priori} it can
happen that all these  orbits contain identity, which being fixed is
the only periodic point. We need an orbit that never hits the
identity, and therefore contains another periodic point. This explains our choice of 
the forbidden set in Examples \ref{s=2,r=1} and \ref{s=r=1}.

Let us recall the list of minimal simple non-solvable groups \cite{Th}:

{\rm (1)} $G=PSL(2,p)$, $p=5$ or $p\equiv \pm 2 \pmod 5$, $p\ne 3$,

{\rm (2)} $G=PSL(2, 2^p)$,

{\rm (3)} $G=PSL(2, 3^p)$, $p$ is an odd prime,

{\rm (4)} $G=Sz(2^p)$, $p$ is an odd prime,

{\rm (5)} $G=PSL(3,3)$.

Here $Sz$ stands for the Suzuki group (twisted form of $B_2$, see,
e.g., \cite{HB} for details).

To obtain a characterization of finite solvable groups, we wish to
find a word $\varphi\in F_2(x,y)$ with the following properties:

(i) for any finite solvable group $S$ there exists an integer $n$
such that for every $y\in S$ the map $\varphi_y^{(n)}\colon S\to S$
is the identity map (here $\varphi_y(x):=\varphi (x,y))$; 

(ii) for each finite simple non-solvable group $G$ from the above
list, there exists $y\in G$ such that the self-map $\varphi_y\colon
G\to G$ has a non-identity periodic point. For the $PSL(2)$ case,
this fits into the approach described in \secref{intro}: we consider
the dynamical system $(PSL(2,\BZ),\{1\},\varphi_y)$ and all its
reductions. (Note that in our context, the difference between $SL$ and $PSL$ 
is negligible, see Remark  \ref{SL-PSL} below.)

In order to satisfy condition (i), one has to impose some
restrictions on $\varphi$. We shall discuss this matter in Section
\ref{final}.

In the sequel, we shall consider two separate cases: $G=PSL(2,q)$
and $G=Sz(q)$ (the case of the single group $G=PSL(3,3)$ is usually
easy to handle). In each case we will show that the corresponding
dynamical system $D$ gives rise to a dynamical system $D'$ in the
space of traces (the  trace map) as in diagram \eqref{ddd2}. The
trace map has special geometry: the set of its fixed points (or of
periodic points of bounded period) has positive dimension. This
allows us to formulate a geometric sufficient condition on $\vp$ in
order to get a periodic point in every reduction. (See
\secref{final} where we dare formulate some general conjectures.)

Further on we denote by $\BA_{x_1,\dots ,x_n}^n$ the affine space with
coordinates $x_1,\dots ,x_n.$  %We denote $[x,y]=xyx^{-1}y^{-1}$. 

For brevity, we denote $\tG = SL(2,q)$.

We will repeatedly use expressions of the form ``a rational curve with 
$n$ punctures'' (even if our curve lies in an affine space) referring 
to an open subset of a projective curve of genus zero whose complement 
consists of $n$ points (e.g., the curve $xy=1$ in the affine plane will 
be referred to as a rational curve with two punctures).

\section {Case $G=PSL(2,q)$}\label{psl}

In this section we show how every word map gives rise to a dynamical
system. Then we  prove that this dynamical system may be included
into a commutative diagram of type \eqref{ddd2} (namely, diagrams
\eqref{d4} and \eqref{d6} below). The idea is that it is sufficient to
look for periodic points of the trace map $\psi.$ Indeed, if a point
$a$ is $\psi$-periodic, then all the points in the fibre over $a$
are $\vp$-periodic. The problem is to show that this fibre is not
empty. We first show how to construct the trace map, then we show
that the projection is a surjective morphism for every reduction
(Theorems \ref{rational} and \ref{surj}). Specific geometry of the  trace
map allows us to give sufficient conditions for the correewponding dynamical 
system to be residually periodic (Theorems \ref{condition} and \ref{s3vm}).

\medskip

%\noindent
Our method is based on the following classical fact (\cite{Vo},
\cite{Fr}, \cite{FK}, \cite{Ma1}) cited here from the paper \cite{Ho} (see also
\cite{Ma2}, \cite{Go} for a nice modern exposition of these results).

\begin{Theorem}\label{Hor}
Let $F=\left<a_1, \dots, a_n\right>$ denote the free group on $n$
generators. Let us embed $F$ into $SL(2,\BZ)$ and denote by $\tr$
the trace character. If $u$ is an arbitrary element of $F$, then the
character of $u$ can be expressed as a polynomial
$$
\tr(u)=P(t_1, \dots, t_n, t_{12},\dots ,t_{12\dots n})
$$
with integer coefficients in the $2^{n}-1$ characters
$t_{i_{1}i_{2}\dots i_{\nu}}=\tr(a_{i_{1}}a_{i_{2}}\dots
a_{i_{\nu}})$, $1\le\nu\le n, $ $1\le i_{1}<i_{2}< \dots <i_{\nu}\le
n.$ \qed
\end{Theorem}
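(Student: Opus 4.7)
The plan is to induct on a suitable complexity measure of the word $u$, using two fundamental consequences of the Cayley--Hamilton theorem for $SL(2)$. Every $A\in SL(2)$ satisfies $A+A^{-1}=\tr(A)\cdot I$; taking traces gives $\tr(A^{-1})=\tr(A)$, and applying the same identity to $B$ after multiplying on the left by $A$ and tracing yields the \emph{fundamental trace identity}
$$\tr(AB)+\tr(AB^{-1})=\tr(A)\tr(B).$$
A parallel but longer calculation, expanding $BC+CB=(B+C)^2-B^2-C^2$ via Cayley--Hamilton for $B$, $C$, and $B+C$ and eliminating $\det(B+C)=2+\tr(B^{-1}C)$ using the fundamental identity, yields the \emph{swap identity}
$$\tr(ABC)+\tr(ACB)=\tr(A)\tr(BC)+\tr(B)\tr(AC)+\tr(C)\tr(AB)-\tr(A)\tr(B)\tr(C).$$
Both identities have integer coefficients.

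Next I would eliminate inverses from $u$. If $a_i^{-1}$ occurs in $u$, cyclic invariance of the trace allows me to write $\tr(u)=\tr(Va_i^{-1})$ for some subword $V$, and the fundamental identity rewrites this as $\tr(V)\tr(a_i)-\tr(Va_i)$, strictly reducing the total number of inverse letters. Iterating expresses $\tr(u)$ as an integer polynomial in traces of positive words (products in the $a_i$ with no inverse letters).

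I would then reduce positive words to those in which no generator is repeated. If a positive word contains $a_i$ at least twice, cyclically rotate it into the form $a_iVa_iW$ and apply the fundamental identity with $X=a_iV$, $Y=a_iW$:
$$\tr(a_iVa_iW)=\tr(a_iV)\tr(a_iW)-\tr(VW^{-1}),$$
the last term obtained via cyclic invariance. The two positive traces on the right are strictly shorter than the one on the left, and any inverses appearing in $\tr(VW^{-1})$ can be cleared by the previous step. Finally, for positive words in distinct generators, the swap identity, rearranged as $\tr(Pa_ja_i)=(\mathrm{RHS})-\tr(Pa_ia_j)$, interchanges adjacent out-of-order letters at the cost of traces of strictly shorter words; a bubble sort then brings the index sequence into the required strictly increasing order, yielding the desired monomials in the $t_{i_1\dots i_\nu}$.

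The main obstacle is verifying that this procedure is well-founded, since each rewrite introduces several new traces of various kinds. The lexicographic complexity $(\text{length},\ \text{number of inverses},\ \text{number of inversions among the indices})$ strictly decreases under every rewrite above, guaranteeing termination in finitely many steps. Since both identities used are integral and cyclic invariance preserves coefficients, the resulting polynomial $P$ has integer coefficients, as required.
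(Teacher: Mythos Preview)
The paper does not actually prove this theorem: it is stated as a classical fact with references to Vogt, Fricke, Fricke--Klein, Magnus and Horowitz, and closed immediately with a \qed. Your argument is correct and is essentially the classical proof one finds in those references (especially \cite{Ho} and the expositions \cite{Ma2}, \cite{Go}): Cayley--Hamilton yields the two identities you wrote down, and one then successively eliminates inverses, eliminates repeated generators, and sorts the remaining distinct letters.

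Your termination argument is sound, but let me make one point explicit since you flagged it as the ``main obstacle.'' The third coordinate (number of inversions) is only meaningful once you have a positive word in distinct generators and have \emph{fixed a linear representative} of the cyclic word; you then perform the swap on that representative. Concretely, to swap an interior pair in $w=Pa_ja_iQ$ you rotate to $\tr((QP)a_ja_i)$, apply the swap identity, and observe that the surviving long term $\tr((QP)a_ia_j)$ is exactly $\tr(Pa_ia_jQ)$, i.e.\ the trace of $w$ with the adjacent pair transposed; so the inversion count of that fixed linear representative drops by one, while every other term on the right is strictly shorter. With this reading your lexicographic measure $(\text{length},\#\text{inverses},\#\text{inversions})$ indeed strictly decreases at every step.
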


Note that the theorem remains true for the group $\tG =SL(2,q)$
(and, more generally, for $SL(2,R)$ where $R$ is any commutative
ring, see \cite{CMS}).

We shall use this theorem in two different situations: for maps
arising from formulas of type \eqref{eq:BWW-it}, called two-variable
maps, and for those arising from formulas of type
\eqref{eq:ours-it}, called three-variable maps. These situations
will be described in the next two subsections respectively.

\subsection{Two-variable maps} \label{2vm}
In this section  we %treat a group   $\tG$ mostly as   an affine variety.
focus on the underlying affine algebraic variety of the algebraic group $\tG$. 
Consider a  morphism  $\vp\colon \tG\times \tG\to \tG$ satisfying the
property (needed for descending to $G=PSL(2)$): 
%\bigskip
%\begin{property}\label {seq}
$$
\vp(\pm x,\pm y)=\pm\vp(x,y).
$$
For example, any word map provides such a morphism. Namely, 
for any $x, y\in \tG$ denote $s=\tr(x)$, $t=\tr(y)$, and
$u=\tr(xy)$, and define a morphism $\pi\colon \tG\times \tG\to
\BA_{s,u,t}^3$ by
$$\pi(x,y):=(s,u,t).$$
Then in view of \thmref{Hor} there exists a map $\psi \colon
\BA_{s,u,t}^3\to \BA_{s,u,t}^3$ such that
\begin{equation}\label{cc}
\psi(\pi(x,y))=\pi(\vp(x,y),y).
\end{equation}
This map is called  a ``trace map" and is widely used (see, e.g., 
\cite{Pe2}).

Define $\tilde\vp =(\vp,  id)\colon \tG\times \tG\to \tG\times \tG$
by $\tilde\vp(x,y)=(\vp(x,y),y)$. Then the following diagram
commutes:
\begin{equation}
\begin{CD}
\tG \times \tG  @>{\tilde\vp}>> \tG \times \tG   \\
@V\pi VV   @V\pi VV\\
\BA^3_{s,u,t} @>{\psi}>> \BA^3_{s,u,t} \label{d4}
\end{CD}
\end{equation}

Here $\psi(s,u,t):=(f_1(s,u,t),f_2(s,u,t),t)$, where
$f_1(s,u,t)=\tr(\vp(x,y)),$ $f_2(s,u,t)=\tr(\vp(x,y)y).$

%%%%%%%%%%%%%%%%%%%%%%%%%%%%%%%%%%%%%%%%%%%%%%%%%%%%%%%%%%%%%%%%%%%%%%

%%%%%%%%%%%%%%%%%%%%%%%%%%%%%%%%%%%%%%%%%%%%%%%%%%%%%%%%%%%%%%%%%%%%%%%%%
\begin{Lemma}\label{positive}
For any word map $\vp(x,y)$
 the variety
$$\Phi : \{f_1(s,u,t)=s, f_2(s,u,t)=u\}\subset\BA^3_{s,u,t}$$
of fixed points of $\psi$ has positive dimension.\end{Lemma}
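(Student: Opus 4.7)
The plan is to reduce $\Phi$ to the vanishing locus of only two polynomial equations in $\BA^3_{s,u,t}$, exhibit a single explicit point of $\Phi$, and then invoke Krull's Hauptidealsatz. The geometric heart of the matter, and the source of the ``dimension jump'' alluded to in the introduction, is the fact that $\psi$ acts trivially on the third coordinate $t$.

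More precisely, since $\tilde\vp(x,y) = (\vp(x,y), y)$ leaves the second factor $y$ unchanged, the third component of $\psi$ must be the identity: $\psi(s,u,t) = (f_1(s,u,t), f_2(s,u,t), t)$. Consequently, the fixed-point system defining $\Phi$ reduces to only two genuine equations, $f_1(s,u,t) - s = 0$ and $f_2(s,u,t) - u = 0$, cut out inside the three-dimensional affine space $\BA^3_{s,u,t}$.

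To see that $\Phi$ is non-empty, note that since $\vp$ is a word map (a finite product of the variables and their inverses with respect to the group law), we have $\vp(1,1) = 1$. Hence $(1,1) \in \tG \times \tG$ is a fixed point of $\tilde\vp$, and the commutativity of diagram \eqref{d4} then yields
$$
\psi(\pi(1,1)) \;=\; \pi(\tilde\vp(1,1)) \;=\; \pi(1,1) \;=\; (2,2,2),
$$
so $(2,2,2) \in \Phi$.

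Finally, I would apply Krull's Hauptidealsatz: the vanishing locus of two polynomials in the irreducible variety $\BA^3_{s,u,t}$, if non-empty, has every irreducible component of codimension at most two, and therefore of dimension at least one. This gives $\dim \Phi \geq 1$, as required. I anticipate no serious obstacle: the substantive content of the lemma is concentrated entirely in the observation that $t$ is preserved by $\psi$, while the non-emptiness check and the Hauptidealsatz step are routine.
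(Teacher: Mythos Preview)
Your proof is correct and follows essentially the same approach as the paper: the paper's argument is simply that $\Phi$ is cut out by two equations in $\BA^3_{s,u,t}$ and contains the point $(2,2,2)$ (since $\omega(1,1)=1$ for any word $\omega$), hence has positive dimension. Your version is more explicit about why only two equations are needed and names Krull's Hauptidealsatz, but the substance is identical.
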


\begin{proof}%[Proof of \lemref{positive}]
Since the variety $\Phi $ is defined
by two equations in $\BA^3_{s,u,t},$
it is sufficient to show that it is not empty.
But for any word $\om(x,y)$
we have: $\om(1,1)=1,$ thus $\psi(2,2,2)=(2,2,2),$
hence $\Phi\neq\emptyset.$\end{proof}

\bigskip

\begin{Lemma}\label{fix}
Let $Q=(s_0, u_0,t_0)$ be a fixed point of $\psi$ defined over
$\BF_q$. Let $(x,y)\in\pi^{-1}(Q)$. Then
$(\vp(x,y),y)\in\pi^{-1}(Q)$ as well.
\end{Lemma}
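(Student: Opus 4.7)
The statement is a direct corollary of the commutativity of diagram \eqref{d4} together with the fixed‑point hypothesis on $Q$, so the plan is a one‑step diagram chase rather than anything requiring new machinery.

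First I would unwind the definitions. The assumption $(x,y)\in\pi^{-1}(Q)$ means exactly that
\[
\pi(x,y)=(s_0,u_0,t_0)=Q,
\]
and the assumption that $Q$ is fixed by $\psi$ means $\psi(Q)=Q$. Applying $\pi$ to the point $\tilde\vp(x,y)=(\vp(x,y),y)$ and using the commutativity of \eqref{d4}, I would write
\[
\pi(\vp(x,y),y)=\pi\bigl(\tilde\vp(x,y)\bigr)=\psi\bigl(\pi(x,y)\bigr)=\psi(Q)=Q,
\]
which is exactly the statement that $(\vp(x,y),y)\in\pi^{-1}(Q)$.

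The only point worth flagging is that the commutativity of \eqref{d4} as written in the paper is the identity \eqref{cc}, namely $\psi(\pi(x,y))=\pi(\vp(x,y),y)$; this is the content of \thmref{Hor} applied to the words $\vp(x,y)$ and $\vp(x,y)\cdot y$, which express $\tr(\vp(x,y))$ and $\tr(\vp(x,y)y)$ as polynomials in $s,u,t$. So no additional obstacle arises: the lemma is a purely formal consequence of the construction, and the "main obstacle" is simply remembering that the third coordinate of $\psi$ is the identity in $t$, so that the $y$-component of $\tilde\vp(x,y)$ maps correctly to $t_0$.
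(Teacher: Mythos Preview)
Your argument is correct and is exactly the paper's approach: the paper's proof is the one-line ``Indeed, \eqref{cc} gives $\pi(\vp(x,y),y)=\psi(Q)=Q$,'' which is precisely the diagram chase you wrote out in full. Your additional remark about the third coordinate being the identity in $t$ is true but not needed, since it is already built into the definition of $\psi$ and hence into \eqref{cc}.
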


\begin{proof}%[Proof of \lemref{fix}]
Indeed, \eqref{cc} gives $ \pi (\vp(x,y),y)=\psi(Q)=Q.$
\end{proof}

\begin{Theorem}\label{rational}
For every $\BF_q$-rational
point $Q=(s_0, u_0,t_0)\in \BA_{s,u,t}^3$ the fibre   
$H=\pi^{-1}(Q)$ has an $\BF_q$-rational point. %=\{(x,y)\in G\times G:tr (x)=s_0, tr(xy)=u_0, tr(y)=t_0\}.$$
%Then for  $q>3$ we have $H(\BF_q)\ne\emptyset.$
\end{Theorem}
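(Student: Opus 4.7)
The plan is to reduce the existence of an $\BF_q$-rational point in the fiber $H = \pi^{-1}(Q)$ to a point-counting question on an auxiliary affine conic. First I normalize: given $s_0 \in \BF_q$, take the companion matrix $x_0 = \left(\begin{smallmatrix} s_0 & -1 \\ 1 & 0 \end{smallmatrix}\right) \in SL(2,\BF_q)$, so that $\tr(x_0) = s_0$. Any $y \in SL(2,\BF_q)$ with $\tr(y) = t_0$ can be written $y = \left(\begin{smallmatrix} a & b \\ c & t_0-a \end{smallmatrix}\right)$ subject to $a(t_0-a) - bc = 1$. The remaining trace constraint $\tr(x_0 y) = u_0$ amounts to the linear equation $b - c + s_0 a = u_0$, and eliminating $b = c + u_0 - s_0 a$ yields the affine conic
\begin{equation*}
C\colon\ c^2 + (u_0 - s_0 a)\, c + a^2 - a t_0 + 1 = 0 \qquad \text{in }\BA^2_{a,c},
\end{equation*}
whose $\BF_q$-points are in bijection with admissible pairs $(x_0, y)$.

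The key computation would be to pass to the projective closure $\overline{C} \subset \BP^2$ and check that the determinant of the matrix of the associated ternary quadratic form equals $(4 - s_0^2 - u_0^2 - t_0^2 + s_0 u_0 t_0)/4$, i.e.\ $(2 - \tr[x,y])/4$ by the Fricke--Vogt identity. In characteristic $\ne 2$, if this quantity is nonzero then $\overline{C}$ is a smooth projective conic and, by Chevalley--Warning, has exactly $q+1$ $\BF_q$-points. Its points at infinity satisfy $a^2 - s_0 ac + c^2 = 0$ and number at most two, so at least $q - 1 \ge 1$ affine $\BF_q$-points remain.

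When the Gram determinant vanishes, $\overline{C}$ is singular; if moreover $s_0 \ne \pm 2$, the singular point is the unique solution of the $2\times 2$ linear system $\partial F/\partial a = \partial F/\partial c = 0$ (with nonzero determinant $4-s_0^2$), which automatically lies in the affine chart and is $\BF_q$-rational. The remaining situation $s_0 = \pm 2$ combined with conic singularity forces $u_0 = (s_0/2)\,t_0$, since the Fricke expression then factors as $-(u_0 - (s_0/2)t_0)^2$; here I would abandon the companion matrix and take $x = \pm I$ instead, rendering $\tr(xy) = \pm\tr(y) = u_0$ automatic, so that any $y \in SL(2,\BF_q)$ with $\tr(y) = t_0$ (say, a companion matrix of $T^2 - t_0 T + 1$) closes the argument.

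The principal obstacle is precisely this bookkeeping of degenerate configurations---most dangerously, singular conics whose lone $\BF_q$-point would sit at infinity---defused by the substitution $x = \pm I$ in the narrow window where it can occur. Characteristic $2$ demands a separate analysis, since the discriminant formalism breaks down; one may substitute Artin--Schreier solvability of the reduced quadratic $c^2 + Mc + N = 0$, or appeal more conceptually to $H^1(\BF_q, PGL_2) = \mathrm{Br}(\BF_q)[2] = 0$ (by Wedderburn) to trivialize the generic $PGL_2$-torsor structure borne by the fibers of $\pi$ and thereby produce an $\BF_q$-point directly.
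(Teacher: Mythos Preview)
Your reduction is exactly the paper's: normalize $x$ to a companion matrix with trace $s_0$, write $y$ with generic entries, and reduce to finding an $\BF_q$-point on the resulting affine conic in two of those entries. The paper even remarks that Chevalley--Warning suffices before giving an elementary diagonalize-and-count-squares argument instead. Your odd-characteristic analysis via the Gram determinant, projective point count, and location of the singular point is correct; it is pleasant that your Gram determinant $(4-s_0^2-t_0^2-u_0^2+s_0t_0u_0)/4$ vanishes exactly on the reducible locus $\tr[x,y]=2$, so your smooth/singular dichotomy is the irreducible/reducible one. The paper, incidentally, handles $s_0=\pm 2$ in one stroke by writing down an explicit pair rather than splitting further on singularity.

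The gap is characteristic $2$: you name two routes but carry out neither. The Artin--Schreier route does work and is what the paper actually executes---if $s_0=0$ one exhibits an explicit pair, and if $s_0\ne 0$ a linear change of variables brings the conic to $\tilde a^2+\tilde c^2+s_0\tilde a\tilde c=r$, whereupon $(\tilde a,\tilde c)=(\sqrt r,0)$ works since every element of $\BF_{2^n}$ is a square. Your cohomological alternative, however, is not correct as stated: the fibers of $\pi$ are $PGL_2$-torsors only over the absolutely irreducible locus, which by the previous paragraph is precisely where your Gram determinant is nonzero. Over its vanishing locus the stabilizer jumps and the fiber is a union of several orbits of varying dimension, so $H^1(\BF_q,PGL_2)=0$ alone does not hand you a rational point there; you would still owe a direct argument on that degenerate stratum, at which point the conic computation is no harder than in odd characteristic.
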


\begin{proof}%[Proof of \propref{rational}]
We will look for an element of 
$H$ among pairs of matrices of the form 
\begin{equation}  
\left(
\begin{pmatrix}
0 & 1 \\
-1 & s_0
\end{pmatrix} , 
\begin{pmatrix}
a & b \\
c & -a+t_0
\end{pmatrix}
\right).
\label{pairs}
\end{equation}
To lie in $H$, the entries of these matrices must satisfy the equations
$$
a(-a+t_0)-bc=1, \quad c-b+s_0(-a+t_0)=u_0.
$$
On eliminating $b$, we arrive at the following equation in $a$ and $c$: 
\begin{equation} 
a^2+c^2-s_0ac-t_0a +(s_0t_0-u_0)c+1=0, 
\label{ac}
\end{equation}
which has a solution for every $q$. Of course, this can be proved using 
the Chevalley--Warning theorem, but for the reader's convenience we 
present here an elementary proof. 

{\bf Case 1. $q$ is odd.} 

The discriminant $D$ of the quadratic part of the left-hand side of \eqref{ac} 
equals $s^2_0-4$. If $D=0$, i.e. $s_0=\pm 2$, we exhibit an explicit point 
in $H$: 
\begin{equation}
\left( \begin{pmatrix}\pm 1 & u_0\mp t_0\\0 &\pm 1\end{pmatrix}, 
\begin{pmatrix}1 & t_0-2\\ 1 & t_0-1\end{pmatrix}\right),
\label{D=0}
\end{equation}
so we may assume $D\ne 0$. 
First, by a linear change of variables over $\BF_q$, let us bring \eqref{ac} to the form 
$$
\tilde{a}^2+\varepsilon\tilde{c}^2=r.
$$ 
If $r$ is a square, $r=v^2$, we can put $\tilde a=v$, $\tilde c=0$, 
so we may assume that $r$ is not a square. If $\varepsilon$ is not a square, then 
$r/\varepsilon$ is a square, $r/\varepsilon=v^2$, and we can put $\tilde a=0$, $\tilde c=v$, 
so we may assume $\varepsilon$ is a square, $\varepsilon =v^2$. In $\BF_q$ there are 
$(q+1)/2$ squares and $(q-1)/2$ nonsquares, thus among $(q+1)/2$ elements $r-\tilde a^2$, when $\tilde a$ ranges over 
$\BF_q$, there is a square $w^2$. We then put $\tilde c=w/v$. 

{\bf Case 2. $q$ is even}. 

If $s_0=0$, then we get a point in $H$ from \eqref{D=0}, so we may assume $s_0\ne 0$. 
Then on putting $\tilde a=a+(s_0t_0+u_0)/s_0$, $\tilde c=c+t_0/s_0$, we bring \eqref{ac} to the form 
$$\tilde a^2+\tilde c^2+s_0\tilde a\tilde c=r.$$
As every element of $\BF_q$ is a square, we have $r=v^2$ and we can put $\tilde a=v$, $\tilde c=0$. 
\end{proof}

%\bigskip
 \begin{cor}\label{con}  Consider the following ``conjugation"
equivalence relation $\sim$ on $SL(2,\BF_q)^2$:
$$(x,y)\sim (x',y') \ \text{iff} \ \exists g\in SL(2,\ov\BF_q) \ | \ x'=gxg^{-1}, \
y'=gyg^{-1}.$$
  Then every absolutely irreducible component of the set of
  conjugacy classes of $\tilde\vp$-periodic points   is positive dimensional.
 \end{cor}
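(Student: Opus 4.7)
The plan is to transfer the question to the trace space $\BA^3_{s,u,t}$ via the commutative diagram \eqref{d4}. Since $\vp$ is a word map, $\tilde\vp$ is equivariant under simultaneous conjugation by $SL(2,\ov\BF_q)$ and therefore descends to a self-map of the space of conjugacy classes. By the Fricke--Klein--Vogt theorem (the substance of \thmref{Hor}), the projection $\pi$ generically identifies conjugacy classes with points of $\BA^3_{s,u,t}$: on the open subset where $x,y$ generate an absolutely irreducible subgroup, $\pi^{-1}(Q)$ is a single conjugacy class for every $Q$. Under this identification the descended self-map is precisely the trace map $\psi$.

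Next, fix $n$ and consider the $\psi^{(n)}$-fixed-point variety
\[
\Phi_n := \{Q \in \BA^3_{s,u,t} : \psi^{(n)}(Q) = Q\}.
\]
The crucial observation is that, as noted right after diagram \eqref{d4}, $\psi$ preserves the coordinate $t$; hence so does $\psi^{(n)}$, and $\Phi_n$ is cut out in $\BA^3$ by only the \emph{two} polynomial equations $f_1^{(n)}(s,u,t) = s$ and $f_2^{(n)}(s,u,t) = u$. Krull's Hauptidealsatz therefore forces every irreducible component of $\Phi_n$ to have dimension at least $3-2 = 1$. In particular, this refines \lemref{positive} from the case $n=1$ to all iterates.

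To conclude, let $W$ be an absolutely irreducible component of the set of conjugacy classes of $\tilde\vp$-periodic points, and let $n$ be such that a generic class of $W$ is $\tilde\vp^{(n)}$-fixed. On the generic locus, the restriction of $\pi$ realizes $W$ as birationally equivalent to a component of $\Phi_n$, while \thmref{rational} guarantees that $\pi$ is surjective, so every component of $\Phi_n$ is the image of at least one such $W$. Combining with the preceding paragraph gives $\dim W \geq \dim \pi(W) \geq 1$, as claimed.

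The main obstacle is to handle components $W$ that might lie entirely in the \emph{non-generic} locus, i.e.\ parametrizing pairs $(x,y)$ that share a common eigenvector, where the trace map no longer separates conjugacy classes. I would treat these by restricting $\tilde\vp$ to the (conjugation-invariant) reducible locus, which is itself a positive-dimensional subvariety of the conjugacy-class space, and applying the same Hauptidealsatz-type count to the corresponding restriction of $\psi$; in each case the set of periodic classes is again the zero locus of strictly fewer equations than the ambient dimension, precluding isolated components.
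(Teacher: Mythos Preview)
Your approach is essentially the same as the paper's: identify the space of conjugacy classes with $\BA^3_{s,u,t}$ via the trace projection $\pi$, then use that $\psi$ (and hence every iterate $\psi^{(n)}$) fixes the $t$-coordinate, so the fixed locus $\Phi_n$ is cut out by only two equations in $\BA^3$ and therefore has no zero-dimensional components. The paper's own proof is extremely terse---just two sentences invoking the identification $(SL(2,\BF_q)^2\setminus V(\BF_q))/_{\sim}\cong\BF_q^3$ and asserting that the periodic set of the trace map is positive dimensional---so your write-up actually supplies the extension from \lemref{positive} (the case $n=1$) to arbitrary iterates that the paper leaves implicit.

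Your final paragraph on the non-generic (reducible) locus goes beyond what the paper provides: the paper simply excises a subset $V$ before making the identification with $\BA^3$, effectively sidestepping the issue rather than treating it separately. Your instinct to worry about components lying entirely in the reducible locus is reasonable, but note that the cleanest resolution is that the GIT quotient $SL(2)^2/\!/SL(2)$ is \emph{exactly} $\BA^3$ (this is the content of the Fricke--Vogt theorem), so the trace map already realizes the categorical quotient and no separate argument for the reducible locus is needed once one interprets ``conjugacy classes'' in the GIT sense.
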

\begin{proof}
  Indeed  $(SL(2,\BF_q)^2\setminus V( \BF_{q})) /_{\sim}$
  can be identified with
$\BF_{q}^3.$
The corollary is valid, because
 the periodic set of the trace map is positive dimensional.\end{proof}

%\bigskip

We can now obtain a sufficient condition for the existence of
periodic points. Consider the  maps $\vp\colon \tG\times \tG \to
\tG$ and $\psi\colon \BA_{s,u,t}^3\to\BA_{s,u,t}^3$ as in diagram
(\ref{d4}), and denote by
$\Phi\subset \BA_{s,u,t}^3$ the variety of fixed points of $\psi$.
As in Section \ref{np}, for a fixed $y$ denote by $\vp_y\colon
\tG\to\tG$ the map $x\mapsto \vp (x,y)$.

Note that $\Phi$ contains a line $$L_1=\{s=2, u=t\}.$$ Since $\Phi$
is a complete intersection, all its irreducible components have
dimension at least one.
%\bigskip

\begin{Theorem}\label{condition}
Write $\Phi=\cupl_{i=1}^k W_i\cup L_1,$
where $W_i$ are irreducible $\BF_q$-components of $\Phi.$
Suppose $q$ is big enough. If at least one of $W_i$'s is absolutely irreducible,
then there exists a pair
$(x,y)\in G\times G$ such that $x\ne 1, y\ne 1$ and $x$
is a periodic point of $\vp_y.$
\end{Theorem}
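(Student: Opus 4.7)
The plan is to combine \thmref{rational} (surjectivity of $\pi$ on $\BF_q$-points) with a Lang--Weil count on an absolutely irreducible component $W$ of $\Phi$, and then to promote a fixed point of the trace map $\psi$ to a periodic point of $\vp_y$ using \lemref{fix} together with the finiteness of $\tG(\BF_q)$. The first step is to isolate the ``trivial'' lines in $\BA_{s,u,t}^3$ over which the fibre of $\pi$ is forced to contain a lift with a central coordinate. Besides $L_1=\{s=2,\,u=t\}$ (where some lift has $x=I$), a direct check shows that $x=-I$, $y=I$, $y=-I$ force $\pi(x,y)$ to lie respectively on $L_1'=\{s=-2,\,u=-t\}$, $L_2=\{t=2,\,u=s\}$, $L_2'=\{t=-2,\,u=-s\}$. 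The absolutely irreducible component $W=W_i$ supplied by the hypothesis is different from $L_1$; I would separately verify that $W$ is also different from $L_1',L_2,L_2'$ (either by observing that $\psi$ does not preserve those lines identically for a nontrivial word map, or by treating them as further discardable trivial components in parallel with $L_1$).

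Since $\Phi$ is cut out by two equations in $\BA^3$, every component has dimension at least one, so $\dim W\geq 1$. By the Lang--Weil estimate (or Weil's bound when $\dim W=1$),
$$|W(\BF_q)|=q^{\dim W}+O(q^{\dim W-1/2}),$$
whereas each $W\cap L_\ast$ is a proper closed subvariety of $W$, hence of strictly smaller dimension, and contributes only a lower-order $\BF_q$-count. Therefore, for $q$ large one can choose $Q\in W(\BF_q)\setminus(L_1\cup L_1'\cup L_2\cup L_2')$ and apply \thmref{rational} to obtain a rational lift $(x,y)\in\pi^{-1}(Q)\cap \tG(\BF_q)^2$. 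Because $Q$ avoids all four trivial lines, every element of $\pi^{-1}(Q)$ has both coordinates different from $\pm I$, so in particular $x,y\neq 1$ in $G$. By \lemref{fix}, $\tilde\vp$ preserves $\pi^{-1}(Q)$, and since it fixes the second coordinate it restricts to the self-map $x\mapsto \vp_y(x)$ of the finite slice $\pi^{-1}(Q)\cap(\tG(\BF_q)\times\{y\})$. Any self-map of a finite set has a periodic point in each forward orbit, so the orbit $x,\vp_y(x),\vp_y^{(2)}(x),\dots$ reaches a periodic $x^*$ with $(x^*,y)\in\pi^{-1}(Q)$; in particular $x^*\neq 1$ in $G$, so $(x^*,y)$ is the pair sought.

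The principal technical obstacle is the Lang--Weil input: absolute irreducibility of $W$ is indispensable, since a merely $\BF_q$-irreducible variety whose geometric components are Galois-conjugate can have as few as $O(1)$ rational points, all potentially concentrated on the trivial lines. A secondary bookkeeping point is ensuring that the chosen $W$ is distinct from all four trivial lines and not only from $L_1$, which should either be read into the hypothesis or verified case-by-case for the word map at hand.
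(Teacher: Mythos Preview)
Your argument is correct and follows essentially the same route as the paper: Lang--Weil on an absolutely irreducible component $W\neq L_1$ to produce a rational $Q$ off the trivial locus, \thmref{rational} to lift $Q$ to $(x,y)\in\tG(\BF_q)^2$, \lemref{fix} to see that the $\vp_y$-orbit of $x$ stays in the finite fibre over $Q$, and then the pigeonhole step to extract a periodic $\tilde x$. The paper's proof is nearly identical but slightly terser: it only explicitly excludes the locus $\{(\pm 2,\pm t,t)\}=L_1\cup L_1'$ (securing $\tilde x\neq 1$ in $G$), whereas you also rule out $L_2,L_2'$ to nail down $y\neq 1$; your extra bookkeeping is harmless and arguably tidier, and your remark that these extra lines meet $W$ in a lower-dimensional set (hence are swallowed by the Lang--Weil error term) is exactly the right way to handle it.
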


\begin{proof}%[Proof of \thmref{condition}]
Let $W_i$ be an absolutely irreducible component of $W$, $W\ne L_1$.
By the Lang--Weil theorem \cite{LW}, there is a point
$Q=(s_0,u_0,t_0)\ne(\pm 2,t,\pm t)\in W_i(\BF_q).$ According to
\thmref{rational}, we have $H_Q(\BF_q)\ne\emptyset,$ where
$H_Q=\pi^{-1}(Q).$ It follows that there exists a pair $(x,y)\in
\tG\times \tG$ such that $s_0=\tr(x)$, $u_0=\tr(xy)$, $t_0=\tr(y).$
By \lemref{fix},  $(\vp_y(x),y)\in H_Q(\BF_q)$ as well. Since the
set $H_Q(\BF_q)$ is finite, there are numbers $n<m\in\BN$ such that
$\vp_y^{(m)}(x)=\vp_y^{(n)}(x).$ Thus, $\tilde x=\vp_y^{(n)}(x)$ is
a periodic point of $\vp_y.$ Moreover, the image of $\tilde x$ in
$G=PSL(2,q)$ is non-identity since $Q=(s_0,u_0,t_0)\ne (\pm 2,t,\pm
t)$.
\end{proof}

\begin{Remark}
If there is a component $W_i\subset \Phi$ defined over $\BZ$ and
irreducible over $\ov\BQ,$ then, by
\cite [Theorem IV, 9, 7.7(i)]{Gr}, the assumptions of the theorem
are satisfied for any prime $p$ big enough.
\end{Remark}

\begin{remark} \label{rem:int}
Suppose $q=p>3$ is a prime number. Note that all the maps in diagram
(\ref{d4}) are defined over $\BZ$, and it can thus be viewed as the
special fibre at $p$ of the following diagram of morphisms of
$\BZ$-schemes (denoted by the same letters):

\begin{equation}
\begin{CD}
\CG \times \CG   @>{\tilde\vp}>>   \CG \times \CG   \\
@V\pi VV @V\pi VV \\
\BA^3_{\BZ} @>{\psi}>> \BA^3_{\BZ}
\end{CD}
\label{d4Z}
\end{equation}
where $\CG=SL(2,\BZ )$.
\end{remark}

\subsection{Three-variable maps} \label{3vm}

Let here $\tG$ denote $SL(2,K)$ where $K$ is an arbitrary field. 
Consider a  morphism $\vp\colon \tG\times \tG\times \tG\to \tG$ such that
$$\vp(\pm x,\pm u,\pm y)=\pm\vp(x,u,y).$$ The modified map $\tilde
{\vp}\colon \tG \times \tG  \times \tG \to \tG\times \tG  \times \tG
$ is defined by $\tilde {\vp}(x,u,y)=(x,\vp(x,u,y),y)$.

As above, we consider a representation $\rho$ of the free group
$F_3$ in $SL(2,\BZ)$ and assume that $\varphi$ is defined by a word 
$w=w(x,u,y)$. The trace of $\rho (w)$
can be expressed as a polynomial in 7 variables $a_1=\tr(x)$,
$a_2=\tr(y)$, $a_3=\tr(u),$ $a_{12}=\tr(xy)$, $a_{13}=\tr(xu),$
$a_{23}=\tr(yu)$, $a_{123}=\tr(xyu)$. These variables are dependent
(see, e.g., \cite{Ma1} or  formulas (2.3)--(2.5) in \cite{Ho}):
\begin{equation}\label{hha1}
\begin{aligned}
a_{123}^2 & - a_{123}( a_{12} a_3+ a_{13} a_2+a_{23}
a_1-a_1a_2a_3)\\
& + (a_1^2+a_2^2+a_3^2+a_{12}^2+a_{13}^2+a_{23}^2 -a_1 a_2 a_{12}-
a_1 a_3 a_{13}-a_2 a_3 a_{23}+a_{12} a_{13}a_{23}-4) =0.
\end{aligned}
\end{equation}

Let $\ov{a}= (a_1, a_2,a_3, a_{12},a_{13},a_{23},a_{123})\in \BA^7,$
let $Z\subset \BA^7$ be an absolutely irreducible set  defined by \eqref{hha1}. Let
$\pi(x,u,y)=\ov{a}\in Z$  be the trace projection. Then the
following diagram is commutative:
\begin{equation}
\begin{CD}
\tG \times \tG  \times \tG   @>{\tilde\vp}>>  \tG \times \tG\times \tG  \\
@V\pi VV @V\pi VV \\
Z(K) @>{\psi} >> Z(K)
\end{CD}
\label{d6}
\end{equation}
where $\psi(\ov a)=(a_1, a_2 ,l_1(\ov a), a_{12}, l_2(\ov a),l_3(\ov
a),l_4(\ov a))$,
$$l_1=\tr(\vp(x,u,y)), \ l_2=\tr(\vp(x,u,y)x),$$
$$l_3=\tr(\vp(x,u,y)y), \  l_4=\tr(\vp(x,u,y)xy).$$
The variety $F(\vp)\subset Z$ of fixed points of $\psi$ is defined
by the equations
$$l_1(\ov a)=a_3, \ l_2(\ov a)=a_{13}, \ l_3(\ov a)=a_{23}, \ l_4(\ov a)=a_{123},$$
and, since it is nonempty, its dimension is at least $3$.

Let us now consider diagram (\ref{d6}) more carefully.

\begin{Lemma}\label{Zirred}
Let $F$ be any algebraically closed field. Then the set $Z$ is an irreducible hypersurface over $F$.  
\end{Lemma}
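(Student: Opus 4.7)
The plan is to prove that the polynomial $P \in F[a_1, a_2, a_3, a_{12}, a_{13}, a_{23}, a_{123}]$ on the left-hand side of \eqref{hha1} is irreducible over $F$. Since $P$ is nonconstant, this immediately gives that $Z = V(P)$ is an irreducible hypersurface in $\BA^7$.

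The key observation is that $P$ is monic of degree two when viewed as a polynomial in the single indeterminate $a_{123}$, with coefficients in the unique factorization domain $R := F[a_1, a_2, a_3, a_{12}, a_{13}, a_{23}]$. Because $P$ is monic, it is primitive as an element of $R[a_{123}]$, so by Gauss's lemma $P$ is irreducible in $F[a_1, \dots, a_{123}]$ if and only if it is irreducible in $R[a_{123}]$; and a monic quadratic over an integral domain is irreducible precisely when it has no root in that domain.

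To rule out a root of $P$ in $R$, I will specialize by setting $a_1 = a_2 = a_3 = 0$. The coefficient of $a_{123}$, namely $a_{12}a_3 + a_{13}a_2 + a_{23}a_1 - a_1a_2a_3$, specializes to zero, and $P$ reduces to
$$ P_0 \;=\; a_{123}^2 + C_0, \qquad C_0 \;:=\; a_{12}^2 + a_{13}^2 + a_{23}^2 + a_{12}a_{13}a_{23} - 4, $$
viewed in $F[a_{12}, a_{13}, a_{23}][a_{123}]$. If $f \in R$ were a root of $P$, then its specialization $f_0 \in F[a_{12}, a_{13}, a_{23}]$ would satisfy $f_0^2 = -C_0$, forcing $-C_0$ to be a square in $F[a_{12}, a_{13}, a_{23}]$. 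But $C_0$ has total degree three, owing to the monomial $a_{12}a_{13}a_{23}$, whereas every polynomial square has even total degree; this is the contradiction that proves irreducibility.

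The main obstacle is running the argument uniformly across all characteristics. The familiar reformulation ``the discriminant $B^2 - 4C$ is not a square in $R$'' works cleanly in characteristic $\neq 2$, but in characteristic $2$ the discriminant degenerates (it becomes $B^2$, hence automatically a square), and one must argue differently. The ``no root in $R$'' criterion combined with the degree-parity obstruction above bypasses this case split entirely.
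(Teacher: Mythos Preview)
Your proof is correct and takes a genuinely different, more elementary route than the paper. The paper argues geometrically: assuming $Z$ is reducible, it restricts the double cover $Z \to \BA^6$ (projection forgetting $a_{123}$) to a well-chosen irreducible curve $L \subset \BA^6$ not contained in the branch locus and shows that $Z \cap p^{-1}(L)$ is an irreducible branched double cover of $L$, contradicting reducibility; this requires separate choices of $L$ in characteristic $\neq 2$ and characteristic $2$. Your approach instead works directly with the defining polynomial: since $P$ is monic quadratic in $a_{123}$ over the integral domain $R$, reducibility would force a root $f \in R$, and specializing $a_1 = a_2 = a_3 = 0$ yields $f_0^2 = -C_0$ with $\deg C_0 = 3$ odd---an obstruction that is uniform across all characteristics because the degree-$3$ monomial $a_{12}a_{13}a_{23}$ has coefficient $1$. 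Your argument is shorter and avoids the case split entirely; the paper's has the virtue of making the geometric picture (a generically \'etale double cover with genuine ramification) explicit, which is consonant with how $Z$ is used later. One minor remark: the invocation of Gauss's lemma is harmless but superfluous, since $R[a_{123}]$ already \emph{equals} $F[a_1,\dots ,a_{123}]$, and the observation ``a monic quadratic over an integral domain is reducible iff it has a root there'' already does all the work.
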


\begin{proof} Assume the contrary. Let $p$ denote the  natural
projection of $\BA^7$   to $ \BA^6, $ forgetting the coordinate $a_{123}.$ Let $L\subset \BA^6$ be 
an irreducible curve not contained in the branch locus of the restriction of $p$ to $Z$. 
Then the set $p^{-1}(L)\bigcap Z$ is reducible. 

\medskip
{\bf Case 1.}  $char (F)\ne 2.$ 
 
Let $c\ne\pm 2$. Consider  the curves
$L=\{a_1=a_2=a_{13}=a_{23}=0, \, a_{12}=c \}\subset \BA^6$  and $M=p^{-1}(L)=\{a_1=a_2=a_{13}=a_{23}=0, \, a_{12}=c\}\subset \BA^7.$
Then from \eqref{hha1} it follows that $  M'=Z\bigcap M$  is defined by the following equations:
$$
(a_{123}-a_3c/2)^2-(c^2-4)(a_{3}^2-4)/4=0, \, 
 a_1=a_2=a_{13}=a_{23}=0,\, a_{12}=c.
$$
Therefore $M'$ is a branched double cover of $L,$ hence it is irreducible. Contradiction.
 
 \medskip
  {\bf Case 2.}  $char (F) =2.$ 
  We now consider the curve $L=\{a_1=a_2=a_{13}=a_{23}=0, \, a_{12}=a_3+1 \}\subset \BA^6.$ 
 In the notation of Case 1, $M'$ is defined by the equations 
 $$a_{123}^2-a_3(a_3+1)a_{123}+1=0,\, 
 a_1=a_2=a_{13}=a_{23}=0,\, a_{12}=a_3+1.$$
  Thus it is irreducible. Contradiction. 

\medskip

Hence $Z$ is irreducible.   
\end{proof}

\begin{Theorem}\label{surj}
Let $Z\subset \BA^7_{a_1, a_2,a_3, a_{12},a_{13},a_{23},a_{123}}$ be
defined by equation \eqref{hha1}. Then for all  $q$ 
%the image of 
the map $\pi\colon SL(2,q)\times SL(2,q)\times SL(2,q)\to  Z(\BF_q)$ 
is surjective. 
%contains  $Y(\BF_q).$ This means that for any point $y\in Y(\BF_q)$ 
%the set $\pi^{-1}(y)\subset SL(2,q)^3$ is nonempty. 
\end{Theorem}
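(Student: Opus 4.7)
The plan is to extend the construction of Theorem \ref{rational} from two to three matrices. Given $\ov a = (a_1, a_2, a_3, a_{12}, a_{13}, a_{23}, a_{123}) \in Z(\BF_q)$, I would first apply Theorem \ref{rational} to produce $x, y \in SL(2, q)$ realizing the three traces $\tr(x) = a_1$, $\tr(y) = a_2$, $\tr(xy) = a_{12}$; then locate $u \in SL(2, q)$ realizing the remaining four traces by linear algebra.

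The key observation for the generic case is that when $\{I, x, y, xy\}$ forms an $\BF_q$-basis of $M_2(\BF_q)$ — equivalently, when $x, y$ share no common eigenvector over $\ov{\BF_q}$ — the non-degeneracy of the trace form $\langle A, B\rangle := \tr(AB)$ makes the $\BF_q$-linear map
\[
T\colon M_2(\BF_q)\to \BF_q^4,\qquad u \mapsto \bigl(\tr u,\ \tr(xu),\ \tr(yu),\ \tr(xyu)\bigr)
\]
into an isomorphism (its matrix in the basis $\{I, x, y, xy\}$ being the Gram matrix of this form). Inverting $T$ yields a unique candidate $u \in M_2(\BF_q)$ matching the last four traces, and it remains to verify $\det u = 1$. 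Since $u$ depends linearly on $(a_3, a_{13}, a_{23}, a_{123})$, the function $\det u - 1$ is a quadratic polynomial $Q$ in these four variables. Likewise \eqref{hha1}, viewed with $(a_1, a_2, a_{12})$ treated as fixed parameters, is a quadratic polynomial $F$ in $(a_3, a_{13}, a_{23}, a_{123})$, and both $Q$ and $F$ vanish on the smooth irreducible 3-dimensional quadric $T(SL(2))$ — the image under the isomorphism $T$ of the smooth hypersurface $\{\det=1\}$. Two irreducible quadrics vanishing on a common 3-dimensional hypersurface in $\BA^4$ must be proportional, so $\ov a \in Z$ (i.e.\ $F=0$) forces $\det u = 1$, placing $u$ in $SL(2, q)$.

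The main obstacle is the degenerate locus: sporadic values of $(a_1, a_2, a_{12})$ (for instance $(\pm 2, \pm 2, \pm 2)$) for which every pair in $SL(2, q)^2$ with the prescribed first three traces has $\{I, x, y, xy\}$ linearly dependent, because $x$ and $y$ are forced to share a common eigenvector. In such cases the Fricke polynomial \eqref{hha1} itself degenerates (e.g.\ it becomes a perfect square, linearly determining $a_{123}$ from the other six traces), and I would handle these by explicit construction in the style of the $D=0$ case of Theorem \ref{rational}: take $x, y$ upper triangular with suitably chosen diagonal and unipotent parts, write a general $u$ subject to $\det u = 1$, and solve the resulting (now underdetermined) linear system case by case, mirroring the case split ``$q$ odd / $q$ even'' of Theorem \ref{rational}.
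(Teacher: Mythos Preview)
Your generic-case argument is correct and genuinely different from the paper's. The paper fixes $B_3$ in companion form and parametrizes $B_1,B_2$ by four scalars $\alpha_1,\gamma_1,\alpha_2,\gamma_2$; the trace conditions become two independent quadratics $D_1=D_2=0$ plus a $2\times 2$ linear system $Ay=b$ in $(\alpha_1,\gamma_1)$ whose coefficient matrix satisfies $\det A = L_{23}$ modulo $D_2$, and the remaining determinant condition $D_1=0$ is forced by a polynomial identity in the ideal $(D_2,L)$ (checked by computer algebra). Your trace-form argument --- inverting $T\colon u\mapsto(\tr u,\tr xu,\tr yu,\tr xyu)$ and then observing that the two quadratics $\det(T^{-1}(\cdot))-1$ and \eqref{hha1} must be proportional since both cut out the irreducible quadric $T(SL(2))$ --- is cleaner and more conceptual, and avoids the machine verification of Lemma~\ref{qual}. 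Note, though, that your non-degeneracy condition is precisely $L_{12}(a_1,a_2,a_{12})\ne 0$ (this is exactly $\tr[x,y]\ne 2$, i.e.\ irreducibility of $\langle x,y\rangle$), and by permuting which pair you realize first you can extend the argument to the locus where at least one of $L_{12},L_{13},L_{23}$ is nonzero, matching the paper's Case~1.

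The gap is your treatment of the degenerate locus. The condition $L_{12}=0$ is not ``sporadic'': it is a two-dimensional hypersurface in $\BA^3_{a_1,a_2,a_{12}}$, and even after the permutation trick the residual locus $L_{12}=L_{13}=L_{23}=0$ is still a three-dimensional subvariety of $Z$, not a finite set. Your proposed ad hoc upper-triangular construction does not obviously cover it; for instance, when all three pairs are reducible but the triple is not simultaneously triangularizable (e.g.\ points with $a_1=a_2=a_3=0$, $a_{12}=a_{13}=a_{23}=\pm 2$), the underdetermined linear system need not intersect $\{\det=1\}$ over $\BF_q$ in any transparent way. The paper handles this case by a quite different mechanism: it uses the action of $\Aut(F_3)$ on $Z$ (Corollary~\ref{cor:repr}) to move a given point $x$ to one where some $L_{ij}\ne 0$, and reduces the failure of this to a finite explicit list of points (Proposition~\ref{prop:case2b}, a MAGMA computation) which are then lifted by hand. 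You would need either to supply this, or an alternative argument of comparable strength, to close the proof.
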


\begin{proof} 
%In this section we shall prove a surjectivity result for the trace map in
%three variables. 
The result will follow from identities
between certain polynomials in the polynomial ring
$$R:=\BZ[x_1, x_2,x_3, x_{12},x_{13},x_{23},x_{123},\a_1,\g_1,\a_2,\g_2].$$
%We define
Denote 
\begin{equation}\label{hhaf}
\begin{aligned}
L:= & x_{123}^2  - x_{123}( x_{12} x_3+ x_{13} x_2+x_{23}+
x_1-x_1x_2x_3)\\
&  +x_1^2+x_2^2+x_3^2+x_{12}^2+x_{13}^2+x_{23}^2 -x_1 x_2 x_{12}-
x_1 x_3 x_{13}-x_2 x_3 x_{23}+x_{12} x_{13}x_{23}-4,
\end{aligned}
\end{equation}
$$L_{12}:=x_1^2+x_2^2+x_{12}^2-x_1x_2x_{12}-4,\quad
L_{13}:=x_1^2+x_3^2+x_{13}^2-x_1x_3x_{13}-4,$$
$$L_{23}:=x_2^2+x_3^2+x_{23}^2-x_2x_3x_{23}-4$$
(all viewed as elements of $R$).

We start with the following lemma (skipping an elementary proof).  
\begin{lemma}\label{quale}
Let $K$ be a finite field, and let $r,\, s,\, t,\, a\in K$ be such that the equation in $x$, $y$ 
$$x^2+y^2+rxy+sx+ty=a$$
is not solvable in $K$. Then the characteristic of $K$ is 
$2$ and $r=0$, $s=t$ hold. \qed
\end{lemma}

We now define two more polynomials in the ring $R$ (the reason will 
become clear later on): %in the proof of the theorem.
$$D_1:=-\a_1^2 + \a_1\g_1x_3 + \a_1x_1 - \g_1^2 - \g_1x_1x_3 + \g_1x_{13} - 1,$$
$$D_2:=-\a_2^2 + \a_2\g_2x_3 + \a_2x_2 - \g_2^2 - \g_2x_2x_3 + \g_2x_{23} - 1.$$
Our argument will also need the following two by two matrix over $R$: 
\begin{equation}\label{pm5}
A=\left( \begin {array}{cc}
      2\a_2 - \g_2x_3 - x_2          &  -\a_2x_3 + 2\g_2 + x_2x_3 - x_{23}   \\
 \a_2x_3 - 2\g_2 - x_2x_3 + x_{23}& -\a_2x_3^2+2\a_2+\g_2x_3+x_2x_3^2-x_2- x_3x_{23}
        \end {array} \right).\qquad
\end{equation}
Define further $\tilde A$ to be the adjoint matrix of $A$, that is 
$\tilde A$ is $A$ with the diagonal entries permuted and the 
off-diagonal entries multiplied by $-1$. The product
$\tilde A A$ is the scalar matrix corresponding to the determinant of $A$.
We further consider the vector
$$
b:= \left( \begin {array}{c}
\a_2x_1- \g_2x_1x_3 + \g_2x_{13} - x_1x_2 + x_{12}      \\
                \a_2x_{13} - \g_2x_1 - x_2x_{13} + x_{123}
        \end {array} \right)\in R^2
$$
and define $r, s\in R$ by 
$$\left( \begin {array}{c}
                 r\\
                s
        \end {array} \right):=\tilde A b.
$$
Multiply now $D_1$ by $L_{23}^2$ and replace $y_1:=L_{23}^2\a_1$, 
$y_2:=L_{23}^2\g_1$, obtaining the polynomial
$$F(y_1,y_2):=
-y_1^2 + y_1y_2x_3 + y_1L_{23}x_1 - y_2^2 - y_2L_{23}x_1x_3 + 
y_2L_{23}x_{13} -L_{23}^2$$
in the variables $y_1,\, y_2$.

We need one more lemma. 
\begin{lemma}\label{qual}
Let ${\mathfrak D}_2$ be the ideal of $R$ generated by $D_2$ and ${\mathfrak D}$ the ideal
generated by $D_2$ and $L$. 
Then the following hold:
\begin{itemize}
\item[{(i)}] $\det(A)-L_{23}$ is in ${\mathfrak D}_2$;
\item[{(ii)}] $F(r,s)$ is in  ${\mathfrak D}$. 
\end{itemize} 
\end{lemma}
The proof of this lemma amounts to certain simple computations   
which are best done using a computer algebra system. The first item 
follows for example from the identity:
$$\det(A)-L_{23}=(x_3^2-4)D_2.$$
For the second item, the formula is more complicated. We skip the details.
\qed

%\centerline{\bf The proof}
\bigskip

We can now go over to the proof of the theorem. 

Let $K$ be any field. Let  
$x=(x_1, x_2,x_3, x_{12},x_{13},x_{23},x_{123})\in Z(K)$.
As we are working with traces and are thus allowed to make 
simultaneous conjugation, we start our search of solutions to $\pi (B_1, B_2, B_3)=x$  
by considering the following triples of two by two matrices over 
the polynomial ring $K[\a_1,\g_1,\a_2,\g_2]$:
\begin{equation}\label{pm1}
B_1=\left( \begin {array}{cc}
\a_1 & -\a_1x_3+ \g_1 + x_1x_3 - x_{13}\\
\a_1 & x_1-\a_1
\end {array} \right),\qquad
B_2=\left( \begin {array}{cc}
\a_2 & -\a_2x_3+ \g_2 + x_2x_3 - x_{23}\\
\g_2 & x_2-\a_2
\end {array} \right),\qquad
\end{equation}
\begin{equation}\label{pm2}
B_3=\left( \begin {array}{cc}
0 & 1\\
-1 & x_3
\end {array} \right).\qquad
\end{equation}

The condition that $B_1,\, B_2,\, B_3$ have determinant $1$ and satisfy 
$\pi(B_1, B_2, B_3)=x$ is equivalent to the four equations: 
\begin{equation}
D_1=-\a_1^2 + \a_1\g_1x_3 + \a_1x_1 - \g_1^2 - \g_1x_1x_3 + \g_1x_{13} - 1=0,
\end{equation}
\begin{equation}\label{dete2}
D_2=-\a_2^2 + \a_2\g_2x_3 + \a_2x_2 - \g_2^2 - \g_2x_2x_3 + \g_2x_{23} - 1=0,
\end{equation}
\begin{equation}
\begin{aligned}
\a_1(2\a_2 - \g_2x_3 - x_2) & +
\g_1(- \a_2x_3 + 2\g_2 + x_2x_3 - x_{23}) \\
& - \a_2x_1+ \g_2x_1x_3 - \g_2x_{13} + x_1x_2 - x_{12}=0,
\end{aligned}
\end{equation}
\begin{equation}
\begin{aligned}
\a_1(\a_2x_3 - 2\g_2 - x_2x_3 + x_{23}) & +
\g_1(-\a_2x_3^2 + 2\a_2 + \g_2x_3 +x_2x_3^2 - x_2 - x_3x_{23}) \\
& - \a_2x_{13} + \g_2x_1 + x_2x_{13} - x_{123}=0. 
\end{aligned}   
\end{equation}
Notice that the first equation is quadratic in $\a_1,\g_1$ only and the second
is quadratic in $\a_2,\g_2$ only. The third and fourth equations are written as a 
linear system in $\a_1,\g_1$. Defining
the vectors
$$y:= \left( \begin {array}{c}
                 \a_1\\
                \g_1
        \end {array} \right),\qquad
b:= \left( \begin {array}{c}
\a_2x_1- \g_2x_1x_3 + \g_2x_{13} - x_1x_2 + x_{12}      \\
                \a_2x_{13} - \g_2x_1 - x_2x_{13} + x_{123}
        \end {array} \right),\qquad
$$
the third and fourth of the above equations can be schematically written as
$$Ay=b$$
with the matrix $A$ defined in (\ref{pm5}) evaluated at our point $x$. 

We now assume that $K$ is a finite field.
%We shall now a case distinction concerning determinant of $A$.
We shall now write $L_{23}(x)$ for the polynomial $L_{23}$ defined above 
evaluated at our point $x\in Z(K)$, that is 
$L_{23}(x)=x_2^2+x_3^2+x_{23}^2-x_2x_3x_{23}-4$. We use a 
similar notation for all the other polynomials.

{\bf Case 1: At least one of the values $L_{12}(x)$, $L_{13}(x)$, $L_{23}(x)$ is nonzero.} 
Assume, say, $L_{23}(x)\ne 0$ (the other cases are similar). 

First we show that (\ref{dete2}), viewed as an equation in the indeterminates
$\a_2$, $\g_2$,  has a solution. Assume the contrary. Then by Lemma  
\ref{quale} we conclude that the characteristic of $K$ is two, 
$x_3=0$ and $x_2=x_{23}$. This contradicts the assumption 
$L_{23}\ne 0$.

We shall now fix a solution $(\a_2,\g_2)\in K^2$ of  equation (\ref{dete2}) 
and put these into the above matrix $A$ 
getting a two by two 
matrix over $K$. Similarly we get a vector $b$ in $K^2$. 
By Lemma \ref{qual} we find
$$\det(A)=L_{23}(x)\ne 0$$ 
which is guaranteed by our assumption. We now define $(\a_1,\g_1)\in K^2$
by
$$\left( \begin {array}{c}
                 \a_1\\
                \g_1
        \end {array} \right):=A^{-1} b.$$
By Lemma \ref{qual}(ii), we have found three matrices
 $B_1,\, B_2,\, B_3\in SL(2,K)$ satisfying
$\pi(B_1, B_2, B_3)=x$.

If now $L_{23}(x)=0$, we have either $L_{12}(x)\ne 0$ or $L_{13}(x)\ne 0$. 
These cases are treated in a similar way. \qed
%\end{proof}

\begin{remark} \label{rem:quad-closed}
The above proof remains true if $K$ is any quadratically closed field (cf. also \cite{Pe1}). 
\end{remark}

{\bf Case 2: $L_{12}(x)=L_{13}(x)=L_{23}(x)=0$}. 

Loosely speaking, our strategy in this case is to use automorphisms of the free group $F_3$ 
to get from $x$ another point of $Z(K)$ such that not all three values of $L_{ij}$ vanish 
at that point, and then use the result of Case 1. Let us make this more precise. 

We start with an obvious lemma. 

\begin{lemma} \label{lem:sym}
Let $n\ge 2$, let $F_n$ denote the free group on $n$ generators $X_1$,\dots, $X_n$, and let $G^n$ be the product of $n$ copies 
of a group $G$. The map 
$$
\Aut(F_n)\to \Sym(G^n), \quad \vp\mapsto\hat\vp , 
$$
defined by 
$$
\hat\vp (T)=(\vp (X_1)_T,\dots ,\vp (X_n)_T),
$$
is a group homomorphism. 

Here $T$ is an $n$-tuple of elements of $G$ and $\vp(X_i)_T$ is the element of $G$
obtained by substitution of the elements of $T$ instead of the $X_i$'s appearing 
in the expression of $\vp (X_i)$ in the basis $\{X_1,\dots ,X_n\}$.  \qed
\end{lemma}

The following constructions are described in \cite{Ho} (see also \cite{Ma1}, \cite{Pe2}), 
sometimes with details omitted. For the reader's convenience and sake of completeness 
we now focus on the case $n=3$ giving some more details. Fix a basis $\{X,Y,Z\}$ of $F_3$. 

\begin{Definition} \label{def:action}
For every $\vp\in \Aut(F_3)$ define a map $F_{\vp}\colon \BA^7\to\BA^7$ by the formula 
$$
F_{\vp}(u):=(P_{\vp (X)}(u), P_{\vp (Y)}(u),\dots ,P_{\vp (XYZ)}(u)), 
$$
where $P_w$ is the integer polynomial in $7$ variables corresponding to the word $w$ 
(cf. Theorem $\ref{Hor}$).
\end{Definition}

%We omit the proofs of the following two simple facts. 

\begin{lemma} \label{lem:pi-phi}
For every $\vp\in\Aut(F_3)$ and every $T\in SL(2,K)^3$ we have 
$$
\pi (\hat\vp (T))=F_{\vp}(\pi (T)). 
$$
\end{lemma}
\begin{proof} Obvious. 
\end{proof}
\begin{lemma} \label{lem:Z}
For every $\vp\in\Aut(F_3)$ and every field $K$ we have $F_{\vp }(Z(K))\subseteq Z(K)$. 
\end{lemma}
\begin{proof} 
We first prove that 
$F_{\vp }(Z(\ov {K}))\subseteq Z(\ov{K}),$ where $\ov {K}$ is an algebraic closure of $K.$
From this the needed inclusion will follow 
as soon as $F_{\vp }$ is defined over $K.$
In Case 1 we have proven that the map $\pi$ is surjective  onto an open subset 
$$U(\ov {K})=\{L_{12}\ne 0,L_{13}\ne 0,
L_{23}\ne 0\}\subseteq  Z(\ov K),$$
since the proof was valid for any algebraically closed field (see Remark \ref{rem:quad-closed}).

Let $u\in U(\ov {K}), u=\pi(T),  T\in SL(2,\ov K)^3.$ 
Then $F_{\vp}(u)=F_{\vp}(\pi (T))=\pi (\hat\vp (T))\in Z(\ov {K}).$
Hence, $F_{\vp }(U(\ov {K}))\subseteq Z(\ov{K}).$
Since $U$ is open in $Z$ and $Z$ is irreducible, the same inclusion is valid for $Z.$
Since $F_{\vp }$ is defined over $\BZ,$ the inclusion for $K$-points follows as well. 
\end{proof} 

\begin{lemma} \label{lem:hom}
\begin{itemize}
\item[{(i)}] $F_{\text{\rm{id}}}=\text{\rm{id}}$;
\item[{(ii)}] For every $\vp, \psi\in\Aut (F_3)$ and every $u\in Z(K)$ we have
$$
F_{\vp\circ\psi}(u)=F_{\vp}\circ F_{\psi}(u).
$$
\end{itemize}
\end{lemma}
\begin{proof} 

The first item is obvious, so let us prove the second one. 
Once again, similarly to \lemref{lem:Z}, it is sufficient to prove it 
over an open subset $U$ considered in \lemref{lem:Z}, 
and over the algebraically closed field $\ov {K}).$

Let  us take $u\in U(\ov {K}), u=\pi(T),  T\in SL(2,\ov K)^3.$
Using Lemmas \ref{lem:sym} and \ref{lem:pi-phi}, we get
$$
F_{\vp\circ\psi}(u)=\pi (\widehat{\vp\circ\psi}(T))=\pi (\hat\vp\circ\hat\psi(T)), 
$$
$$
F_{\vp}\circ F_{\psi}(u)=F_{\vp}(\pi(\hat\psi(T))=\pi (\hat\vp(\hat\psi(T))), 
$$
so the needed equality is proved. \end{proof}

\begin{cor} \label{cor:repr}
The correspondence $\vp\mapsto F_{\vp }$ defines a group homomorphism 
$\Aut (F_3)\to \Aut (Z)$ where $\Aut (Z)$ is the group of $\mathbb Z$-defined 
polynomial automorphisms of the variety $Z$. \qed
\end{cor}

We can now go over to the proof of the theorem in Case 2. 

Let, as above, $x\in Z(K)$ be such that $L_{12}(x)=L_{13}(x)=L_{23}(x)=0$. 

\medskip

{\bf Case 2a}. 
Let first assume that there exists $\vp\in\Aut (F_3)$ such that $u:=F_{\vp }(x)$ 
is such that not all three values  $L_{12}(u)$, $L_{13}(u)$, $L_{23}(u)$ are zero. 
By Case 1, there exists $T\in SL(2,K)^3$ such that $\pi (T)=u$. Define $T':=\hat\vp^{-1}(T)$. 
By Lemma \ref{lem:pi-phi} and Corollary \ref{cor:repr}, we have $\pi (T')=F_{\vp^{-1}}(\pi (T))=
F_{\vp^{-1}}(u)=F_{\vp}^{-1}(u)=x$, and we are done. 

\medskip

{\bf Case 2b.} Assume that there is no such $\vp$ as in Case 2a. 

%We want to show that this case 
%cannot occur. 

Denote by $L_{ij}^{\vp}$ (where $i,j$ stand for distinct numbers from 
the set $\{1,2,3\}$) the polynomials in 7 variables 
obtained after applying $F_{\vp}$ to $L_{ij}$. The needed contradiction immediately follows 
from the following proposition. 

\begin{Proposition} \label{prop:case2b}
Denote the automorphisms of $F_3$ sending the basis $\{X,Y,Z\}$ to the 
bases $\{XY,Y,Z\}$, $\{X,YZ,Z\}$, $\{X,Y,XZ\}$, $\{XY^{-1},Y,Z\}$, $\{X,Y,YZ\}$, $\{XY^2,Y,Z\}$, 
$\{X,ZYZ^{-1},Z\}$, $\{X,Y,XZX^{-1}\}$, by $\vp_1,\dots ,\vp_8$, respectively.  Denote by $\mathfrak a$ the ideal in 
$\mathbb Z[x_1,\dots ,x_{123}]$ generated by the functions $L_{ij}^{\vp_m}$ where, as above, 
$i,j$ stand for distinct numbers from 
the set $\{1,2,3\}$, and $k=1,\dots ,8$, and let 
$$Z_{\mathfrak a}(K)=\{x\in \BA^7(K): f(x)=0 {\text{\rm{ for all }}} f\in\mathfrak a\}.$$
Then for any field $K$ of characteristic different from $2$ we have 
$$
\begin{aligned}
Z_{\mathfrak a}(K) = & \{(2,2,2,2,2,2,2), (0,-2,-2,0,0,2,0), (0,-2,2,0,0,-2,0), (0,2,-2,0,0,-2,0), \\ 
&(0,2,2,0,0,2,0), (0,0,0,-2,-2,-2,0), (0,0,0,-2,2,2,0), (0,0,0,2,-2,2,0), \\ 
& (0,0,0,2,2,-2,0)\}, 
\end{aligned}
$$ and for any field of characteristic $2$ 
we have $Z_{\mathfrak a}(K)=\{(0,0,0,0,0,0,0), (1,0,0,1,1,0,1)\}$.   
\end{Proposition}

\begin{proof}
MAGMA computation. 
\end{proof}

For each of the points $x$ appearing in Proposition \ref{prop:case2b} one can easily exhibit an explicit triple 
of matrices $T$ such that $\pi (T)=x$. Say, $\pi (\Idd ,\Idd , \Idd )=(2,2,2,2,2,2,2)$, $$ \pi \left(\left(
\begin{matrix}
0 & -1 \\
1 & 0
\end{matrix}
\right), \, 
\left(
\begin{matrix}
-1 & 0 \\
0 & -1
\end{matrix}
\right), \, 
\left(
\begin{matrix}
-1 & 0 \\
0 & -1
\end{matrix}
\right)
\right)
=(0,-2,-2,0,0,2,0),$$ and so on.

Theorem \ref{surj} is proved. 
\end{proof}

Coming back to the map $\tilde{\vp},$ let us  consider an additional condition:
\begin{equation}\label{cond}
u=w(x,y),
\end{equation}
where $x\in \tG, y\in \tG$ and $w\in F_2$. Let $g_3(a_1,a_2,a_{12})=\tr(w(x,y)),$ 
$g_{13}(a_1,a_2,a_{12})=\tr(w(x,y)x),$ 
$g_{23}(a_1,a_2,a_{12})=\tr(w(x,y)y),$
$g_{123}(a_1,a_2,a_{12})=\tr(w(x,y)xy).$ Then \eqref{cond} defines a
three-dimensional variety $W(w)\subset Z$:
\begin{equation} \label{Ww}
W(w)=Z\bigcap\left\{\begin{aligned}
a_3&=g_3(a_1,a_2,a_{12}),\\
             a_{13}&=g_{13}(a_1,a_2,a_{12}),\\  a_{23}&=g_{23}(a_1,a_2,a_{12}),\\
a_{123}&=g_{123}(a_1,a_2,a_{12})\end{aligned}\right \}.
\end{equation}

We can now formulate a result which treats the $SL(2,q)$-case for
three-variable maps and thus makes a crucial step towards getting a
sufficient condition for a given sequence of type (\ref{eq:ours-it})
to characterize finite solvable groups.

\begin{Theorem}\label{s3vm}
Let $v(x,u,y)$ and $w(x,y)$ be words in the free groups with three
and two generators, respectively. Define a sequence $u_n(x,y)$ by
the following recurrence relations:
$$u_0(x,y)=w(x,y), \ u_{n+1}(x,y)=v(x,u_n(x,y),y).$$

Let $\vp\colon \tG\times \tG\times \tG\to \tG$ be the map defined by
$(x,u,y)\mapsto v(x,u,y)$, let $F(\vp)$ be the variety of fixed
points of the trace map $\psi$ induced by $\vp$ (see diagram
$(\ref{d6})$), and let $W(w)$  be defined by $\eqref{Ww}$.  
With the notation of Theorem $\ref{surj}$, let 
$V=\{a_2=2, a_1=a_{12}, a_3=a_{23}, a_{13}=a_{123}\}$. 

Assume that $F(\vp)\bigcap W(w)$ contains a positive dimensional,
absolutely irreducible $\BQ$-sub\-variety $\Phi$ such that
$\Phi':=\Phi\setminus(\Phi\bigcap V)$ %\Phi\bigcap Y$ 
is an open subset of $\Phi.$

Then there is $q_0$ such that for every  $q>q_0$ there exists a pair $(x,y)\in\tG\times
\tG$ with $u_n(x,y)\ne 1$ for all $n\in\BN.$
 \end{Theorem}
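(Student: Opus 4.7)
The plan is to realize the required pair $(x,y)$ by lifting an $\BF_q$-point of $\Phi'$ to $SL(2,q)^3$ via Theorem \ref{surj}, and then to use commutativity of diagram \eqref{d6} to propagate the fixed-point property of $\psi$ through every iterate $u_n$.

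First I would apply the Lang--Weil estimates to produce a point $P \in \Phi'(\BF_q)$ for every $q \geq q_0$. Since $\Phi$ is absolutely irreducible over $\BQ$ and of positive dimension, a spreading-out argument extends it to an absolutely irreducible $\BZ[1/N]$-scheme for some $N$, and for $q$ coprime to $N$ one has $|\Phi(\BF_q)| = q^{\dim \Phi}(1+o(1))$. The hypothesis that $\Phi'$ is a (nonempty) open subset of $\Phi$ forces $\Phi \cap V$ to be a proper closed subvariety, which contributes only $O(q^{\dim \Phi - 1})$ points, so $\Phi'(\BF_q)$ is nonempty for $q$ large; pick such a $P$.

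Next I would invoke Theorem \ref{surj} to lift $P$ to a triple $(x, u, y) \in SL(2,q)^3$ with $\pi(x, u, y) = P$. Because $P \in W(w)$, the four traces $\tr(u), \tr(xu), \tr(yu), \tr(xyu)$ are prescribed by the polynomials $g_3, g_{13}, g_{23}, g_{123}$ evaluated at $(a_1, a_2, a_{12})$, which by construction equal $\tr(w(x,y))$, $\tr(xw(x,y))$, $\tr(yw(x,y))$, $\tr(xyw(x,y))$; hence the particular element $u_0 := w(x,y)$ itself satisfies $\pi(x, u_0, y) = P$. Since $P$ is a fixed point of $\psi$ and diagram \eqref{d6} commutes, a straightforward induction gives $\pi(x, u_n, y) = \psi^{(n)}(P) = P$ for every $n \in \BN$. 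Now, suppose for contradiction that $u_n = 1$ for some $n$; then $\pi(x, 1, y)$ would equal $P$, but this trace projection satisfies the defining equations of $V$, contradicting $P \in \Phi \setminus V$. Hence $u_n \neq 1$ for all $n$, as required.

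The main obstacle is uniformity of the Lang--Weil estimate across all primes: one must check that the absolute irreducibility of $\Phi$ over $\BQ$ descends to the reduction $\Phi \bmod p$ for every prime outside a finite exceptional set, so that a single $q_0$ suffices in every characteristic. As noted in the remark following Theorem \ref{condition}, this is guaranteed by \cite[Theorem IV, 9, 7.7(i)]{Gr} whenever $\Phi$ admits a $\BZ$-model, which here is automatic since $F(\vp) \cap W(w)$ is cut out by polynomials with integer coefficients. A secondary technical point is that Theorem \ref{surj} furnishes only \emph{some} lift $(x, u, y)$ of $P$, not necessarily with $u = w(x,y)$; the $W(w)$ condition makes this harmless, because $(x, w(x,y), y)$ is then an equally valid lift, which is all the induction requires.
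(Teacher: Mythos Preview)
Your proposal is correct and follows essentially the same approach as the paper's proof: pick an $\BF_q$-point of $\Phi'$ via Lang--Weil, lift it by Theorem~\ref{surj}, replace the lifted $u$ by $w(x,y)$ using the $W(w)$ condition, propagate the fixed-point property of $\psi$ to all iterates by diagram~\eqref{d6}, and conclude $u_n\ne 1$ from avoidance of $V$. Your write-up is in fact more careful than the paper's on the two points you flag (uniform absolute irreducibility in the reductions, and why one may take $u=w(x,y)$ after lifting), but these are elaborations of the same argument rather than a different route.
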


\begin{proof}  Let $q_0$ be such that
 $\Phi'(\BF_q)\neq\emptyset.$
Let $\ov a\in \Phi'(\BF_q).$ 
%Since $\ov a \in  Y $, 
By \thmref{surj}, there is a triple
$(x,u,y)\in \tG\times \tG\times \tG$ such that $\pi(x,u,y)=\ov a.$
Moreover, since $\ov a\in W(w),$ we may take $u=w(x,y).$ Since $\ov
a\in \Phi$, we have $\psi(\ov a)=\ov a,$ hence $
\pi(x,u_1(x,y),y)=\ov a.$ Similarly, $ \pi(x,u_n(x,y),y)=\ov a$ for
all $n\in \BN.$

Since %$\ov a\not\in (A_{13} \cup A_{23}) $, for any $n$ we have
%$\tr([x, u_n(x,y)])\ne 2$ and  $\tr([y, u_n(x,y)])\ne 2$, thus
$a_2=\tr u_n(x,y)\ne 2$, we have $u_n(x,y)\ne 1.$
\end{proof}

\begin{Remark} \label{SL-PSL}
Although this section was completely devoted to considering the group $SL(2)$ 
(until now $PSL(2)$ only appeared in its title), the obtained results (in particular, 
Theorems \ref{condition} and \ref{s3vm}) are also applicable to the $PSL(2)$-case. 
(In the two-variable case, this is explicitly explained at the end of the proof of 
Theorem \ref{condition}, the case of Theorem \ref{s3vm} is similar). 
%and the proof of Theorem \ref{s3vm} can be appropriately modified 
%by shrinking the set $Y$ so that to exclude elements with trace $-2.$)  
\end{Remark}

\section{Case $G=Sz(q)$} \label{sec:Suzuki}

%We maintain the notation of Example 1 of Section \ref{sec:ex}.

%Consider a map  $\varphi _y(x)$ meeting the neccesary
%conditions  on the

In this section we consider a map $\vp\colon G\times G\to G$ where
$G$ is a Suzuki group, $Sz(q)$, $q=2^{2m+1}$, $m\ge 1$. As above,
for a fixed $y\in G$ we denote by $\varphi _y\colon G\to G$ the map
$(x,y)\mapsto \vp(x,y).$  There is no trace map in this case.
Nevertheless there is a factorization (see diagram (\ref{dd})) which
simplifies the picture. This leads to a sufficient condition
(\thmref{ScSz}) for the existence of periodic points.  Although the
condition is not that simple, we have an example in
\subsecref{subsec:BWW} when it works.

Recall that according to the Bruhat decomposition, $G=U_1\cup U_2, $
where the first Bruhat cell $U_1=B$  consists of all
lower-triangular matrices of the form $x=T(a,b)D(k)$ with
$$
T(a,b)=\begin{pmatrix}   1 & 0 & 0 & 0\\
a & 1& 0 & 0\\
a^{1+s}+b & a^s & 1& 0\\
a^{2+s}+ab+b^s& b & a & 1\end{pmatrix},
$$
\
$$
D(k) = \begin{pmatrix} k^{s/2+1} & 0 & 0 & 0\\
0 & k^{s/2} & 0 & 0\\
0 & 0 & k^{-s/2} & 0\\
0 & 0 & 0 & k^{-s/2-1}\end{pmatrix},
$$
and the second Bruhat cell $U_2$ consists of the matrices

\begin{equation}x=T(a,b)D(k)wT(c,d),\end{equation}
where
$$
w = \begin{pmatrix}
0 & 0 & 0 & 1\\
0 & 0 & 1 & 0\\
0 & 1 & 0 & 0\\
1 & 0 & 0 & 0\end{pmatrix}.
$$
Here $a,b\in \BF_q$, $k\in\BF^*_q$, $s=2^{m+1}$.

Recall the following properties of these matrices:
\begin{itemize}
\item[(i)]  $T(0,1)T(a,b)=T(a,b)T(0,1)$;
\item[(ii)] $D(k)w=wD(k^{-1})$;
\item[(iii)] $T(a,b)T(c,d)=T(a+b,ac^s+b+d)$;
\item[(iv)] $wT(0,t)w=T(t^{1-s},t^{-1})D(t^{2s/(s+2)})wT(t^{1-s},0)$;
\item[(v)] $T(0,1)^{-1}=T(0,1)$;
\item[(vi)] $D(k)^{-1}T(a,b)D(k)=T(ak,bk^{1+s})$.
\end{itemize}

For $x=T(a,b)D(k)wT(c,d)\in U_2$ define
$$x'=\varkappa(x)=T(c,d)xT(c,d)^{-1}=T(c,d)T(a,b)D(k)w=T(a+c,ca^s+b+d)D(k)w.$$

Note that for any $z=T(\alpha,\beta)$ we have
\begin{align*}\varkappa(zxz^{-1})&=\varkappa(T(\alpha,\beta)T(a,b)D(k)wT(c,d)T(\alpha,\beta)^{-1})
\\ &=T(c,d)T(\alpha,
\beta)^{-1}T(\alpha,\beta)T(a,b)D(k)w=T(c,d)T(a,b)D(k
)w=\varkappa(x).\end{align*}

\begin{lemma}\label{S}  If for any  $y,x,h\in G$ we have
\begin{equation}\label{ce1}
\varphi _y (hxh^{-1})=h\varphi _{hyh^{-1}}(x)h^{-1},
\end{equation}
then for $y=T(0,t)$ we have
$$\varphi _y(\varkappa(x))=\varkappa(\varphi
_y(x)).$$
\end{lemma}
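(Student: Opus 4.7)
The key observation is that $y=T(0,t)$ is central in the unipotent subgroup $T:=\{T(\alpha,\beta):\alpha,\beta\in\BF_q\}$. This follows from property (iii): $T(c,d)T(0,t)=T(c,c\cdot 0^s+d+t)=T(c,d+t)$ and $T(0,t)T(c,d)=T(c,0\cdot c^s+t+d)=T(c,d+t)$ agree, so $T(c,d)\,y\,T(c,d)^{-1}=y$ for every $c,d\in\BF_q$.

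Substituting the definition $\varkappa(x)=T(c,d)\,x\,T(c,d)^{-1}$ into $\varphi_y(\varkappa(x))$ and applying the equivariance hypothesis with $h=T(c,d)$, I obtain
$$
\varphi_y(\varkappa(x))=\varphi_y\bigl(T(c,d)\,x\,T(c,d)^{-1}\bigr)=T(c,d)\,\varphi_{T(c,d)\,y\,T(c,d)^{-1}}(x)\,T(c,d)^{-1}=T(c,d)\,\varphi_y(x)\,T(c,d)^{-1},
$$
where the last equality uses the centrality of $y$ just established. This identity is really the conceptual content of the lemma.

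To identify the right-hand side with $\varkappa(\varphi_y(x))$, I would apply $\varkappa$ to both sides and invoke the $T$-invariance $\varkappa(zgz^{-1})=\varkappa(g)$ for $z\in T$ recorded immediately before the lemma (with $z=T(c,d)$). This yields $\varkappa(\varphi_y(\varkappa(x)))=\varkappa(\varphi_y(x))$. Since $\varkappa(x)=T(a+c,ca^s+b+d)D(k)w\in U_1 w$ lies in the canonical coset whose elements have trivial right $T$-factor in their Bruhat decomposition, $\varkappa(x)$ is itself $\varkappa$-fixed; granting that $\varphi_y$ preserves $U_1w$ in our setup, $\varphi_y(\varkappa(x))$ is also $\varkappa$-fixed, and so $\varphi_y(\varkappa(x))=\varkappa(\varphi_y(\varkappa(x)))=\varkappa(\varphi_y(x))$ as claimed.

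The main obstacle --- and the reason the final step requires care --- is the verification that $\varphi_y(\varkappa(x))$ itself lies in $U_1 w$, so that $\varkappa$ acts on it as the identity. This is a structural property of $\varphi$ that must be read off from the specific form of the map, e.g., from the word-map structure implicit in the paper's setup; absent such input, one only obtains the weaker statement that the two sides are equal modulo $T$-conjugation. Once this structural fact is granted, the lemma is a two-line consequence: the centrality of $T(0,t)$ in $T$ converts the equivariance hypothesis into the displayed identity, and the $T$-invariance of $\varkappa$ closes the loop.
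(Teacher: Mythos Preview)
Your argument is essentially the paper's own proof. The paper too uses that $T(0,t)$ commutes with every $z=T(c,d)$ to pass from the equivariance hypothesis to $\varphi_y(zxz^{-1})=z\varphi_y(x)z^{-1}$, and then writes the chain $\varphi_y(\varkappa(x))=\varkappa(z\varphi_y(x)z^{-1})=\varkappa(\varphi_y(x))$ without further comment.

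The subtlety you flag in your last paragraph is real, and the paper glosses over it in exactly the same place. The second equality in the paper's chain is just the $T$-invariance of $\varkappa$; but the \emph{first} equality silently asserts that $z\varphi_y(x)z^{-1}$ is already $\varkappa$-fixed, i.e.\ has trivial right $T$-factor in its Bruhat decomposition. As you observe, this is equivalent to the right $T$-factor of $\varphi_y(x)$ coinciding with that of $x$, which is not a general feature of word maps. What both arguments rigorously establish is the weaker identity $\varkappa\bigl(\varphi_y(\varkappa(x))\bigr)=\varkappa(\varphi_y(x))$, obtained by applying $\varkappa$ to $\varphi_y(\varkappa(x))=z\varphi_y(x)z^{-1}$. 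This weaker statement is exactly what is used in the proof of the subsequent corollary, where only the coordinates $\pi_1,\pi_2,k$ of $\varkappa$ matter. So your caution is well-placed: you have not missed anything the paper supplies, and the ``structural fact'' you ask for is neither stated nor needed downstream.
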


\begin{proof} For $z=T(c,d)$ we have
$$\varphi_y(zxz^{-1})=z
\varphi_{z^{-1}yz}(x)z^{-1}.$$ Since the matrices $T(0,t)$ commute
with any $z,$ it follows that
$$\varphi_y(zxz^{-1})=z\varphi_y(x)z^{-1},$$ i.e.
$$\varphi_y(\varkappa(x))=\varkappa(z\varphi_y(x)z^{-1})=\varkappa(\varphi_y(x)).$$
\end{proof}

From now on until the end of this section we only consider elements
$x$ from the second Bruhat cell.

\begin{cor}\label{com} For $x\in U_2$ denote $\pi_1(x)=a+c,$\ $\pi_2(x)=ca^s+b+d,$\ $k(x)=k.$
Then for $y=T(0,t)$ there exist functions $f,g$ and $h$ such that if
$\varphi_y(x)\ne 1$ then
$$\pi_1(\varphi_y(x))=f(\pi_1(x),\pi_2(x),k(x)),$$
$$\pi_2(\varphi_y(x))=g(\pi_1(x),\pi_2(x),k(x)),$$
$$k(\varphi_y(x))=h(\pi_1(x),\pi_2(x),k(x)).$$
\end{cor}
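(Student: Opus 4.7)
The plan is to exploit two facts: (i) once we apply $\varkappa$ to an element of $U_2$, it is completely determined by the triple $(\pi_1,\pi_2,k)$; and (ii) by Lemma \ref{S}, the map $\varkappa$ commutes with $\varphi_y$ when $y=T(0,t)$. Combining these should give the required functional dependencies.

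More precisely, first I would record the identity
$$
\varkappa(x)=T(a+c,\,ca^s+b+d)\,D(k)\,w=T\bigl(\pi_1(x),\pi_2(x)\bigr)\,D\bigl(k(x)\bigr)\,w,
$$
obtained by a direct computation using property (iii) and the definition of $\varkappa$. This says that the map $x\mapsto \varkappa(x)$ factors as the composition $\pi:=(\pi_1,\pi_2,k)\colon U_2\to \BF_q\times\BF_q\times\BF_q^*$ followed by the injection $(\alpha,\beta,\kappa)\mapsto T(\alpha,\beta)D(\kappa)w$; note the second map is injective because of the uniqueness of the Bruhat decomposition $T(\alpha,\beta)D(\kappa)wT(0,0)$ of an element of $U_2$.

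Next, I would apply Lemma \ref{S}, which gives
$\varphi_y(\varkappa(x))=\varkappa(\varphi_y(x))$ for $y=T(0,t)$. The left-hand side depends only on $\varkappa(x)$ and hence, by the previous step, only on the triple $(\pi_1(x),\pi_2(x),k(x))$. Therefore $\varkappa(\varphi_y(x))$ is a function of $(\pi_1(x),\pi_2(x),k(x))$ alone. Provided $\varphi_y(x)\in U_2$ (which is what the hypothesis $\varphi_y(x)\ne 1$ is used to exclude failure of, modulo the understanding that only the $U_2$-component is relevant here), we can on the other hand expand
$$
\varkappa(\varphi_y(x))=T\bigl(\pi_1(\varphi_y(x)),\pi_2(\varphi_y(x))\bigr)\,D\bigl(k(\varphi_y(x))\bigr)\,w
$$
by the same formula as in step one. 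By the injectivity mentioned above, the parameters $\pi_1(\varphi_y(x)),\pi_2(\varphi_y(x)),k(\varphi_y(x))$ are uniquely determined by $\varkappa(\varphi_y(x))$, hence by $(\pi_1(x),\pi_2(x),k(x))$. This produces the three functions $f,g,h$ claimed in the corollary.

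The main subtlety, and essentially the only non-formal point, is checking that $\varphi_y(x)$ lies in $U_2$ so that $\pi_1,\pi_2,k$ are defined on it; the hypothesis $\varphi_y(x)\ne 1$ is there to guarantee we are not in a degenerate part of the Bruhat stratification. Everything else is bookkeeping with the explicit parametrization of $U_2$ and the commutation relation established in Lemma \ref{S}.
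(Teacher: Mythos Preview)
Your proof is correct and follows essentially the same route as the paper: both use the identity $\varkappa(x)=T(\pi_1(x),\pi_2(x))D(k(x))w$, invoke Lemma~\ref{S} to commute $\varphi_y$ with $\varkappa$, and then read off the functions $f,g,h$ from uniqueness of the Bruhat parameters. You are slightly more explicit about the injectivity step and about the role of the hypothesis $\varphi_y(x)\ne 1$, but the argument is the same.
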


\begin{proof} Indeed, by construction $\varkappa(x)=T(\pi_1(x),\pi_2(x))
D(k(x))w.$ Thus by \lemref{S}, we have
$$T(\pi_1(\varphi_y(x)),\pi_2(\varphi_y(x))D(k(\varphi_y(x))w=\varkappa(\varphi_y(x))=\varphi_y
(\varkappa(x))=\varphi_y(T(\pi_1(x),\pi_2(x))D(k(x))w).$$ It follows
that $\pi_1(\varphi_y(x)),\pi_2(\varphi_y(x))$ and $k(\varphi_y(x))$
are determined uniquely by the values of $\pi_1(x),$\ $\pi_2(x)$ and
$k(x).$
\end{proof}

% We denote symbolically $T(0,0)=T(0,0)D(0)z.$

\corref {com} may be expressed by the following commutative diagram
of $\BF_q$-morphisms:
\begin{equation}
\begin{CD}
\BA^2_{a,b}\times\BA^*_{k}\times\BA^2_{c,d}\supseteq U @>\varphi_y>>\BA^2_{a,b}\times\BA^*_{k}\times\BA^2_{c,d}\\
@V\pi VV @V\pi VV \\
\BA^2_{a,b}\times\BA^*_{k}@> \psi >>\BA^2_{a,b}\times\BA_{k}^*\\
\end{CD}
\label{dd}
\end{equation}
where $U$ denotes  the set of $x\in U_2$ such that
$\varphi_y(x)\ne 1.$

This corollary provides the following sufficient condition for the
existence of periodic points which can be viewed as an analogue of
Theorem \ref{condition}:

\begin{Theorem}\label {ScSz}
Let $G=Sz(q)$, let $y=T(0,1)\in G$, and suppose that the map $\vp_y$
satisfies the following conditions:
\begin{itemize}
\item\label{z1} equality \eqref{ce1} holds for any $x,y,h\in G$;
\item\label{z2} the morphism $\psi\colon \BA^2_{a,b}\times\BA^*_{k}\to
\BA^2_{a,b}\times\BA^*_{k}$ induced by $\vp_y$ (see diagram $(\ref{dd})$)
has an invariant set $V$ (i.e. $\psi(V)\subset V$).
%\item\label{z3}  For any field $\BF$ of order $2^{n}, n=2m+1$
% there is a point $(a_n, b_n, k_)\in V(\BF).$
\end{itemize}
Then the map $\vp_y\colon G\to G$ has a non-identity periodic point.
\end{Theorem}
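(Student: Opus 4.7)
The strategy is to use the finiteness of the invariant set $V$ over $\BF_q$ to produce a periodic point of $\psi$ on $V$, and then to lift it to a periodic point of $\vp_y$ via an explicit section of $\pi$. Since $V\subseteq (\BA^2\times\BA^*)(\BF_q)$ is a subset of a finite set and $\psi(V)\subseteq V$, any $\psi$-orbit in $V$ is eventually periodic, so there exists a periodic point $\tilde v\in V$ with $\psi^{(N)}(\tilde v)=\tilde v$ for some $N\geq 1$.

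To lift this periodicity back to $G$, I introduce the \emph{canonical section} $\xi\colon \BA^2_{a,b}\times\BA^*_k\to U_2\subseteq G$ defined by $\xi(a,b,k):=T(a,b)D(k)w$. A direct computation using the definitions of $\pi$ and $\varkappa$ shows that $\pi(\xi(v))=v$ and $\varkappa(\xi(v))=\xi(v)$; in particular $\xi(v)$ is the unique $\varkappa$-fixed point in the fibre $\pi^{-1}(v)\cap U_2$, and $\xi(v)\neq 1$ since it lies in the second Bruhat cell.

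The key step will be to verify the compatibility $\vp_y\circ\xi=\xi\circ\psi$ on $V$. For $v\in V$, the well-definedness of $\psi(v)$ through the formulas of \corref{com} forces $\vp_y(\xi(v))\in U_2$, i.e.\ $\vp_y(\xi(v))\neq 1$. \lemref{S} then yields $\vp_y(\xi(v))=\vp_y(\varkappa(\xi(v)))=\varkappa(\vp_y(\xi(v)))$, so $\vp_y(\xi(v))$ is $\varkappa$-fixed in $U_2$, hence equal to $\xi(u)$ for a unique $u$. Commutativity of diagram \eqref{dd} then identifies $u=\pi(\vp_y(\xi(v)))=\psi(\pi(\xi(v)))=\psi(v)$, and so $\vp_y(\xi(v))=\xi(\psi(v))$.

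Setting $x_0:=\xi(\tilde v)$, this compatibility together with the invariance $\psi^{(j)}(\tilde v)\in V$ yields inductively $\vp_y^{(j)}(x_0)=\xi(\psi^{(j)}(\tilde v))$ for all $j\geq 0$. In particular $\vp_y^{(N)}(x_0)=\xi(\tilde v)=x_0$ and $x_0\neq 1$, so $x_0$ is the desired non-identity periodic point of $\vp_y$. The main subtlety to keep an eye on is that the iterates never hit the identity along the way (otherwise \lemref{S}, which presupposes the argument to lie in $U_2$, is no longer applicable and the diagram \eqref{dd} breaks down); this is precisely what $\psi$-invariance of $V$ secures, since it forces $\psi^{(j)}(\tilde v)$ to remain in the domain where the formulas of \corref{com} apply, and hence $\vp_y(\xi(\psi^{(j)}(\tilde v)))\neq 1$ at each step.
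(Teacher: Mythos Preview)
Your proof is correct and follows the same idea as the paper's, only far more explicitly: the paper's entire proof is the single sentence ``Indeed, the cell $U_2$ does not contain the identity matrix,'' leaving the finiteness-and-lifting argument implicit, whereas you carefully build the section $\xi$, verify the intertwining $\vp_y\circ\xi=\xi\circ\psi$ (which is essentially the content of the proof of \corref{com}), and then conclude. One small imprecision: writing ``$\vp_y(\xi(v))\in U_2$, i.e.\ $\vp_y(\xi(v))\neq 1$'' conflates two distinct conditions, since $G\setminus\{1\}$ also contains $U_1\setminus\{1\}$; what you actually need and use is the stronger statement $\vp_y(\xi(v))\in U_2$ (which then implies $\neq 1$), and this is exactly what the well-definedness of $\psi$ at $v$ together with \lemref{S} gives you.
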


\begin{proof} Indeed, the cell $U_2$ does not contain the identity
matrix.
\end{proof}

\begin{Remark}\label{rmk}
In view of \eqref{ce1}, the statement of Theorem \ref{ScSz} holds
for any $y=T(0,t).$
\end{Remark}

\section{Examples}\label{Examples}

In this section we want to demonstrate how the trace map works. In
\subsecref{subsec:BWW} we consider the two-variable case and give a
new proof of the main theorem of \cite{BWW} characterizing finite
solvable groups. In \subsecref{un} we compute the trace map for the
three-variable sequence from \cite{BGGKPP1}, \cite{BGGKPP2} (that
also characterizes finite solvable groups). In \subsecref{y2} we
apply our method for finding a modified sequence having the same
property. 
Subsection \ref{Com} contains an illustration of the
method for a simple case where the word under consideration is
commutator.

\subsection{The sequence of Bray--Wilson--Wilson}\label{subsec:BWW}

The sequence $s_n(x,y)$ of \cite{BWW} is defined as follows:
$$s_1=x, \quad s_2=[{y}^{-1}xy,{x}^{-1}], \dots , 
 s_n=[ {y} ^{-1}s_{n-1} {y},s_{n-1}^{-1}],\dots ,$$

Recall the main result of \cite{BWW}.

\begin{Theorem}\label{th:BWW} (\cite{BWW})
A finite group  $G$ is solvable if and only if
$$(\exists \  n\in\BN) \ (\forall (x,y)\in G \times G)\ s_n(x,y)=1.$$
\end{Theorem}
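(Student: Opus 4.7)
The ``only if'' direction is the easy one. Starting from $s_1 = x \in G$, an obvious induction shows that $s_n \in G^{(n-1)}$: indeed $y^{-1} s_{n-1} y$ and $s_{n-1}^{-1}$ both lie in $G^{(n-1)}$ by induction, and their commutator lies in $G^{(n)}$. Hence if $G$ is solvable of derived length $d$, then $s_{d+1}(x,y)=1$ identically.

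For the ``if'' direction, by Thompson's list it suffices to produce, in every finite minimal simple non-solvable group $G$, a pair $(x,y)$ such that $s_n(x,y)\neq 1$ for all $n\ge 1$. Because $G$ is finite and $s_{n+1}$ is obtained from $s_n$ by applying the self-map $\sigma_y(w):=[y^{-1}wy,w^{-1}]$, it is enough to produce $y\in G$ such that $\sigma_y\colon G\to G$ has a periodic point $w\neq 1$ which is hit by the iterative sequence starting at some $x$; taking $x$ equal to such a $w$ itself works. Thus for cases (1)--(3) of Thompson's list (the $PSL(2,q)$ cases) the problem reduces to applying Theorem \ref{condition} to the word map $\varphi(x,y)=[y^{-1}xy,x^{-1}]$, for case (4) to applying Theorem \ref{ScSz}, and for case (5) ($G=PSL(3,3)$) to a finite direct computation (or a MAGMA check).

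The core technical step is the application of Theorem \ref{condition} to the word $\varphi(x,y)=[y^{-1}xy,x^{-1}]$. Set $s=\tr x$, $t=\tr y$, $u=\tr(xy)$. Using the Fricke--Klein--Vogt identities (Theorem \ref{Hor}) I would compute the trace map
\[
\psi(s,u,t)=(f_1(s,u,t),\,f_2(s,u,t),\,t),\qquad f_1=\tr\varphi,\ f_2=\tr(\varphi\cdot y),
\]
by rewriting $\varphi(x,y)=y^{-1}xy\,x^{-1}\,y^{-1}x^{-1}y\,x$ and reducing with the standard $SL_2$ trace relations $\tr(AB)+\tr(AB^{-1})=\tr A\cdot\tr B$ and $\tr(A^{-1})=\tr A$. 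Then I would analyse the fixed-point variety
\[
\Phi=\{f_1(s,u,t)=s,\ f_2(s,u,t)=u\}\subset \mathbb{A}^3_{s,u,t},
\]
which always contains the trivial line $L_1=\{s=2,\,u=t\}$. The crucial step is to exhibit an absolutely irreducible $\mathbb{Q}$-component $W\subset \Phi$ other than $L_1$; then by Theorem \ref{condition} (combined with Lang--Weil and the remark after it) a non-identity periodic point of $\sigma_y$ exists for every sufficiently large $q$ in cases (1)--(3). Small $q$'s are handled separately by explicit computation, consistently with the $q>3$ restriction in the statement. I anticipate that the hardest part will be this irreducibility verification: the equations $f_1=s$, $f_2=u$ are of moderate degree and will typically split into $L_1$ plus one or more residual components, and one must check that the residual piece has a geometrically irreducible constituent defined over $\mathbb{Q}$ (or over $\mathbb{Z}$ away from a few small primes).

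For case (4), $G=Sz(q)$, the word $\varphi(x,y)=[y^{-1}xy,x^{-1}]$ visibly satisfies the equivariance $\varphi(hxh^{-1},hyh^{-1})=h\varphi(x,y)h^{-1}$, so hypothesis \eqref{ce1} of Theorem \ref{ScSz} holds. I would then fix $y=T(0,1)$, compute the induced map $\psi$ on $\mathbb{A}^2_{a,b}\times\mathbb{A}^*_k$ via Corollary \ref{com}, and locate a $\psi$-invariant subset $V$; any non-empty $V$ gives a non-identity periodic point of $\sigma_y$ on the second Bruhat cell, since that cell avoids the identity. Finally, $G=PSL(3,3)$ is handled by a direct calculation, exhibiting explicit $x,y$ with $\sigma_y$-orbit of $x$ bounded away from $1$. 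Combining all five cases yields the theorem; the substantive difficulty is confined to the trace-variety analysis in step three, where absolute irreducibility of a non-trivial component of $\Phi$ is the bottleneck.
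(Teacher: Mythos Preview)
Your proposal is correct and follows essentially the same route as the paper: reduce to Thompson's list, handle $PSL(2,q)$ via the trace map and Theorem~\ref{condition} by exhibiting an absolutely irreducible component of $\Phi$ other than $L_1$, handle $Sz(q)$ via Theorem~\ref{ScSz} with $y=T(0,1)$, and treat $PSL(3,3)$ directly. The paper carries out precisely the computations you flag as the bottleneck: it gives explicit formulas for $f_1,f_2$, uses Magma to split $\Phi\setminus L_1$ into two absolutely irreducible genus-$1$ components $W_1,W_2$ (each with three punctures, so containing $\BF_q$-points for all $q\ge 7$), and in the Suzuki case computes the induced map on $\{a=0\}$ explicitly to find the invariant set.
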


The proof reduces to the following:

\begin{Theorem}\label{tt}  (\cite{BWW})
Let $G=PSL(2,\BF_q)$, $q>3$, or $G=Sz(2^{2m+1}).$
Then there exists a pair $(x,y)\in G\times G$ such that
$s_n(x,y)\ne 1$ for all $n\in \BN$.
\end{Theorem}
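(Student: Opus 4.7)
The plan is to identify the sequence $s_n(x,y)$ with iterates of the two-variable word map $\varphi(x,y):=\sigma_y(x)=[y^{-1}xy,x^{-1}]$ from Example \ref{s=r=1}: one checks that $s_n=\varphi_y^{(n-1)}(x)$, so the assertion $s_n\ne 1$ for every $n$ is equivalent to the $\varphi_y$-orbit of $x$ avoiding the forbidden set $I_G=G\times\{1\}$. Since $G$ is finite, such an orbit is automatically periodic, and its existence is exactly what Theorems \ref{condition} and \ref{ScSz} are designed to supply. I would accordingly handle the two families of groups separately.

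For $G=PSL(2,q)$ I will invoke Theorem \ref{condition}. First I would write the trace map $\psi(s,u,t)=(f_1,f_2,t)$ of diagram \eqref{d4} explicitly. Using the commutator trace identity together with the Fricke relation $\tr(AB)+\tr(AB^{-1})=\tr(A)\tr(B)$, set $c:=\tr(y^{-1}xyx^{-1})=\tr([y^{-1},x])=s^2+t^2+u^2-stu-2$; a short calculation then gives $f_1=\tr(\varphi(x,y))=2s^2+c^2-s^2c-2$, while an analogous (more tedious) reduction produces the explicit polynomial $f_2=\tr(\varphi(x,y)y)\in\BZ[s,u,t]$. The fixed-point variety $\Phi=\{f_1=s,\ f_2=u\}\subset\BA^3_{s,u,t}$ is a complete intersection that obviously contains the trivial line $L_1=\{s=2,\ u=t\}$ corresponding to $x=I$; the core of the argument is to exhibit a second, $\BQ$-defined, absolutely irreducible component $W\subset\Phi$ on which $s-2$ does not vanish identically. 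Once $W$ is in hand, Lang--Weil (combined with the Remark following Theorem \ref{condition}) yields an $\BF_q$-rational point $Q=(s_0,u_0,t_0)\in W$ with $s_0\ne\pm 2$ for every sufficiently large $q$; Theorem \ref{rational} lifts $Q$ to a pair $(x,y)\in\tG\times\tG$, and because the fibre $\pi^{-1}(Q)$ is $\tilde\varphi$-invariant and finite, the orbit of $(x,y)$ is periodic with $\tr(x)\equiv s_0\ne\pm 2$, ruling out $x=\pm I$ at every step. The finitely many small $q>3$ excluded by Lang--Weil have to be handled by direct computation (or imported from \cite{BWW}).

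For $G=Sz(q)$ I would apply Theorem \ref{ScSz} with $y=T(0,1)$. In characteristic $2$ one has $y^{-1}=y$, and $y$ commutes with every $T(c,d)$ by property~(i). The hypothesis \eqref{ce1} is not needed in full generality here: in the proof of Lemma \ref{S} it is only invoked for $h=T(c,d)$, for which the automatic word-map covariance $\varphi(hxh^{-1},hyh^{-1})=h\varphi(x,y)h^{-1}$ collapses to $\varphi_y(hxh^{-1})=h\varphi_y(x)h^{-1}$, so diagram \eqref{dd} holds for our $\varphi$ without extra hypotheses. I would then compute the induced map $\psi\colon\BA^2_{a,b}\times\BA^*_k\to\BA^2_{a,b}\times\BA^*_k$ by substituting the Bruhat normal form $x=T(a,b)D(k)wT(c,d)$ into $\varphi(x,y)=[yxy,x^{-1}]$ and simplifying via identities (i)--(vi), and finally locate a $\psi$-invariant subset $V$ (either a fixed point or an invariant curve cut out by a polynomial relation among $a,b,k$). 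Since $U_2$ does not contain the identity, Theorem \ref{ScSz} then delivers the required non-identity periodic point, which translates back to a pair $(x,y)\in G\times G$ with $s_n(x,y)\ne 1$ for all $n$.

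The main obstacle is the geometric analysis in the $PSL(2)$ case: both $f_1-s$ and $f_2-u$ vanish on $L_1$, so after cancelling this common component one must still verify that the residual scheme contains an absolutely irreducible, $\BQ$-defined component of positive dimension on which $s\not\equiv\pm 2$. A resultant or primary-decomposition computation is in principle conclusive, but the degrees are substantial and care is needed to exclude components that degenerate onto $L_1$ modulo small primes. By contrast, the Suzuki computation is essentially bookkeeping through the Bruhat relations; the only creative step there is to guess the correct invariant set $V$.
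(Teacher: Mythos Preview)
Your proposal is correct and follows the paper's approach closely: the paper likewise passes to $e_n(x,y)=s_n(x,y^{-1})$, computes $f_1,f_2$ exactly as you outline, and then shows (via a Magma primary decomposition, together with the elementary Lemma~\ref{irr}) that $\Phi\setminus L_1$ splits into two absolutely irreducible genus-one components $W_1,W_2$, with small $q$ handled directly. For $Sz(q)$ the paper does precisely the Bruhat bookkeeping you describe on the slice $\{a=0\}$ and finds that $\{a=0,\ b\ne 0,1\}$ is invariant under the \emph{second} iterate of $\psi$ (not $\psi$ itself), which is enough for the conclusion.
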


We want to give another proof of \thmref{tt} using the trace map and
other geometric considerations.

For technical reasons we will change notation and consider a
sequence $e_n(x,y)$ which differs from $s_n(x,y) $ only by replacing
$y$ with $y^{-1}.$ Since in \cite{BWW} the element $y$ was supposed
to be an involution, this does not matter. We define
$$e_1= x, \quad e_2=[ yxy^{-1},x^{-1}], \dots ,
 e_n=[ye_{n-1}y^{-1},e_{n-1}^{-1}],\dots ,$$
i.e. in this example
$$\varphi(x,y)=\varphi_y(x)=[yxy^{-1},x^{-1}]$$
(see \secref{psl}).

\medskip

{\bf Case of PSL}
As explained in Remark \ref{SL-PSL}, we can freely apply the results 
of \subsecref{2vm} obtained for $\tG =SL(2,q)$ to the case $G=PSL(2,q)$. 

We are going to compute the variety $\Phi$ of fixed points of the
corresponding trace map $\psi\colon\mathbb A^3\to\mathbb A^3$ (see
diagram (\ref{d4})). We maintain the notation of \subsecref{2vm}. In
particular, we denote $s=\tr (x),$\ $u=\tr (xy),$ \ $t=\tr (y),$ and
$r=u^2+s^2+t^2-ust.$ Then (see \cite[Lemma 5.2.4]{CMS}),
$$f_1(s,u,t)=2s^2+(\tr(yxy^{-1}x^{-1}))^2-s^2(\tr(yxy^{-1}x^{-1}))-2,$$
$$\tr[y,x]=r-2.$$

Direct computations %(see \cite{BGK}, \subsecref{subsec:BWW})
give
\begin{equation}\label{1}
f_1(s,u,t)=2s^2+(r-2)^2-s^2(r-2)-2=s^2(4-r)+r^2-4r+2=(r-4)(r-s^2)+2,
\end{equation}
%\end{Example}
\begin{equation}\label{7}
f_2(s,u,t)=f_1(s,u,t)\cdot t+s(st-u)(r-4)-t(r-3).
\end{equation}

The variety $\Phi\subset \mathbb A^4$ is now defined by the following system:
\begin{equation}\label{w}
\Phi=\left\{ \begin{aligned} s=&(r-4)(r-s^2)+2,\\
                                    u=&st+s(st-u)(r-4)-t(r-3),\\
                                    r=&u^2+t^2+s^2-ust.
\end{aligned}\right\}\end{equation}
This curve contains a  trivial component $L_1$:
$$s=2,\qquad r=4,\qquad u=t.$$
To eliminate this component, we consider a curve $\tilde \Phi$ in
the space $\mathbb A^5$ with coordinates $(s,u,t,r,z)$  which is
isomorphic to $\Phi\setminus L_1$:
\begin{equation}\label{8}\tilde \Phi=\left.\begin{cases}  r=u^2+t^2+s^2-ust,\\
s=(r-4)(r-s^2)+2,\\
u=st+s(st-u)(r-4)-t(r-3),\\
z(r-4)=1. \end{cases}\right\}
\end{equation}

\begin{Lemma}\label{irr}
The plane curve $A\subset\mathbb A^2$ given by the equation
$(s-2)=(r-4)(r-s^2)$ is %isomorphic over $\overline \BF_q$ to 
a smooth irreducible genus $1$ curve with two punctures.
\end{Lemma}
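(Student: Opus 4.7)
The plan is to view $A$ as a double cover of $\mathbb{A}^1_s$ via the projection $(s,r)\mapsto s$. Rewriting the defining equation $(s-2)=(r-4)(r-s^2)$ as
$$F(s,r)\ :=\ r^2-(s^2+4)r+(4s^2-s+2)\ =\ 0$$
exhibits $F$ as monic and quadratic in $r$. Completing the square by the substitution $y:=2r-(s^2+4)$ (over any base field of characteristic $\neq 2$) transforms $F=0$ into the hyperelliptic form
$$y^2\ =\ \Delta(s),\qquad \Delta(s)\ :=\ (s^2+4)^2-4(4s^2-s+2)\ =\ s^4-8s^2+4s+8.$$
Thus the whole lemma will follow from the structure of the polynomial $\Delta(s)$.

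The key computation I would carry out is the factorization
$$\Delta(s)\ =\ (s-2)\,(s^3+2s^2-4s-4).$$
The cubic factor has no rational root (a routine check of the divisors of $4$), so it is irreducible over $\BQ$, hence separable; and substituting $s=2$ into it gives $4\neq 0$. Therefore $\Delta(s)$ has four distinct roots over $\ov{\BQ}$, and moreover $\Delta$ is not a square in $\BQ(s)$ since the factor $s-2$ appears with odd multiplicity.

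These two properties of $\Delta$ give the full statement. Non-squareness of $\Delta$ in $\BQ(s)$ shows that the quadratic $F(s,r)\in\BQ(s)[r]$ is irreducible, hence $A$ is geometrically irreducible. Simplicity of the roots of $\Delta$ makes $y^2=\Delta(s)$ a smooth affine hyperelliptic curve, and since the coordinate change is an isomorphism, $A$ itself is smooth. For the genus, the smooth projective model $\bar A\to \BP^1_s$ is a degree-$2$ cover with simple ramification over the four roots of $\Delta$ and, since $\deg\Delta=4$ is even, no ramification over $s=\infty$; Riemann--Hurwitz yields
$$2g(\bar A)-2\ =\ 2(2\cdot 0-2)+4\cdot 1\ =\ 0,$$
so $g(\bar A)=1$. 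Finally, unramifiedness at $\infty$ means that $\bar A$ has exactly two points above $s=\infty$; by Vieta's formulas $r_++r_-=s^2+4$ and $r_+r_-=4s^2-s+2$, so the two branches satisfy $r_-\to 4$ and $r_+\sim s^2$ as $s\to\infty$, confirming the two points are distinct. These are precisely the two punctures of $A\subset\mathbb{A}^2$, completing the proof. The only mildly non-routine step is the factorization and separability of $\Delta(s)$; once $\Delta$ is in hand, all geometric conclusions are standard.
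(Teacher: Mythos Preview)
Your proof is correct. The approach is close in spirit to the paper's but more explicit. The paper verifies directly that the projective closure $\tilde A\subset\mathbb P^2$ is smooth (stated as a computation left to the reader), deduces irreducibility from the general fact that a smooth projective plane curve is irreducible, and then invokes Hurwitz on the double cover of $\mathbb P^1$. You instead pass to the hyperelliptic model $y^2=\Delta(s)$ and read everything off from the factorization $\Delta(s)=(s-2)(s^3+2s^2-4s-4)$: separability of $\Delta$ gives smoothness, non-squareness gives irreducibility, and the even degree of $\Delta$ gives the two points at infinity. Your route has the advantage that every claim is reduced to an explicit polynomial computation, whereas the paper's route is shorter but hides the smoothness check. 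Two minor remarks: (i) your completing-the-square step needs characteristic $\neq 2$, which you note; this is harmless for the lemma as used (cf.\ the Remark after Theorem~3.5), though the paper's direct smoothness check in $\mathbb P^2$ would go through uniformly; (ii) for \emph{geometric} irreducibility you need $\Delta$ to be a non-square in $\ov\BQ(s)$, not just $\BQ(s)$---but this is immediate from the simple factor $(s-2)$, which you already single out.
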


\begin{proof}
Assume the ground field is algebraically closed. Let $\tilde A$ be the closure of $A$ in the projective
space.
One can check %(see \cite{BGK}, \subsecref{subsec:BWW})
that $\tilde A$
has no singular points.

As a  plane smooth curve,  $\tilde A$ is  irreducible. Moreover, it is a
double cover of $\mathbb P^1$ and by Hurwitz's formula has  genus 1.
\end{proof}

Magma computations %(see \cite{BGK},  Appendix A4) 
show that the curve $\tilde
\Phi$ has two components: 
\begin{equation}\label{21}
W_1=\left.\begin{cases} z+t+s=0,\\ u-t-s+r-1=0,\\ ts-2t-2s+r=0,\\
tr-4t+sr-4s+1=0,\\ s^2r-4s^2+s-r^2+4r-2=0,\end{cases}\right\}\end{equation}
\begin{equation}\label{22}
W_2=\left.\begin{cases} z-t+s=0,\\ u-t+s-r+1=0,\\ ts-2t+2s-r=0,\\
tr-4t-sr+4s-1=0,\\ s^2r-4s^2+s-r^2+4r-2=0,\end{cases}\right\}\end{equation}
both defined over the ground field and isomorphic to
$A\setminus\{r=4,s=2\},$ i.e. to a genus 1 irreducible curve with 3
punctures.  Therefore both
$W_1$ and $W_2$ are absolutely irreducible.

From Theorem \ref{condition} it follows that if $q$ is big enough,
then there exists a pair $(x,y)\in PSL(2,q)\times
PSL(2,q)$ such that $x$ is a periodic point of the map
$\varphi_y.$

\begin{Remark}  Since $W_1, W_2$ are curves of genus 1 with 3
punctures, they contain $\BF_q$-points for all $q\ge
7.$ Since each fibre contains a rational curve with at most two
punctures, $q$ ``big enough''  means $q\ge 7$ in this example.
Small fields have been handled in a straightforward manner. 
\end{Remark}

{\bf Case of $Sz(2^n)$}

We keep the notation of \secref{sec:Suzuki}. We have to show that
the map $\varphi _y$ meets all the conditions of \thmref{ScSz}.
Condition \eqref{ce1} is obviously satisfied. Let us find an
invariant set $V$ of the map $\psi$ (see diagram (\ref{dd})). 
A direct computation of $f(0,b,k),$ \ $g(0,b,k)$ and
$h(0,b,k)$ for $x=T(0,b)D(k)w$ and  $y=T(0,1)$
%(see \cite{BGK}, \subsecref{subsec:BWW})
gives 
$$k(0,b,k)=k^2\beta^{\frac{2s}{s+2}}=k^2(b+1)^{\frac{2s}{s+2}}\cdot
k^{\frac{(1+s)2s}{s+2}}=k^4(b+1)^{\frac{2s}{s+2}},$$
$$f(0,b,k)=0,$$
\begin{align*}
g(0,b,k)&=(\beta^{1-s}k^{-1})^{1+s}+(\beta+1/\beta)k^{-(1+s)}\\
&=k^{-(1+s)}(\beta^{1-s^2}+\beta+1/\beta)=k^{-(1+s)}\beta=b+1.\end{align*}
Thus for $b\ne 0,1$ the function $g$ has period 2.

After the second iteration, we get
$$f(f(0,b,k),g(0,b,k),h(0,b,k))=0,$$
$$g(f(0,b,k),g(0,b,k),h(0,b,k))=b,$$
$$h(f(0,b,k),g(0,b,k),h(0,b,k))=k^{16}(b+1)^{\frac{8s}{s+2}}b^{\frac{2s}{s+2}}.$$
Therefore, the set $V=\{x\in U_2: \pi_1(x)=0,$\ $\pi_2(x)=b\ne
0,1\}$ is invariant under the second iteration of $\varphi_y$ and
does not contain 1.

Theorem \ref{tt} is proved.

\subsection{Three-variable sequence}\label{un}

In this subsection we consider another sequence characterizing
solvable groups which was introduced in \cite{BGGKPP1},
\cite{BGGKPP2}:

$$u_0=x^{-2}y^{-1}x,\dots ,u_{n+1}=[xu_nx^{-1},yu_ny^{-1}],\dots$$

In the notation of \subsecref{3vm} we have
$$v(x,u,y)=[xux^{-1},yuy^{-1}], \ w(x,y)=x^{-2}y^{-1}x,$$
and $\ov a$ stands for the point $\ov
a=(a_1,a_2,a_3,a_{12},a_{13},a_{23},a_{123}) \in \BA^7.$

We need some additional notation:
$$a_{213}=\tr(yxu)= a_{12} a_3+ a_{13} a_2+a_{23} a_1-a_1a_2a_3,$$
$$b_{12}=\tr(x^{-1}y)=a_1a_2- a_{12},$$
$$b_{13}=\tr(x^{-1}u)=a_1a_3- a_{13},$$
$$b_{23}=\tr(y^{-1}u)=a_2a_3- a_{23},$$
$$b_{123}=\tr(x^{-1}yu)=a_1a_{23}- a_{123},$$
$$b_{213}=\tr(y^{-1}xu)=a_2a_{13}- a_{213},$$
$$c_{12}=\tr(xy^{2}) =a_{12}a_2- a_1,$$
$$c_{m12}=\tr(x^{-1}y^{2}) =b_{12}a_2-a_1,$$
$$d_{12}=\tr(x^{2}y) =a_{12}a_1- a_2,$$
$$d_{m12}=\tr(x^{-2}y) =b_{12}a_1- a_2,$$
$$g_{12}=\tr(xu^{2}) =a_{13}a_3- a_1,$$
$$f_{m23}=\tr(u^{2}y^{-1}) =b_{23}a_3- a_2,$$
$$p_1=\tr(ux^{-1}yuy^{-1}x)=a_3b_{12}b_{123}-b_{12}^{2}-b_{123}^{2}+2,$$
$$p_2=b_{23}p_1-b_{13}\{a_3b_{213}-b_{12}\}+ a_1b_{213}-b_{23},$$
$$p_3=b_{12}(a_2p_1-b_{13}b_{213}+d_{m12})-b_{213}a_{23}+c_{m12},$$
$$p_4=b_{12}^{2}+a_3^2+b_{123}^2-b_{12}a_3b_{123}-2,$$
$$p_5=b_{12}^{2}+a_3^2+b_{213}^2-b_{12}a_3b_{213}-2,$$
$$l_1(\ov a)= 2a_3^2+p_1^2-p_1a_3^2-2,$$
$$l_2(\ov a)=a_1l_1-b_{213}p_2+p_3,$$
$$\begin{aligned}
&l_3(\ov{a})=b_{213}(b_{13}p_1-(b_{123}f_{m23}-b_{12}b_{23}+b_{13}))-\\
&-b_{12}(p_1a_1-b_{123}b_{23}+c_{m12})+a_{13}b_{123}-d_{m12}.\end{aligned}$$

A direct computation shows that
\begin{equation}\label{ta1}
\tr([xux^{-1},yuy^{-1}])=l_1(\ov a),
\end{equation}
\begin{equation}\label{ta2}
\tr([xux^{-1},yuy^{-1}]x)=l_2(\ov a),
\end{equation}
\begin{equation}\label{ta3}
\tr([xux^{-1},yuy^{-1}]y)=l_3(\ov a).
\end{equation}

In the following paragraph we compute
$$\tr([xux^{-1},yuy^{-1}]xy)=l_4(\ov a):$$

$Y=b_{13}b_{213}-d_{m12}$,
$p_6=b_{12}^2+a_3^2+b_{123}^2-b_{12}a_3b_{123}-2$, $G=
b_{213}b_{12}a_3-b_{12}^2- b_{213}^2+2$, $U=a_2G-Y$,
$V=b_{213}a_{23}-c_{m12}$, $E=b_{12}U-V$, $Q=b_{213}a_1-b_{23}$,
$R=a_3b_{213}-b_{12}$, $H=b_{13}R-Q$, $D=b_{23}G-H$, $B=b_{123}D-E$,
$C=b_{12}(p_6-1)$, $A=a_2B-C$, $l_4=a_{12}l_1-A.$

Furthermore,
$$\tr(u_0)=\tr(x^{-2}y^{-1}x)=\tr(x^{-1}y^{-1})=a_{12},$$
$$\tr(u_0x)=\tr(x^{-2}y^{-1}x^2)=\tr(y)=a_{2},$$
$$\tr(u_0y)=\tr(x^{-2}y^{-1}xy)=\tr(x)\tr([x,y])-\tr(y^{-1}xy)=a_1(a_1^2+a_2^2+a_{12}^2-a_1a_2a_{12}-3),$$
$$\tr(u_0xy)=\tr(x^{-2}y^{-1}x^2y)=\tr([x^2,y])=(a_1-2)^2+a_2^2+d_{12}^2-(a_1-2)a_2d_{12}-2.$$
Therefore the variety $C=\Phi\bigcap W(w)$ is defined by equation
\eqref{hha1} and the following system of equations:
\begin{equation}\label{ta4}
l_1(\ov a)=a_3,
\end{equation}
\begin{equation}\label{ta5}
l_2(\ov a)=a_{13},
\end{equation}
\begin{equation}\label{ta6}
l_3(\ov a)=a_{23},
\end{equation}
\begin{equation}\label{ta7}
l_4(\ov a)=a_{123},
\end{equation}
\begin{equation}\label{ta8}
a_3=a_{12},
\end{equation}
\begin{equation}\label{ta9}
a_{13}=a_{2},
\end{equation}
\begin{equation}\label{ta10}
a_{23}=a_1(a_1^2+a_2^2+a_{12}^2-a_1a_2a_{12}-3),
\end{equation}
\begin{equation}\label{ta11}
a_{123}=(a_1-2)^2+a_2^2+d_{12}^2-(a_1-2)a_2d_{12}-2.
\end{equation}
Magma computations show that $C$ contains two components, $C_1$ and
$\Phi$: $C_1$ corresponds to the trivial solution $u_0=1$, $x=y^{-1}$, and
$\Phi$  is an irreducible curve intersecting the set $V$ (see Theorem \ref{s3vm} at a
finite number of points (at most $31$ as MAGMA computations give). %see \cite{BGK}, Appendix A5. 
Moreover,
this curve is a projection of the solution of the equation $u_0=u_1$
computed in \cite{BGGKPP2}.

\subsection{A new sequence}\label{y2}

In this subsection we produce a new sequence characterizing finite
solvable groups. It is a modification of the sequence $e_n$
considered in \subsecref{subsec:BWW}. We keep the notation of that
subsection.

Let $\theta_n(x,y)=s_n(x,y^2)$.
Denote $\theta(x,y)=\varphi(x,y^2),$ i.e.
$$\theta_y(x)=[y^2xy^{-2},x^{-1}].$$

\begin{Theorem} \label{th:new}
The map $\theta(x,y)\colon SL(2,q)\to SL(2,q)$ has nontrivial 
periodic points for all $q$. 
\end{Theorem}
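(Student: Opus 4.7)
The plan is to parallel the trace-map analysis of \subsecref{subsec:BWW}. Since $\theta_y(x) = \varphi_{y^2}(x)$ with $\varphi$ the Bray--Wilson--Wilson map, the same strategy applies with the trace data of $y^2$ substituted for that of $y$. Writing $s = \tr(x)$, $u = \tr(xy)$, $t = \tr(y)$, one has $\tr(y^2) = t^2 - 2$ and $\tr(xy^2) = ut - s$, so the trace map $\tilde\psi$ associated to $\theta$ takes the form $\tilde\psi(s,u,t) = (\tilde f_1(s,u,t), \tilde f_2(s,u,t), t)$ with
$$\tilde f_1(s,u,t) = f_1(s,\, ut - s,\, t^2 - 2)$$
for $f_1$ as in \eqref{1}; the second coordinate $\tilde f_2 = \tr(\theta_y(x)\,y)$ is a similar but longer polynomial that can be written down directly.

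First I would exhibit the variety $\tilde\Phi \subset \BA^3_{s,u,t}$ of fixed points of $\tilde\psi$. As in \lemref{positive}, the trivial line $L_1 = \{s = 2,\; u = t\}$ is a component, and the remainder $\tilde\Phi \setminus L_1$ can be decomposed via Magma. The substitution above shows that the map $(s, u, t) \mapsto (s,\, ut - s,\, t^2 - 2)$ sends fixed points of $\tilde\psi$ to fixed points of $\psi$, so each irreducible component of $\tilde\Phi \setminus L_1$ covers one of the curves $W_1$, $W_2$ of \eqref{21}--\eqref{22} via a map whose fibre is generically of degree two (coming from $t \mapsto t^2 - 2$). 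Hurwitz's formula then bounds the genus of each component by a small constant, and the punctures of $W_i$ lift to at least as many punctures upstairs.

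Once the Magma decomposition is carried out and one verifies the existence of an absolutely irreducible $\BZ$-defined component $\tilde W \subset \tilde\Phi$ of positive dimension not contained in $V$, \thmref{condition} produces a pair $(x,y) \in SL(2,q)^2$ with $x \ne 1$ and $x$ periodic under $\theta_y$, for all $q$ above an explicit bound obtained by combining the Lang--Weil estimate \cite{LW} with the Weil bound for curves. The finitely many small values of $q$ left uncovered by this argument are then handled by direct computer search, as in \subsecref{subsec:BWW}. The main obstacle lies precisely in the explicit decomposition step: the polynomials defining $\tilde\Phi$ have considerably higher degree than those of \eqref{21}--\eqref{22}, so confirming absolute irreducibility of at least one component, controlling its geometry, and ensuring that it retains enough $\BF_q$-rational punctures uniformly in $q$ demand careful computer-algebra work — and may force a case-by-case verification for the smallest $q$.
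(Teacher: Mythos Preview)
Your proposal is correct and follows essentially the same route as the paper. The paper makes the structure slightly more transparent by writing the three-level commutative diagram
\[
SL(2)^2 \xrightarrow{\ \pi\ } \BA^3_{s,u_1,t_1} \xrightarrow{\ \varkappa\ } \BA^3_{s,u,t},
\]
with $\varkappa(s,u_1,t_1)=(s,u_1t_1-s,t_1^2-2)$ exactly as you describe, and then, instead of decomposing the fixed locus of $\psi_\theta$ from scratch, simply pulls back the already-known component $W_2$ via $\varkappa$ and checks directly that $\varkappa^{-1}(W_2)$ is a ramified double cover of $W_2$, hence absolutely irreducible, of genus $\le 2$ with $\le 5$ punctures (so $q\ge 13$ suffices, small $q$ by hand). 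Your plan to Magma-decompose $\tilde\Phi$ and then observe that its components cover $W_1,W_2$ amounts to the same computation carried out in the other direction; the paper's pullback viewpoint just spares you from ever writing down $\tilde f_2$.
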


\begin{proof}
For a pair $(x,y)\in SL(2,q)$, let $$s=\tr (x), \quad
t_1=\tr (y),\quad u_1=\tr (xy), \quad t=\tr (y^2)=t_1^2-2,\quad
\quad u=\tr (xy^2)=u_1t_1-s.$$

Consider the following maps:
$$\varkappa: \mathbb
A_{s,u_1,t_1}^3\longrightarrow \mathbb A_{s,u,t}^3,\qquad
\varkappa(s,u_1,t_1)=(s,u_1t_1-s,t_1^2-2);$$
$$\psi\colon\mathbb A_{s,u,t}^3\longrightarrow \mathbb
A_{s,u,t}^3,\qquad \psi(s,u,t)=(f_1(s,u,t),f_2(s,u,t),t),$$ where
the functions $f_1 $ and $f_2$ are defined in \eqref{1} and
\eqref{7}, respectively;
$$\psi_\theta\colon \mathbb A_{s,u_1,t_1}^3\to \mathbb A_{s,u_1,t_1}^3,$$
$$\psi_\theta(s,u_1,t_1)=(\tr\theta_y(x),\tr(\theta_y(x)\cdot
y),\tr y).$$

We obtain the following commutative diagram:
\begin{equation}\begin{CD}
{SL}(2)\times{SL}(2)@>(\theta ,\idd)>>SL(2)\times{SL}(2)\\
@V\pi VV @V\pi VV \\
\mathbb A_{s,u_1,t_1}^3@> \psi_\theta  >>\mathbb A_{s,u_1,t_1}^3\\
@V\varkappa VV @V\varkappa VV \\
\mathbb A_{s,u,t}^3@>   \psi >>\mathbb A_{s,u,t}^3
\end{CD}
\label{3store}\end{equation}

As shown above, the variety $\Phi$ of fixed points of $\psi$ has
three irreducible $\mathbb F_q$-components $L_1$, $W_1$, $W_2$, all
absolutely irreducible for any $q.$

\begin{lemma}\label{lem:2}
The curve $Z_2:=\varkappa^{-1}(W_2)$ is absolutely irreducible.
\end{lemma}

\begin{proof}%[Proof of \lemref{2}].
Consider the curve $\ov B$ defined in $\BP^3$ with homogeneous
coordinates $(\tilde s:\tilde r:\tilde t:\tilde w)$ by the
equations:
\begin{equation}\label{27}
 \tilde s\tilde t-2\tilde t\tilde w+2\tilde s\tilde w-
\tilde r\tilde w=0,\end{equation}
\begin{equation}\label{28}\tilde t\tilde r-4\tilde t\tilde w-\tilde s\tilde r+4\tilde s\tilde w-
\tilde w^2=0,\end{equation}
\begin{equation}\label{29}(\tilde s-2\tilde w)\tilde w^2=(\tilde r\tilde w-\tilde s^2)(\tilde r-4
\tilde w).\end{equation}
Since equations \eqref{22} are linear in $u$ and $z$, the curve $\ov
B$ is isomorphic (or at least birational and one-to-one) to the
projective closure of $W_2$.

The curve $\ov C\subset \BP^4$, isomorphic  (or at least  birational
and one-to-one) to the closure of $Z_2$, can be defined in $\BP^4$
with coordinates $(\tilde t_1:\tilde s:\tilde r:\tilde t:\tilde w)$
by the same system \eqref{27}, \eqref{28}, \eqref{29}, together with
the additional equation
\begin{equation}\label{30}
  \tilde t_1^2=\tilde w(\tilde t+2\tilde w).
\end{equation}
The projection $\tau \colon \ov C\to\ov B,$
$$\tau(\tilde t_1:\tilde s:\tilde r:\tilde t:\tilde w)=(\tilde s:\tilde r:\tilde t:\tilde w),$$
is a morphism  which represents
$\ov C$ as  a ramified double
cover of $\ov B$ (this can be checked by a direct computation). %(see \cite{BGK}, \subsecref{y2}). 
Since  $\ov B$ is absolutely irreducible, so is
$\ov C.$
\end{proof}

From diagram \eqref{3store} it follows that at least the second
iteration of $\psi_{\theta}$ has a nontrivial absolutely irreducible
component in the variety of its fixed points. Formula \eqref{30}
shows that $\ov C$ is a double cover of $\ov B$ with at most three
ramification points (all  at infinity). It follows that the genus is
at most $2.$ Since $B$ has  $3$ punctures and over at least one of
them  $\ov C$ is ramified,  $C$  has at most 5 punctures. Therefore
for $q\ge 13$ there are points in $Z_2$  rational over $\BF_q.$

The case $q<13$ was checked by straightforward computations.
\end{proof}

\begin{cor}\label{newseq}
A finite group $G$ is solvable if and only if
$$(\exists   n\in\BN )\ (\forall (x,y)\in G \times G)\ \theta_n(x,y)=1.$$
\end{cor}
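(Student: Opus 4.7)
The plan is to combine the recursive structure of the sequence $\theta_n$ with Theorem \ref{th:new} and the reduction to Thompson's classification of minimal simple non-solvable groups recalled in Section \ref{np}.

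For the easy direction, I would show by induction on $n$ that $s_n(x,z)\in \langle x,z\rangle^{(n-1)}$: writing the BWW recursion as $s_1=x$, $s_n=[z^{-1}s_{n-1}z,s_{n-1}^{-1}]$, normality of derived subgroups gives $z^{-1}s_{n-1}z\in \langle x,z\rangle^{(n-2)}$, and hence $s_n$ lies in $[\langle x,z\rangle^{(n-2)},\langle x,z\rangle^{(n-2)}]=\langle x,z\rangle^{(n-1)}$. Specializing to $z=y^2$ yields $\theta_n(x,y)=s_n(x,y^2)\in G^{(n-1)}$, so if $G$ is solvable of derived length $d$, then $\theta_{d+1}\equiv 1$ in $G$.

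For the nontrivial direction I would argue by contrapositive: assuming $G$ is not solvable, produce $(x,y)\in G^2$ with $\theta_n(x,y)\ne 1$ for every $n$. A standard composition-series argument reduces to the case where $G$ is a minimal simple non-solvable group, hence one of the five types listed in Section \ref{np}. For each $PSL(2,q)$-case Theorem \ref{th:new} delivers $y\in SL(2,q)$ together with a non-identity periodic point $x$ of the self-map $\theta_y$. Since $\theta_y(1)=[1,1]=1$, the identity is a fixed point, so the finite cyclic orbit of $x$ avoids $1$ entirely and $\theta_n(x,y)\ne 1$ for all $n$; by Remark \ref{SL-PSL} the conclusion descends to $PSL(2,q)$. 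For $Sz(q)$ I would exploit $\theta_n(x,y)=s_n(x,y^2)$: it suffices to exhibit $y\in Sz(q)$ whose square has the form $T(0,t)$ with $t\ne 0$, because then the Suzuki part of Theorem \ref{tt}, combined with Remark \ref{rmk}, supplies $x$ with $s_n(x,y^2)\ne 1$ for all $n$. The multiplication rule $T(a,b)T(c,d)=T(a+c,ac^s+b+d)$ in characteristic $2$ gives $T(a,0)^2=T(0,a^{s+1})$, so any $y=T(a,0)$ with $a\ne 0$ fits. The residual case $G=PSL(3,3)$ can be handled by a direct computer check.

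The main obstacle is the Suzuki case, since Theorem \ref{th:new} is stated only for $SL(2,q)$. The key observation needed is that the elements $T(0,t)$ used in the BWW Suzuki argument are in fact squares in $Sz(q)$, so that the characterization via $\theta_n$ inherits its non-triviality from the BWW characterization via $s_n$; the small computation above confirms this and completes the reduction.
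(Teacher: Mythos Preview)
Your proposal is correct and follows essentially the same route as the paper: the solvable direction via the derived series, the reduction to Thompson's list, Theorem~\ref{th:new} for the $PSL(2,q)$ cases, the observation that the Suzuki parameter $T(0,t)$ is a square (the paper uses $T(1,1)^2=T(0,1)$, you use $T(a,0)^2=T(0,a^{s+1})$, which are equivalent), and a direct check for $PSL(3,3)$. The only cosmetic difference is that the paper records explicit matrices for $PSL(3,3)$ whereas you defer to a computer search.
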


\begin{proof}
We argue as in the proof of Theorem \ref{th:BWW}. 
Theorem \ref{th:new} settles the $PSL(2,q)$ case. In the case
$Sz(2^n)$ no new proof is needed because $T(0,1)=T(1,1)^2.$ Periodic
points of $\vp_y$ with $y=T(0,1)$ are periodic points of
$\theta_{y_1}$ with $y_1=T(1,1).$ The case $G=PSL(3,3)$ is straightforward: 
for the matrices 
$$
x=\left(\begin{matrix} 
2 & 0 & 0 \\
0 & 0 & 1 \\
0 & 1 & 2
\end{matrix}
\right),
\quad 
y=\left(\begin{matrix} 
0 & 2 & 2 \\
1 & 2 & 1 \\
0 & 2 & 0
\end{matrix}
\right)
$$
we have $s_1(x,y)=s_4(x,y)$. 
%Theorem \ref{newseq} is proved.
\end{proof}

\begin{Remark}
The proof of \cite{BWW} does not work for the sequence from Theorem
\ref{newseq}. It is proved in \cite{BWW} that for $$y_0=\left(\begin{matrix} 0 & -1\\
1& 0\end{matrix}\right)$$ there exists a periodic point of
$\varphi_{y_0}$ in $SL(2,q)$ for every $q$. But $y_0\ne z^2$
in $SL(2,q)$ if 2 is not a square in $\mathbb F_q$.
\end{Remark}

\begin{Remark} We believe that the statement of Theorem
\ref{newseq} remains true if one takes $y^n$, with any $n\ge2$,
instead of $y^2$ (at least for even $n$) but this requires more subtle analysis.
\end{Remark}

%%%%%%%%%%%%%%%%%%%%%%%%%%%%%%%%%%%%
%%%%%%%%%%%%%%%%%%%%%%%%%%%%%%%%%%%

\subsection{Commutator}\label{Com}

In the following example we want to show how useful the trace 
method can be. We present a very simple proof of the following
statement (which is a very special case of a theorem of Borel
\cite{Bo}, see also \cite{La}):

\begin{Example}\label{comm}
Let $G=SL(2,q)$. Then the map $F\colon G\times G\to G$ defined
by $F(x,y)=[x,y]$ is a dominant morphism of the underlying algebraic
$\BF_q$-varieties.
\end{Example}

%We consider the map $\Phi:G\times G\to G\times G$
%defined as $\Phi(x,y)=(F(x,y),y).$

%In the notation of \subsecref{2vm}, consider the corresponding map
%$\psi\colon\BA^3_{s,u,t}\to\BA^1$: if $\tr(x)=s, \tr(y)=t,
%\tr(xy)=u,$ then
%$$\psi(s,u,t)=\tr(F(x,y))=s^2+t^2+u^2-ust-2.$$
%For any $a\in G$, consider the inverse image
%$$\G_a:=\psi^{-1}(a)=\{(s,u,t)\in \BA^3_{s,u,t}:s^2+t^2+u^2-ust-2-a=0\}$$

%On setting $u=0$, we arrive at the equation $s^2+t^2=2$ which 
%has a solution in any finite field (cf. the proof of Theorem \ref{rational}).
%\end{proof}

\begin{proof}
In the notation of \subsecref{2vm}, consider the corresponding map
$\psi\colon\BA^3_{s,u,t}\to\BA^3$: if $\tr(x)=s, \tr(y)=t,
\tr(xy)=u$, then
$$\psi(s,u,t)=(f_1(s,u,t), f_2(s,u,t), t).$$
Here  $f_1(s,u,t)=\tr(F(x,y))=s^2+t^2+u^2-ust-2, f_2(s,u,t)=t.$

Let $z\in G$, and suppose that $a=\tr(z)\ne\pm2$. We want to show  that 
there exist $x,y\in G$ with $[x,y]=z$. 

For any $t\in\BF_q$ consider the inverse image
$\G_{a,t,t}:=\psi^{-1}(a,t,t)\subset \BA^3_{s,u,t}$. We have 
%\begin{equation}\label{GTT}
$$\G_{a,t,t}=\{(s,u,t)\in \BA^3: \ s^2+t^2+u^2-ust-2-a=
0\}.$$
%\end{equation}

For a fixed value $t_0\ne \pm 2,$  this is a quadratic equation 
in $(s,u)$ which 
has a solution $(s_0,u_0) $ over every finite field 
(cf. the proof of Theorem \ref{rational}).
Thus we have a point $Q:=(s_0,u_0,t_0)\in\G_{a,t_0,t_0}$. 

By \thmref{rational}, $\pi^{-1}(Q)\ne\emptyset$, so take 
$(x,y)\in \pi^{-1}(Q)$. We have $\tr(F(x,y))=a=\tr (z)$.
If  $a\ne\pm 2$ 
(i.e. $z$ is semisimple), $F(x,y)$ is conjugate to $z$, i.e. 
$[x,y]=wzw^{-1}$. We get $[w^{-1}xw,w^{-1}yw]=z$, as required.   
\end{proof}

The map $F\colon G\times G\to G$ provides a dynamical system on $SL(2,q)\times SL(2,q)$
with $\tilde\phi(x,y)=([x,y],y)$. It corresponds to the Engel sequence
$e_1=[x,y]$,\dots , $e_{n+1}=[e_n,y],\dots$

Let us show that this dynamical system has nontrivial periodic points  
for every $q$. The cases $q=2,3$ are treated by a direct computation, so assume $q>3$.   
In view of \thmref{rational}, 
it is sufficient to find a fixed point  of
 the trace map 
$$\psi(s,u,t)=(s^2+t^2+u^2-ust-2, t,t)$$
with $s^2\ne 4$, $t^2\ne 4$.  
The point $(s,t,t)$ is fixed if
%\begin{equation}\label{engel}
$s= s^2+2t^2-st^2-2.$%\end{equation}
If $q=2^n$, then any pair $(s=1+t^2,t)$ is a needed  solution of this equation.  
If $q\ne 2^n, $
then for a fixed $t$ we get 
% $$s_{1,2}=\frac{(t^2+1)\pm \sqrt{(t^2+1)^2-8t^2+8}}{2}=\frac{(t^2+1)\pm \sqrt{(t^2-3)^2}}{2}.$$
$s_1=2$ (forbidden), $s_2=t^2-1$.   
Thus, any pair $(t^2-1, t)$,  $t^2\ne -1,3,4$ provides a needed fixed point.

\section{Possible generalizations} \label{final}

%\vskip 0.3 cm

Here we present some more general problems arising from
concrete calculations of the preceding sections. In Subsection
\ref{subsec:rpds} we consider AG systems introduced in \secref{intro} making this 
notion more precise. In particular, we want to distinguish between the cases 
when the underlying geometric object is defined over a global field or its 
ring of integers. We define residually periodic dynamical systems, propose 
some relevant conjectures and give several examples. In \subsecref{subsec:vds}
we discuss in more detail verbal dynamical systems defined in the introduction. 
By combining the notions of AG dynamical system and verbal dynamical system, 
we define systems carrying both structures.

%\vskip 0.3 cm

\subsection{Residually periodic dynamical systems}
\label{subsec:rpds}

We start with AG dynamical systems. 

Let $K$ be a global field, and let $\SO$ stand for the ring of
integers in $K.$

\begin{Definition}\label{D}
A triple $D=(X, V, \vp)$ is called a $K$-dynamical system if
\begin{itemize}
\item $X$ is  an algebraic $K$-variety;
\item $\vp\colon X\to X$ is a dominant $K$-morphism;
\item $V\subset X(K)$ is a $\vp$-invariant subset.
\end{itemize}
\end{Definition}

%\vskip 0.3 cm

\begin {Definition}\label{sD}
A triple $\SD = (\SX, \SV, \Phi)$ is called an $\SO$-dynamical
system if
\begin{itemize}
\item $\SX$ is  an $\SO$-scheme of finite type;
\item  $\Phi\colon\SX\to \SX$ is a dominant $\SO$-morphism;
\item $\SV\subset \SX(\SO)$ is a $\Phi$-invariant subset.
\end{itemize}
\end{Definition}

We say that an $\SO$-dynamical system $\SD = (\SX, \SV, \Phi)$ is an
integral model of $D=(X, V, \vp)$ if
\begin{itemize}
\item $\SX\times_{\SO}K=X;$
\item the restriction of $\Phi$ to the generic fibre coincides with $\vp$;
\item  $R(\SV)=V,$ where $R\colon\SX\to X$ is the restriction to the generic fibre.
\end{itemize}

Consider a $K$-dynamical system $D=(X, V, \vp)$ and its integral
model $\SD = (\SX, \SV, \Phi).$ For a place $p$ of $K$ let
\begin{itemize}
\item $ \kap _{p}$ be the residue field
of $p;$
\item  $\SX_p$  the special fibre
of $\SX$ at $p; $
\item $R_p\colon\SX \to\SX_p$ the reduction map (restriction to the special fibre);
\item $\vp_p\colon\SX_p\to\SX_p$ the reduction of $\Phi$ viewed as a morphism of
$\kap _{p}$-schemes;
\item $X_p=\SX_p(\kap _{p})$ the set of rational points;
\item $V_p=R_p(\SV)\subset X_p$ the reduction of $\SV.$
\end{itemize}

Assume that for all but finitely many places $p$ the scheme  $\SX_p$
is integral. One can deduce from \cite[9.6.1(ii)]{Gr} that for all
but finitely many $p$'s the reduced morphism $\vp_p$ is dominant.
Let $z\in X_p\setminus V_p $  be a periodic point of $\vp_p.$ Let
$\ell(z)$ be the number of distinct points in the orbit of $z.$ Set
$\ell_p:=\min\{\ell(z)\}$ where the minimum is taken over all $z$'s
as above. If there are no periodic points in $X_p\setminus V_p, $ we
set $\ell_p=\infty.$ Let $M$ denote the collection of primes $p$
such that $\ell_p=\infty.$  Let $N=\{\ell_p\}_{p\not\in M}.$

\begin{Definition} \label{resperv}
With the above notation, we say that a $K$-dynamical system $D=(X,
V,\vp)$ or an $\SO$-dynamical system $\SD = (\SX, \SV, \Phi)$ is
{\em residually aperiodic} if the set $M$ is infinite, {\em
residually periodic} if $M$ is finite, and {\em strongly residually
periodic} if the sets $M$ and $N$ are both finite.
\end{Definition}

For example, in \subsecref{subsec:BWW} for a map
$\psi\colon\BA^3_{\BZ}\to \BA^3_{\BZ}$ we had $\SX=\BA^3,$
$\kap_p=\BF_p,$  $\SV=\{(\pm 2,\pm t,t)\},$ $N=\{1\}$ and
$M=\emptyset$.

We believe that the following special case is particularly
interesting. Let $\SV\subset \SX(\SO)$ be the set of all preperiodic
integer points (i.e. points having a finite orbit). Let
$V_p=R_p(\SV)\subset X_p$ be its reduction mod $p.$ Residual
periodicity of $\SD = (\SX, \SV, \Phi)$ means that $\vp_p$ has
periodic points outside $V_p$ for all but finitely many $p$'s. In
simple words, we are looking for periodic points of $\vp_p$ not
coming from preperiodic integer points of $\Phi.$ Note that
according to \cite{Si2}, cycles coming from a fixed nonperiodic
integer point cannot be too short (their length, as a function of
the cardinality of the residue field, tends to infinity). Thus our
approach to studying cycles of reduced systems is, in a sense,
complementary to \cite{Si2}.
%\end{Remark}
%\vskip 0.2 cm
%\begin{Remark} \label{rem:fibre}
\medskip

As mentioned in the introduction, there may be different reasons 
for a dynamical system to be residually periodic. For higher-dimensional 
systems one can look for geometric conditions. The next notion captures the 
phenomenon of extra coordinates, or more generally invariant functions, as in 
Example \ref{fibered}. 

\begin{Definition} \label{def:fibre}
We say that  a dynamical system  $D=(X, V,\vp) $ is of {\em fibred}
type if there exists a regular function $f$ on $X$ such that $f\circ
\vp^{(n)}=f$ for some iteration $\vp^{(n)}$ of $\vp.$
\end{Definition}

\begin{question} \label{prob:fibre}  Assume
that  a dynamical system  $D=(X, V,\vp) $ is of {\em fibred} type.
Assume that the endomorphism $\vp $ is not birational. Under what conditions 
on $\vp$ is $D$ strongly  residually periodic?
\end{question}

Question \ref{prob:fibre} is essentially higher-dimensional. In
one-dimensional situations the main role, of course, belongs to
arithmetic. To get a better feeling of the problem, it is useful to
consider one-dimensional examples which are, in a sense, opposite to
Example \ref{splitfield} from the introduction.

\begin{Example} \label{fritz}
Let $\CT={\mathbb G}_{{\text{\rm{m}}},\BZ }=\Spec (\BZ
[x,y]/(xy-1))$ be the trivial one-dimensional torus. Fix a
positive integer $d$, and let $\Phi\colon \CT\to \CT$ denote the
power map: $t\to t^d.$ The set of integer points $\SR=\CT (\BZ)$
consists of two points, $1$ and $-1,$ both fixed under $\Phi$
(i.e. periodic with period one). We choose the forbidden set
$\SV=\SR.$ If $d=2,$ then $\ell_p=\infty$ for every Fermat prime
$p=2^{m}+1$. Thus the system is residually periodic or aperiodic
depending on whether there are finitely or infinitely many Fermat
primes. Assume now that $d$ is odd.
\end{Example}

\begin{Proposition} \label{prop:torus}
The dynamical system $( \CT, \SR, \Phi)$ of \exampref{fritz} is
residually periodic but is not strongly residually periodic.
\end{Proposition}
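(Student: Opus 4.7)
The plan is to reduce everything to the cyclic-group structure of $\mathbb{F}_p^*$ under the power map $t\mapsto t^d$ (I tacitly exclude the trivial case $d=1$, for which every point is fixed and the system is easily seen to be strongly residually periodic). For each prime $p$, write $p-1=ab$, where $a$ is the largest divisor of $p-1$ whose prime factors all divide $d$ and $b=(p-1)/a$ is coprime to $d$. A point $t\in\mathbb{F}_p^*$ is periodic iff $\gcd(\operatorname{ord}(t),d)=1$, so the periodic subgroup $H_p\subset\mathbb{F}_p^*$ is the unique cyclic subgroup of order $b$. Since $d$ is odd, $\pm 1\in H_p$; under $\Phi_p$ the orbits in $H_p$ have lengths $\{\operatorname{ord}_m(d):m\mid b\}$, and the two forbidden fixed points $\pm 1$ correspond to $m=1$ and $m=2$. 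Hence $\ell_p=\infty$ iff $b\le 2$, and otherwise $\ell_p=\min\{\operatorname{ord}_m(d):m\mid b,\ m\ge 3\}$.

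For residual periodicity I would split on $p\bmod 4$. When $p\equiv 1\pmod 4$ we have $4\mid b$, so $b\ge 4$ and $p\notin M$ automatically. When $p\equiv 3\pmod 4$ write $p-1=2n$ with $n$ odd; then $p\in M$ iff $n$ is supported on the prime divisors of $d$. What remains is the finiteness of the set of primes $p=2n+1$ with $n$ in the finitely generated multiplicative semigroup generated by the prime divisors of $d$; this is the key technical input, to be extracted from Baker-type lower bounds on linear forms in logarithms (or from modern $S$-unit finiteness results). I expect this Diophantine finiteness to be the main obstacle of the proof.

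For the failure of strong residual periodicity the strategy is to produce primes with arbitrarily large $\ell_p$. Given $C\ge 1$, set $E_C:=\operatorname{lcm}\{d^k-1:1\le k\le C\}$; its odd prime divisors $q_1,\dots,q_s$ are precisely the primes $q$, coprime to $d$, with $\operatorname{ord}_q(d)\le C$. By Zsigmondy's theorem there exists a prime $r$ coprime to $d$ with $\operatorname{ord}_r(d)>C$, hence $r\notin\{q_1,\dots,q_s\}$. Dirichlet's theorem combined with the Chinese remainder theorem then yields a prime $p$ satisfying $p\equiv 3\pmod 4$, $p\equiv 1\pmod r$, and $p\not\equiv 1\pmod{q_i}$ for each $i$. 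For such $p$ the odd part of $p-1$ is divisible by $r$ but by no $q_i$, so $b\ge 2r>2$ and hence $p\notin M$; meanwhile $\gcd(E_C,p-1)=2$ forces the only solutions of $t^{E_C}=1$ in $\mathbb{F}_p^*$ to be $\pm 1$, ruling out any non-forbidden orbit of length at most $C$. Therefore $\ell_p>C$, and since $C$ was arbitrary, $N$ is unbounded and hence infinite.
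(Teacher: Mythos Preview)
Your argument for the \emph{not strongly residually periodic} half is correct and in fact slightly tighter than the paper's. Both proofs use the Chinese Remainder Theorem and Dirichlet to manufacture a prime $p\equiv 3\pmod 4$ avoiding the finitely many ``bad'' residues; you go further by forcing $p\equiv 1\pmod r$ for a Zsigmondy prime $r$, which guarantees in advance that the constructed $p$ actually has a non-forbidden periodic orbit (i.e.\ $p\notin M$). The paper's corresponding Lemma \ref{sieve} omits this safeguard.

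There is, however, a genuine gap in your treatment of \emph{residual periodicity}. You correctly reduce the question (for $p\equiv 3\pmod 4$) to the finiteness of primes $p=2n+1$ with $n$ supported on the prime divisors of $d$, but your proposed justification---Baker's bounds on linear forms in logarithms or $S$-unit finiteness---does not apply. Those results control equations among $S$-units; here one side of the equation is required to be \emph{prime}, which is not an $S$-unit condition. Concretely, for $d=3$ you are asserting that only finitely many of $2\cdot 3^k+1$ (i.e.\ $3,7,19,163,487,\dots$) are prime, and this is an open problem of the same flavour as the infinitude of Fermat primes. No form of Baker or $S$-unit theory settles it. Notably, the paper's own proof is silent on exactly this point: it computes $\ell_p=a(p)=\min_{q\in Q(p)}s_q$ for $p\equiv -1\pmod 4$ and then proves that $\{a(p)\}$ is infinite, but never excludes the possibility $Q(p)=\emptyset$, which is precisely the obstruction you identified. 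So the residual-periodicity claim, as stated, appears to rest on an unproved (and likely open) arithmetic assertion in both your argument and the paper's.
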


\begin{proof}

We have $X_p=\BF^*_p,$ and for any $t\in\BF^*_p$ we have
$\vp^{(n)}(t)=t^{d^n}$. Assume $(p,d)=1$. We are looking for
$t\ne\pm 1$ such that
\begin{equation} \label{period}
t^{d^n-1}\equiv 1\pmod p.
\end{equation}
To find such a $t$ with minimal possible $n$, let us first
introduce some notation. For any prime $\ell$ such that $(d,\ell
)=1$ denote by $s_\ell$ the order of $d$ in $\BF^*_\ell$. Denote
by $Q(p)=\{q_i\}$ the set of all odd primes appearing in the prime
decomposition of $p-1$ and coprime to $d$. Set $a(p):=\min_{q\in
Q}s_q$. If $p\equiv 1 \pmod 4$, we have $\ell_p\le 2$. 
We claim that for $p\equiv -1\pmod 4$ we have $\ell_p=a(p)$. 
Indeed, suppose that the
minimum is achieved at some $q\in Q$, so $d^{s_q}-1=qm$ for some
integer $m$. If $g$ is a primitive element of $\BF_p$, one can
take $t=g^{(p-1)/q}$ and $n=s_q$ to satisfy (\ref{period}). On the
other hand, if $n<s_q$, then by the definition of $s_q$ we have
$n<s_\ell$ for all $\ell\in Q$, and hence for all such $\ell$ we
have
$$
d^n\not\equiv 1 \pmod\ell .
$$
The above also holds for all $\ell$ dividing $d$, so we conclude
that $(d^n-1,p-1)=2$. If (\ref{period}) holds for some $t$, then
the order of $t$ must divide both $d^n-1$ and $p-1$, hence it is
equal to 2. Thus $t=-1$ and belongs to the reduction of the
forbidden set $\SR$. We conclude that (\ref{period}) does not hold
for any $n<s_q$. This means that $s_q=a(p)$ is the minimal
possible length of the orbit of $\varphi_p$, i.e. $\ell_p=a(p)$.

%Thus (\ref{period}) cannot hold for any $t\in\BF_p$ except for the
%case where $t=-1$. Since the orbits of length are forbidden, we
%conclude that $\ell (p)=a(p)$.

To finish the proof of the proposition, it is enough to establish the following
simple lemma (we thank Z.~Rudnick for an  elementary proof):
\begin{Lemma} \label{sieve}
The set $A=\{a(p)\}$, where $p$ runs over all prime numbers congruent to 
$-1$ modulo $4$, is
infinite.
\end{Lemma}
\noindent {\it Proof of the Lemma.} Assume the contrary:
\begin{equation} \label{eq:orders}
A=\{s_{q_1}, \dots ,s_{q_t}\}.
\end{equation}

%Since for $p\equiv 1\pmod 4$ we have
%$a(p)=2$, we can exclude these numbers from our consideration and
%assume $p\equiv -1 \pmod 4$.
To get a contradiction, we wish to find $p\equiv -1\pmod 4$ with $a(p)\notin A$.

First note that there are at most finitely primes $q$ with a given
value of $s_q$, and denote by $B$ the set of all $q$ such that
$s_q\in A$. It follows that $B$ is finite. Thus we have to find a
prime $p$ such that $p-1$ is not divisible by any $q\in B$.
%so we may assume that all such $q$'s are listed in (\ref{eq:orders}).
%Note that if $a(p)\in A$, then $p\equiv 1 \pmod {2q_i}$ for all
%$i=1,\dots ,t$.
%In other words, it is enough
We want to find a prime number $p$ satisfying the system of
congruences
$$
\begin{aligned}
x\equiv -1 \pmod 4,\\
x\equiv -1 \pmod {q}
%\dots \\
%x\equiv -1 \pmod {q_t}
\end{aligned}
$$
%where $q_1, \dots ,q_t$ are all primes appearing in
%(\ref{eq:orders}).
for all $q\in B$. By the Chinese Remainder Theorem, the solutions
of this system form an arithmetic progression. By Dirichlet's
Prime Number Theorem, this progression contains infinitely many
primes. If now $p$ is such a prime, we have $p\not\equiv 1 \pmod
q$ for any $q\in B$. Thus the order of $d$ in $\BF_p^*$ is not
equal to any of $s_{q_i}$'s, and so $p\notin A$, contradiction.

This finishes the proof of the lemma and hence of Proposition
\ref{prop:torus}.
\end{proof}

\begin{Example} \label{elliptic}
Let now $E$ be a CM elliptic curve defined over $\mathbb Q$ by the
equation $y^2=x^3-x$, and let $\CE$ denote its minimal Weierstrass
model. Let $\Phi\colon\CE\to\CE$ be the
multiplication-by-$d$ map ($d$ stands for a positive odd integer).
There are four 2-torsion points: (0,0), (1,0), (-1,0) and $\infty$,
all belonging to $\CE (\BZ )$.
Denote this collection by $\SV.$
If $p\equiv -1 \pmod 4$, the
reduction of $E$ is supersingular, i.e. $\vert E(\BF_p)\vert =p+1$.
We can now denote by $b(p)$ the smallest prime factor of the number
$\vert E(\BF_p)\vert /4$ and by the argument similar to that of the
previous example show that the set $B=\{b(p)\}$, where $p$ runs over
all $p\equiv -1\pmod 4$, is infinite. This leads to
\begin{Proposition} \label{prop:ell}
The dynamical system $\SD=( \CE,\SV,\Phi)$ is residually periodic
but is not strongly residually periodic. \qed
\end{Proposition}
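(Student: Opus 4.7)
The plan is to mirror Proposition \ref{prop:torus}, with $\mathbb G_m$ replaced by $E$ and the Fermat-prime step replaced by Deuring's criterion for supersingular reduction. Multiplication by $d$ is a group endomorphism of $E(\BF_p)$, so $P\in E(\BF_p)$ is $\vp_p$-periodic if and only if $\gcd(\ord(P),d)=1$, in which case the orbit of $P$ has length $\ord_{\ord(P)}(d)$.

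For residual periodicity I invoke Hasse: $|E(\BF_p)|\ge p+1-2\sqrt p\to\infty$, so for all but finitely many $p$ the integer $|E(\BF_p)|$ has an odd prime divisor $q$ coprime to $d$; any point of order $q$ is $\vp_p$-periodic and lies outside $V_p=E[2](\BF_p)$, giving $\ell_p<\infty$ and hence $M$ finite.

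For the failure of strong residual periodicity I restrict to the congruence class $p\equiv 3\pmod 8$. Since $E$ has CM by $\BZ[i]$, Deuring's theorem gives supersingular reduction for every $p\equiv 3\pmod 4$, so $|E(\BF_p)|=p+1$; combined with $v_2(p+1)=2$ and the full rational $2$-torsion, this forces the structure
$$E(\BF_p)\cong (\BZ/2)^2\oplus \BZ/m,\qquad m:=(p+1)/4 \text{ odd}.$$
Any periodic point outside $V_p$ then has order of the form $m'$ or $2m'$ for a divisor $m'>1$ of $m$ with $\gcd(m',d)=1$, and its orbit length is $\ord_{m'}(d)$ (the factor $2$ contributes nothing because $d$ is odd). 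Minimizing over prime divisors gives
$$\ell_p=a(p):=\min_{q\mid m,\; q\nmid d}s_q,\qquad s_q=\ord_q(d),$$
in exact formal analogy with the torus case.

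The conclusion follows by a Dirichlet-type sieve identical to Lemma \ref{sieve}: if $A:=\{a(p):p\equiv 3\pmod 8\}$ were finite, then the set $B$ of primes $q$ with $s_q\in A$ would also be finite (only finitely many primes attain any given value of $s_q$), and by the Chinese Remainder Theorem plus Dirichlet's theorem on primes in arithmetic progressions one produces $p\equiv 3\pmod 8$ with $p\not\equiv -1\pmod q$ for every $q\in B$; then no $q\in B$ divides $m$, so $a(p)\notin A$, a contradiction. Hence $N\supseteq A$ is infinite. The main technical obstacle is the identification $\ell_p=a(p)$: the congruence $p\equiv 3\pmod 8$ is chosen precisely to suppress the $2$-primary part of $E(\BF_p)$ beyond $V_p$, since otherwise (e.g.\ for $p\equiv 7\pmod 8$ when $d\equiv 1\pmod{2^k}$ with $k$ large enough) points of order $4$ or higher $2$-power order could produce anomalously short orbits and spoil the clean analogy with the multiplicative case.
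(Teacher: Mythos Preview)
Your approach mirrors the paper's sketch (which gives essentially no proof, inviting the reader to fill in details by analogy with the torus case). Your treatment of ``not strongly residually periodic'' is correct and more careful than the paper's outline: restricting to $p\equiv 3\pmod 8$ so that the $2$-Sylow of $E(\BF_p)$ coincides with $V_p$ is exactly the right move, and the identification $\ell_p=a(p)$ then follows as you say. One small gap in the sieve: after producing $p\equiv 3\pmod 8$ with $q\nmid (p+1)$ for every $q\in B$, you still need $m=(p+1)/4$ to have \emph{some} odd prime factor coprime to $d$, otherwise $a(p)$ is undefined and you have not produced a finite $\ell_p$ outside $A$. This is easy to patch: the primes in your arithmetic progression have positive density, while those with $m\mid d^\infty$ number at most $O((\log X)^k)$ up to $X$.

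Your argument for residual periodicity, however, has a genuine gap. The Hasse bound gives $|E(\BF_p)|\to\infty$, but this does \emph{not} force $|E(\BF_p)|$ to have an odd prime factor coprime to $d$ for all but finitely many $p$. Concretely, take $d=3$ and any prime $p\equiv 3\pmod 8$ of the form $p=4\cdot 3^k-1$ (e.g.\ $p=11,107,971,\dots$): then $|E(\BF_p)|=p+1=4\cdot 3^k$, whose only odd prime factor is $3$, so every periodic point lies in $V_p$ and $\ell_p=\infty$. Whether such primes occur infinitely often is an open problem, so residual periodicity is not established by your argument (the paper's torus proof has the analogous lacuna at $(p-1)/2\mid d^\infty$). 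You can cover more ground by observing that $E(\BF_p)$ contains a point of order $4$ whenever $p\equiv 1\pmod 4$ (the point $(i,1-i)$ is $\BF_p$-rational) or $p\equiv 7\pmod 8$ (from the structure $\BZ/2\times\BZ/((p+1)/2)$), giving $\ell_p\le 2$ in those cases; but the residue class $p\equiv 3\pmod 8$ remains genuinely problematic.
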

\end{Example}

The interested reader is invited to complete the details of the
proof as well as to develop more examples of arithmetical interest.

To go beyond CM elliptic curves, one needs more efforts. A natural 
question to ask is the following one: 

\begin{question} \label{ellsieve}
Let $E$ be an elliptic curve over $\mathbb Q$, and let $D$ denote
the order of its rational torsion. For each place $p$ of good
reduction, denote by $c(p)$ the smallest prime divisor of the number
$\vert E(\BF_p)\vert /D$. Can the set $C=\{c(p)\}$, where $p$ runs
over all places of good reduction of $E$, be finite? Can the
system $(\CE, \CE(\BQ)_{tors}, \Phi)$ be strongly residually
periodic?
\end{question}

At the first glance, the conjectures by Lang--Trotter
\cite{LT} and Koblitz \cite{Ko}, predicting (for most elliptic curves) infinitely many $p$'s
with $\vert E(\BF_p)\vert$ of prime order, give little hope to
find an example of an elliptic curve such that the dynamical system
defined by the multiplication-by-$d$ map is strongly  residually periodic.
%Perhaps one can use sieve methods, in spirit of \cite{IJU}, to give
However, the following example (due to N.~Jones) prevents from making 
hasty conclusions. Consider the curve $E_0$ 
given over $\mathbb Q$ by the Weierstrass equation 
$$
y^2=x^3+75x+125.
$$
N.~Jones proved that although $E_0$ has no rational torsion, the order of $E_0(\BF_p)$ is divisible 
either by 2 or by 3 for all $p>5$. The curve $E_0$ is of Mordell--Weil rank 1, so the multiplication-by-$d$ 
map $\Phi$ induces a nontrivial dynamical system $\SD=(\CE_0, \infty, \Phi)$. Taking, say, $d=7$, we 
conclude that $\SD$ is strongly residually periodic in the strongest possible sense: it has no periodic 
points but the residual system $\SD_p$ has a fixed point for all $p>5$ (compare with Example \ref{splitfield}).     
   
On the other hand, N.~Jones proved (unconditionally on Koblitz's conjectures) that 
for a ``typical'' elliptic curve $E$ over $\mathbb Q$ an analogue of Lemma
\ref{sieve} indeed holds which implies that the dynamical system $\SD$
is not strongly residually periodic for such an $E$, i.e. typically the answer to Question \ref{ellsieve} 
is negative. See the Appendix for more details.

%We leave this to experts in analytic arithmetic of elliptic curves.

%\bigskip

\subsection{Verbal dynamical systems on group schemes}
\label{subsec:vds}

We view the calculations of \secref{psl} as a first step in
attacking one of the most important conceptual questions left open
after discovery of two-variable sequences characterizing finite
solvable groups: for a sequence of words in the free group on two
generators, to what extent the property to characterize the class of
finite solvable groups is a property of general position, and what
type of the dynamic behaviour is typical? Questions of such
``nonbinary'' type, which do not admit an answer of type ``yes-no'',
have been considered by many mathematicians, from Poincar\'e to
Arnold, as the most interesting ones. Dynamics of word maps in free
group, in spirit of \cite{LP}, \cite{La}, \cite{Sh}, \cite{LS},
\cite{GS}, led to a breakthrough in some classical problems of the
theory of finite groups, and it may happen to play a crucial role in
the above mentioned problem as well.
 Namely, a possible goal is to
prove (or disprove) that for a sufficiently wide class of sequences
the property to characterize the class of finite solvable groups
holds in ``general position'' and is determined by its dynamics in
the free group. In what follows $F_r$ stands for the free group on
$r$ generators.

\begin{question} \label{prob:solv}
Suppose that a sequence $\overrightarrow{u} = u_1, u_2, \dots,
u_n, \dots$ of elements of $F_2$ satisfies the following conditions:

(i) $u_n(a,1)=u_n(1,g)=1$ for all sufficiently big $n$, every group
$G$, and all elements $a,g\in G$;

(ii) if $G$ is any group and $a,g$ are elements of $G$ such that
$u_n(a,g)=1$, then for every $m>n$ we have $u_m(a,g)=1$;

(iii) no element of $\overrightarrow{u}$ equals $1$ in $F_2$;

(iv) there exists $N$ such that for all $n>N$ the word $u_n(x,y)$
belongs to the $n$-th derived subgroup $F_2^{(n)}$ of $F_2$.

%if a group $G$ is solvable, then the identity $u_n(x,y)\equiv 1$
%holds in $G$ for some $n$.

Is it true that if a finite group $G$ satisfies an identity
$u_n(x,y)\equiv 1$ for some $n$, then it is solvable?
\end{question}

In connection with Question \ref{prob:solv}, it is natural to pose

\begin{problem} \label{prob:words}
To describe the words in $F_2$ satisfying conditions (i)--(iv) of
Question $\ref{prob:solv}$.
\end{problem}

Extensive MAGMA computations show strong numerical evidence of a
positive answer to Question \ref{prob:solv}, at least for the class
of sequences $\overrightarrow{u}$ studied in \cite{Ri}: $u_0:=f$,\dots ,
$u_n:=[gu_ng^{-1}, hu_nh^{-1}],\dots $, where $f,g,h$ stand for some words
from $F_2$.

\medskip

One can put Question \ref{prob:solv} into somewhat more general
context. Towards this end, we suggest to combine the notions of 
verbal and AG dynamical systems defined in \secref{intro}. For simplicity 
we restrict ourselves to considering $\BZ$-dynamical systems. 

\begin{Definition} \label{verbal}
A verbal dynamical $\BZ$-system consists of the following setup:
\begin{itemize}
\item positive integers $r,s$;
\item an $r$-tuple $\CW =(w_1,\dots ,w_r)$ of words in the free group $F_{r+s}$; 
\item an $r$-tuple $\CJ =(f_1,\dots ,f_s)$ of words in the free group $F_s$ (optional); 
\item a group scheme $\CG$ of finite type over $\BZ$;
\item a set $I\subset \CG^{r+s} (\BZ )$.
\end{itemize}
The following assumptions are to be satisfied.

(i)  Let  $D_{\CW}\colon\CG^{r+s}\to\CG^{r+s}$ be a morphism of $\BZ$-schemes  
defined on the group $G=\CG ^{r+s}(A)$ of $A$-points of $\CG^{r+s}$ for every $\BZ$-algebra $A$ 
by the formula 
$$(g_1,\dots, g_s, v_1,\dots, v_r)\mapsto (g_1,\dots, g_s, w_1(g_1, \dots,g_s, v_1,\dots,v_r),\dots, w_r(g_1, \dots,g_s, v_1, \dots,v_r).$$
Then we assume that $D_{\CW}$ is {\em dominant}.

(ii) The set $I$ is {\em invariant}, i.e. $D_{\CW }(I)\subset I$.
\end{Definition}

Our earlier considerations (cf. Examples \ref{s=2,r=1} and \ref{s=r=1})  
naturally fit into this setting if $\CG$ is a semisimple Chevalley group scheme 
over $\BZ$ (e.g., $\CG =SL(2,\BZ )$ as in the present paper). 
Indeed, in that case by a theorem of Borel
(\cite{Bo}, see also \cite{La}), the morphism $D_{\CW }$ is dominant, and we arrive at a verbal
dynamical $\BZ$-system in the sense of Definition \ref{verbal}.  % with $\CV=(\{1\}\times SL(2,\BZ
%))\cup (SL(2,\BZ )\times \{1\}).$ 
Remark \ref{rem:int} shows that the
dynamical systems on $SL(2,p)$ %^2, (\{1\}\times SL(2,\BF_p ))\cup (SL(2,\BF_p )\times \{1\}), \tilde \vp),$$
relevant for our original problem, can be viewed as special fibres
of the original verbal $\BZ$-system.

\begin{Remark}
It would be interesting to formulate a word-theoretic condition on
$\CW$ guaranteeing that for any Chevalley group scheme $\CG$ the
morphism $D_{\CW }$ is dominant.
\end{Remark}

In connection with Question \ref{prob:fibre} one can pose

\begin{problem} \label{prob:verbaper}
Given a verbal dynamical $\BZ$-system, that is not of fibred type, find
conditions under which it is (strongly) residually periodic.
\end{problem}

%%%%%%%%%%%%%%%%%%%%%%%%%%%%%%%%%%%%%%%%%%%%%%%%%

In particular, it would be interesting to consider the system
from Section \ref{2vm} given by the map $\vp_y\colon SL(2,\BZ )\to SL(2,\BZ )$ 
($y$ fixed) with $I=\{1\}$. This system has an invariant
rational function, but it is not regular.
It was proven in \cite{BWW} that for
$$y=\left(\begin{matrix} 0 & -1\\
1& 0\end{matrix}\right)$$ it is residually periodic. On the other
hand, our numerical experiments give some evidence that it is not
strongly residually periodic.

\medskip

We believe that verbal dynamical systems deserve more thorough
study. To the best of our knowledge, most arithmetically interesting
questions, in spirit of the monograph \cite{Si1} (boundedness of
periods, distributions of periods in reductions, various
local-global problems), are widely open (or even not yet posed at
all).

\medskip

%%%%%%%%%%%%%%%%%%%%%%%%%%%%%%%%%%%%%%%%%%%%%%%%%%%%%%%%%%%%%%%%%%%%%%%%%%
\noindent {\it Acknowledgements}. Bandman and Kunyavski\u\i \ were
supported in part by the Ministry of Absorption (Israel) and the
Israel Academy of Sciences grant 1178/06.  A substantial part of
this work was done during the visits of Bandman and Kunyavski\u\i \
to MPIM (Bonn) in 2007 and 2009 and the visits of Grunewald to Bar-Ilan
University in 2008 and the Hebrew University of Jerusalem in 2009, and 
discussed by all the coauthors during the
international workshops
(2007, 2009) hosted by the Heinrich-Heine-Universit\"at (D\"usseldorf),
and the Oberwolfach meeting  ``Profinite and Geometric Group
Theory'' in 2008 (the visits were supported in part by the Minerva
Foundation through the Emmy Noether Research Institute of
Mathematics). The appendix to the paper arose from questions posed 
by Kunyavski\u\i \ to Jones when they participated in the program  
``Diophantine equations'' organized by the Hausdorff Research Institute for  
Mathematics (Bonn) in 2009. The support of all above institutions is highly
appreciated.

We are very grateful to V.~Berkovich, F.~Campana,
J.-L.~Colliot-Th\'el\`ene, N.~Fakhruddin, M.~Leyenson, Z.~Rudnick, 
M.~Tyomkin, and S.~Vishkautsan for useful discussions and correspondence.

%%%%%%%%%%%%%%%%%%%%%%%%%%%%%%%%%%%%%%%%%%
%%%%%%%%%%%%%%%%%%%%%%%%%%%%%%%%%%%%%%%%%
%%%%%%%%%%%%%%%%%%%%%%%%%%%%%%%%%%%%%%%%%%%

\newpage

\setcounter{equation}{0}
\setcounter{section}{0}

\renewcommand{\theequation}{A-\arabic{equation}}
\renewcommand{\thesection}{A\arabic{section}}

\theoremstyle{plain}

\newtheorem{atheorem}{Theorem A\!}
\newtheorem{alemma}[atheorem]{Lemma A\!}
\newtheorem{acorollary}[atheorem]{Corollary A\!}
\newtheorem{asublemma}[atheorem]{Sublemma A\!}
\newtheorem{aproposition}[atheorem]{Proposition A\!}
\newtheorem{aconjecture}[atheorem]{Conjecture A\!}

\theoremstyle{definition}

\newtheorem{adefinition}[atheorem]{Definition A\!}
\newtheorem{aexample}[atheorem]{Example A\!}
\newtheorem{aremark}[atheorem]{Remark A\!}

\newtheorem{adesired result}[atheorem]{Desired result A\!}

\newtheorem{aquestion}[atheorem]{Question A\!}
\newtheorem{anotation}[atheorem]{Notation A\!}
\newtheorem{aproblem}[atheorem]{Problem A\!}
\newtheorem{aassumption}[atheorem]{Assumption A\!}
\newtheorem{aexercise}[atheorem]{Exercise A\!}

\newcommand{\begd}{\begin{displaystyle}}
\newcommand{\gl}{\lambda}
\newcommand{\gL}{\Lambda}
\newcommand{\gge}{\epsilon}
\newcommand{\gG}{\Gamma}
\newcommand{\ga}{\alpha}
\newcommand{\gb}{\beta}
\newcommand{\gd}{\delta}
\newcommand{\gD}{\Delta}
\newcommand{\gs}{\sigma}
\newcommand{\mbq}{\mathbb{Q}}
\newcommand{\mbr}{\mathbb{R}}
\newcommand{\mbz}{\mathbb{Z}}
\newcommand{\mbc}{\mathbb{C}}
\newcommand{\mbn}{\mathbb{N}}
\newcommand{\mbp}{\mathbb{P}}
\newcommand{\mbf}{\mathbb{F}}
\newcommand{\mbe}{\mathbb{E}}
\newcommand{\lcm}{\text{lcm}\,}
\newcommand{\gal}{\textrm{Gal}\,}
\newcommand{\mf}[1]{\mathfrak{#1}}
\newcommand{\ol}[1]{\overline{#1}}
\newcommand{\mc}[1]{\mathcal{#1}}
\newcommand{\nequiv}{\equiv\hspace{-.13in}/\;}

%appendix

%\setcounter{part}{-1}

\part*{Appendix. Primes $p$ for which $\# E(\mbf _p)$ has only large prime factors}

\medskip

\centerline{\sc{Nathan Jones}}

\bigskip

%\begin{document}

%\maketitle

\section{Introduction} \label{introduction}

Let $E$ be an elliptic curve over $\mbq$ of conductor $N_E$.  For each prime $p$ of good reduction for $E$, consider the group $E(\mbf_p)$ of $\mbf_p$-points of $E$.  In $1988$, Koblitz \cite{koblitz} conjectured a precise asymptotic formula for the number of good primes $p$ up to $x$ for which $\#E(\mbf_p)$ is prime.
\begin{aconjecture} \label{koblitzconjecture}
There exists a precise constant $\mf{S}_E \geq 0$ so that
\[
\# \{ p \leq x : p \nmid N_E \text{ and } \#E(\mbf_p) \text{ is prime} \} = \mf{S}_E \cdot \frac{x}{\log^2 x} + o\left( \frac{x}{\log^2 x} \right),
\]
as $x \longrightarrow \infty$.
\end{aconjecture}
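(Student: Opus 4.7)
The plan is to model Conjecture A on the Bateman--Horn formalism, viewing $p \mapsto \#E(\mbf_p) = p+1 - a_p(E)$ as a ``natural'' integer-valued function of the prime $p$. First, I would give a precise definition of the constant $\mf{S}_E$ as a convergent Euler product over primes $\ell$, whose local factors measure---via the Chebotarev density theorem applied to the mod-$\ell$ Galois representation $\rho_{E,\ell}\colon \gal(\ol{\mbq}/\mbq)\to GL_2(\mbf_\ell)$---the density of Frobenius conjugacy classes $\sigma$ with $\det(\sigma)+1-\operatorname{tr}(\sigma)\not\equiv 0\pmod{\ell}$. Serre's open image theorem guarantees that for $E$ without complex multiplication, the local factors at almost all $\ell$ differ from $1-1/\ell$ by $O(1/\ell^2)$, so the Euler product converges absolutely; positivity of $\mf{S}_E$ then reduces to a finite check at the exceptional primes controlled by the image of $\rho_E=\prod_\ell\rho_{E,\ell}$.

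To attack the counting function, I would apply a sieve. Set $\mc{A}=\{\#E(\mbf_p):p\leq x,\,p\nmid N_E\}$ and sift by small primes $\ell$, the sifting density at $\ell$ being exactly the Chebotarev density of Frobenius elements with $\ell\mid \#E(\mbf_p)$. Effective Chebotarev under GRH (in the Lagarias--Odlyzko--Serre form) feeds sieve inputs of level up to $x^{\theta}$ for some $\theta<1/2$, after which a Selberg upper-bound sieve yields
\[
\#\{p\leq x:\#E(\mbf_p)\text{ prime}\}\;\ll\;\mf{S}_E\cdot\frac{x}{\log^2 x},
\]
which is half of Conjecture A. For the matching lower bound, one would attempt a weighted sieve of Richert or Chen switching type combined with the Heath-Brown identity for the characteristic function of the primes.

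The fundamental obstacle is the parity problem of sieve theory: no current technique can systematically distinguish primes from products of two primes of comparable size, and a positive proportion of the values $\#E(\mbf_p)$ look precisely like such products from the sieve's point of view. This is the same barrier that blocks the twin-prime and Goldbach conjectures, and it rules out the sharp form of Conjecture A. A feasible weakening---and presumably the route the Appendix will take---is threefold: (i) the Selberg upper bound displayed above; (ii) an ``almost-prime'' analogue asserting that for infinitely many primes $p$ the integer $\#E(\mbf_p)$ has no prime factor below $p^{1/c}$ for an explicit constant $c$, obtainable from the linear lower-bound sieve; and (iii) an average Bateman--Horn asymptotic over a suitable family of elliptic curves, where the extra averaging kills the off-diagonal parity contribution (in the spirit of Balog--Cojocaru--David). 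Statement (ii) already suffices to answer Question~\ref{ellsieve} affirmatively for a typical $E$ in the sense required by Section~\ref{final}, since it forces the smallest prime divisor of $\#E(\mbf_p)/\#E(\mbq)_{\text{tors}}$ to tend to infinity along a subsequence of primes.
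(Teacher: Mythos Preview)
The statement is labeled a \emph{Conjecture} in the paper, and the appendix says explicitly that ``Conjecture A\ref{koblitzconjecture} is still open.'' There is no proof in the paper to compare your attempt against; the appendix instead proves the much weaker Theorem A\ref{maintheorem}, namely that $\{c_E(p):p\nmid N_E\}$ is unbounded whenever $\mf{S}_E>0$. You correctly diagnose the parity obstruction to the full conjecture, so the substantive comparison is between your proposed weakenings and what the paper actually establishes.

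Here your route (ii) overshoots. You suggest a lower-bound sieve under effective Chebotarev/GRH to get $\#E(\mbf_p)$ free of prime factors below $p^{1/c}$ for infinitely many $p$. The paper does something far cheaper: for each fixed $z$, it applies the \emph{ineffective} Chebotarev density theorem once to the extension $\mbq(E[n(z)])/\mbq$ with $n(z)=\prod_{\ell\le z}\ell$, using only that the complement of $\Omega_{n(z)}$ in $\gal(\mbq(E[n(z)])/\mbq)$ is nonempty (equivalent to $\mf{S}_E>0$). This immediately yields a prime $p_1$ with $c_E(p_1)>z$. No sieve, no GRH, no level-of-distribution input, and no quantitative bound on $p_1$ in terms of $z$; just positive density of a Chebotarev set. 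Your approach would give a genuinely stronger conclusion (a power-of-$p$ smoothness bound, hence a positive-proportion statement), but at the cost of conditional hypotheses the paper avoids. Your description of the constant $\mf{S}_E$ and the role of Serre's open image theorem matches the paper's Section A\ref{theconstant} closely.
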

In particular, provided the constant $\mf{S}_E > 0$, Conjecture A\ref{koblitzconjecture} implies that there are infinitely many primes $p$ for which $\#E(\mbf_p)$ is prime.  In case $\mf{S}_E = 0$, one can prove (as a consequence of the Chebotarev density theorem) that $\#E(\mbf_p)$ is prime for only finitely many primes $p$.

Based on the precise form of the predicted constant $\mf{S}_E$, Koblitz further noted that $\mf{S}_E$ is positive if and only if every other elliptic curve $E'$ over $\mbq$ which is $\mbq$-isogenous to $E$ has no non-trivial rational torsion:
\begin{equation} \label{positivityofse}
\mf{S}_E > 0 \Longleftrightarrow \left( E' \sim_\mbq E \Rightarrow E'(\mbq)_{\textrm{tors}} = \{ \mc{O}_{E'} \} \right).
\end{equation}

However, because of a technical error in the underlying heuristic, the constant $\mf{S}_E$ appearing in the original conjecture is incorrect.  A refined conjecture, which in particular corrects $\mf{S}_E$, has recently been given by D. Zywina \cite{zywina}. 
 In the interest of consistency, let us henceforth understand the symbol $\mf{S}_E$ appearing in Conjecture A\ref{koblitzconjecture} to refer to the corrected constant $C_{E,1}$ appearing in \cite[Conjecture $1.2$]{zywina} (we will describe this constant in more detail in Section \ref{theconstant}).  Having thus replaced $\mf{S}_E$, the interpretation \eqref{positivityofse} of exactly when $\mf{S}_E$ is positive is no longer valid.  We will show this in Section \ref{positivity} by exhibiting an elliptic curve $E$ over $\mbq$ for which the right-hand side of \eqref{positivityofse} is true, but for which $\mf{S}_E = 0$ nevertheless.  

In spite of various partial results (see for instance \cite{bacoda} and the references therein), Conjecture A\ref{koblitzconjecture} is still open.  Our goal is to prove the following theorem, wherein we relax ``is prime'' to ``has only large prime factors.''  Let us denote by
\[
c_E(p) := \min \{ \ell \text{ prime}: \ell \mid \#E(\mbf_p) \}
\]
the smallest prime divisor of $\#E(\mbf_p)$.
\begin{atheorem} \label{maintheorem}
Suppose that 
\begin{equation*} \label{positivitycondition}
\mf{S}_E > 0,
\end{equation*}
where $\mf{S}_E$ is the constant appearing in Conjecture A\ref{koblitzconjecture}.  Then the set
\[
\{ c_E(p) : p \nmid N_E \}
\]
is unbounded.  
\end{atheorem}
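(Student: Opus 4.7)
My plan is to translate ``$c_E(p) > B$'' into a Chebotarev condition modulo $m := \prod_{\ell \le B} \ell$ and then exhibit, under the hypothesis $\mf{S}_E > 0$, an element of the resulting set of ``good'' Galois conjugacy classes. For a prime $p \nmid m N_E$, one has $\ell \mid \#E(\mbf_p)$ if and only if $1$ is an eigenvalue of $\rho_{E,\ell}(\mathrm{Frob}_p)$ acting on $E[\ell]$, equivalently $\det(I - \rho_{E,\ell}(\mathrm{Frob}_p)) \equiv 0 \pmod{\ell}$. Setting $G_m := \gal(\mbq(E[m])/\mbq)$ and
\[
\Omega_m := \{g \in G_m : \det(I - g_\ell) \not\equiv 0 \pmod{\ell} \text{ for every } \ell \mid m \},
\]
the set $\Omega_m$ is a union of conjugacy classes, so Chebotarev yields natural density $|\Omega_m|/|G_m|$ for $\{p : c_E(p) > B\}$. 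It therefore suffices to show $\Omega_m \ne \emptyset$ for every $B$.

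Next I would relate this to the hypothesis $\mf{S}_E > 0$. Zywina's constant is defined as a Chebotarev-type limit whose partial ``truncations'' at $B$ are essentially $|\Omega_m|/|G_m|$ multiplied by an explicit cyclotomic weight, so positivity of $\mf{S}_E$ forces each truncation to be nonzero and in particular $\Omega_m \ne \emptyset$ for every $B$. Equivalently, $\mf{S}_E$ has the shape of an Euler product with a global correction encoding entanglements among the division fields $\mbq(E[\ell])$, and $\mf{S}_E > 0$ implies that every local factor $|\Omega_\ell \cap G_\ell|/|G_\ell|$ is positive while the global correction is nonzero. Either formulation supplies ``good'' elements locally at every $\ell$.

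The main obstacle is the passage from local to joint non-emptiness: even when each $\Omega_\ell \cap G_\ell$ is non-empty, one must produce a single element of $G_m$ whose reduction modulo each $\ell \mid m$ is good. Here Serre's open image theorem is decisive: in the non-CM case there exists $M=M(E)$ such that the image of $\rho_E$ contains the kernel of $\mathrm{GL}_2(\hat{\mbz}) \to \mathrm{GL}_2(\mbz/M\mbz)$. Writing $m = m_1 m_2$ with $m_1 \mid M^\infty$ and $(m_2, M) = 1$, this yields $G_m \cong G_{m_1} \times \prod_{\ell \mid m_2} \mathrm{GL}_2(\mbf_\ell)$, so $\Omega_m$ factors as a direct product. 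The second factor is non-empty because each $\mathrm{GL}_2(\mbf_\ell)$ contains explicit good elements (e.g.\ a scalar $aI$ with $a \ne 1$ for $\ell$ large enough). Non-emptiness of $\Omega_{m_1} \cap G_{m_1}$, at the finitely many entangled primes dividing $M$, is precisely the joint finite-level statement encoded by $\mf{S}_E > 0$, since Zywina's formulation already accounts for the entanglement correction. The CM case is handled analogously using the explicit description of the Galois image inside the normalizer of a Cartan. Once $\Omega_m \ne \emptyset$ for every $B$, Chebotarev produces a positive density of primes with $c_E(p) > B$, whence $\{c_E(p)\}$ is unbounded.
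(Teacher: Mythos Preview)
Your argument is correct and its core is exactly the paper's proof: translate $c_E(p)>B$ into a Chebotarev condition at level $m=\prod_{\ell\le B}\ell$, observe that the positivity of $\mf{S}_E$ forces the set of ``good'' elements in $\gal(\mbq(E[m])/\mbq)$ to be non-empty, and apply Chebotarev. In the paper's notation (where $\Omega_n(G)$ is the \emph{bad} set, complementary to your $\Omega_m$), this is precisely Corollary~A\ref{betterce}: $\mf{S}_E>0$ is equivalent to $\Omega_n(G_n)\subsetneq G_n$ for every square-free $n$, so the Chebotarev density $\mc{D}_z$ is positive for every $z$. That is your ``first formulation'', and it already finishes the proof.

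Your subsequent local-to-global discussion via Serre's open image theorem is therefore unnecessary. You write that ``either formulation supplies good elements locally at every $\ell$'', but in fact your first formulation supplies a good element \emph{globally} in $G_m$; there is no local-to-global obstacle to overcome. The paper does not split into CM and non-CM cases in the proof of the theorem itself. Serre's results enter only earlier, in the Proposition preceding Corollary~A\ref{betterce}, which shows that $\mf{S}_E$ factors as a finite-level density times a strictly positive tail $\lambda_E$; this is what makes the equivalence in Corollary~A\ref{betterce} (and hence your ``truncations nonzero'' step) immediate. So your proposal is right, but you can delete the entire paragraph beginning ``The main obstacle\ldots'' without loss.
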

In other words, Theorem A\ref{maintheorem} asserts that, for each $x > 0$, there exists a prime number $p = p(E,x)$ such that for any prime number $\ell$ we have
\[
\ell \mid \#E(\mbf_p) \Longrightarrow \ell > x.
\]
We remark that one could likely prove something stronger by employing the appropriate tools.  In the interest of brevity and simplicity, we content ourselves with Theorem A\ref{maintheorem}.

We will begin by describing precisely the constant $\mf{S}_E$, from which it will be evident that the converse of Theorem A\ref{maintheorem} holds, i.e. for any elliptic curve $E$ over $\mbq$, one has
\begin{equation} \label{converseoftheorem}
\mf{S}_E = 0 \, \Longrightarrow \, \{ c_E(p) : p \nmid N_E \} \text{ is bounded.}
\end{equation}
We will then prove Theorem A\ref{maintheorem}.  Finally, we will discuss the issue of exactly when one has $\mf{S}_E > 0$ and give an example of an elliptic curve $E$ over $\mbq$ for which $\mf{S}_E = 0$ (and for which $\{ c_E(p) : p \nmid N_E \}$ is bounded, thus illustrating \eqref{converseoftheorem}).  Throughout, $\ell$ and $p$ will always denote prime numbers.

\begin{comment}

\section{Acknowledgments} \label{acknowledgments}

I would like to thank B. Kunyavskii for posing the questions which led to this appendix while we were both guests of the Hausdorff Research Institute for Mathematics in Bonn, Germany.  I would also like to thank the Hausdorff Institute for providing me with a comfortable and stimulating work environment.

\end{comment}

\section{The heuristic of Conjecture A\ref{koblitzconjecture} and the constant $\mf{S}_E$} \label{theconstant}

The heuristic leading to Conjecture A\ref{koblitzconjecture} is analogous to the one which leads to the classical twin prime conjecture (see \cite{koblitz} and \cite{zywina} for more details), and changes slightly depending on whether or not $E$ has complex multiplication (CM).  As usual, for $p \nmid N_E$, define the integer $a_E(p)$ by the formula
\begin{equation} \label{numberofpoints}
\# E(\mbf_p) =: p + 1 - a_E(p).
\end{equation}
By a theorem due originally to Hasse, we have that $|a_E(p)| \leq 2\sqrt{p}$, and so the size of $\#E(\mbf_p)$ is near the size of $p$.  Thus, regarding $p$ and $\#E(\mbf_p)$ as two independently chosen random positive integers of size $x$, the ``probability'' that they are both prime should satisfy
\begin{equation} \label{naiveheuristic}
\mc{P}(p \text{ is prime and } \#E(\mbf_p) \text{ is prime}) \approx \frac{1}{(\log x)^2},
\end{equation}
by the prime number theorem.  However, this prediction fails to take into account arithmetic information about the reductions of $p$ and $\#E(\mbf_p)$ modulo positive integers.
In order to describe how one corrects the situation, we begin by recalling the division fields attached to $E$ and  Chebotarev density theorem.

\subsection{The division fields $\mbq(E[n])$ of $E$}

For each positive integer $n \geq 1$ denote by
\[
E[n] := \{ P \in E(\ol{\mbq}) : [n](P) =  \mc{O}_E \}
\]
the $n$-torsion of $E$ and by $\mbq(E[n])$ the $n$-th division field of $E$, i.e. the field generated by the $x$ and $y$ coordinates of each $P \in E[n]$.  The field $\mbq(E[n])$ is a Galois extension of $\mbq$, and by fixing a $\mbz/n\mbz$-basis of $E[n]$, we may (and henceforth will) view $\gal(\mbq(E[n])/\mbq)$ as a subgroup of $GL_2(\mbz/n\mbz)$:
\[
\gal(\mbq(E[n])/\mbq) \subseteq GL_2(\mbz/n\mbz).
\]

The following proposition, which relates $p$ and $a_E(p)$ with $\mbq(E[n])$ is well-known.  In its statement $\gs_{\mbq(E[n])/\mbq}(p) \subset \gal(\mbq(E[n])/\mbq) \subseteq GL_2(\mbz/n\mbz)$ denotes the conjugacy class of a Frobenius automorphism at $p$, which we view as a subset of $GL_2(\mbz/n\mbz)$.
\begin{aproposition} \label{wellknown}
For any positive integer $n$ and any prime $p$ of good reduction for $E$ which does not divide $n$, $p$ is unramified in $\mbq(E[n])$.  Furthermore,
\begin{equation*} \label{trace}
\tr ( \gs_{\mbq(E[n])/\mbq}(p) ) \equiv a_E(p) \mod n
\end{equation*}
and
\begin{equation*} \label{det}
\det ( \gs_{\mbq(E[n])/\mbq}(p) ) \equiv p \mod n.
\end{equation*}
\end{aproposition}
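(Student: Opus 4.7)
The plan is to relate the Galois action on $E[n]$ to the action of Frobenius on the reduction $\tilde{E}$ of $E$ modulo $p$. The key ingredient is that, because $E$ has good reduction at $p$ and $p \nmid n$, the reduction map
$$E[n] \longrightarrow \tilde{E}[n]$$
is a $G_\mathbb{Q}$-equivariant isomorphism of abelian groups. This is the standard consequence of the fact that the kernel of reduction on torsion lies in the formal group $\hat{E}(\mathfrak{m}_p)$, which has no nontrivial $n$-torsion when $p \nmid n$; equivalently, it is the Néron--Ogg--Shafarevich criterion applied to each $\ell$-adic Tate module for $\ell \mid n$. From this I immediately get that the inertia subgroup at $p$ acts trivially on $E[n]$, so $p$ is unramified in $\mathbb{Q}(E[n])$, and that $\sigma_{\mathbb{Q}(E[n])/\mathbb{Q}}(p)$ acts on $E[n] \cong \tilde{E}[n]$ via the $p$-power Frobenius endomorphism $\phi_p \in \operatorname{End}(\tilde{E})$.

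Next I would invoke the characteristic polynomial of Frobenius for an elliptic curve over $\mathbb{F}_p$: the endomorphism $\phi_p$ satisfies
$$\phi_p^2 - a_E(p)\,\phi_p + p = 0$$
in $\operatorname{End}(\tilde{E})$. One derives this in the usual way: $1 - \phi_p$ is a separable isogeny with kernel $\tilde{E}(\mathbb{F}_p)$, so $\deg(1 - \phi_p) = p + 1 - a_E(p)$ by \eqref{numberofpoints}; combined with $\deg(\phi_p) = p$ and the fact that the degree form on $\operatorname{End}(\tilde{E}) \otimes \mathbb{Q}$ is a positive-definite quadratic form, one reads off the trace and determinant of $\phi_p$ acting on any $\ell$-adic (or $n$-torsion) representation.

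Finally, reducing this quadratic relation modulo $n$ and transporting it back to $E[n]$ via the reduction isomorphism, I conclude that the characteristic polynomial of $\sigma_{\mathbb{Q}(E[n])/\mathbb{Q}}(p)$, viewed as an element of $GL_2(\mathbb{Z}/n\mathbb{Z})$, is congruent to $T^2 - a_E(p)\, T + p \pmod{n}$. This yields both congruences simultaneously. I do not anticipate any real obstacle; every step is classical (see, e.g., Silverman, \emph{Arithmetic of Elliptic Curves}, Chapters V and VII). The only subtle point is the injectivity of reduction on $n$-torsion, which is exactly where the hypotheses ``good reduction at $p$'' and ``$p \nmid n$'' are used, and the fact that the degree map gives the characteristic polynomial of Frobenius on the Tate module, which is where $p \nmid N_E$ ensures that $\tilde{E}$ is itself an elliptic curve over $\mathbb{F}_p$.
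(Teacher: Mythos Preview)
Your argument is correct and is the standard classical proof; there is nothing to fix. Note, however, that the paper does not actually supply a proof of this proposition: it is introduced as ``well-known'' and left unproved, with the reader implicitly referred to the literature (indeed, Silverman's \emph{The Arithmetic of Elliptic Curves} is in the appendix's bibliography). So there is no ``paper's own proof'' to compare against; what you have written is precisely the argument the author is taking for granted.
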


\subsection{The Chebotarev density theorem}

Recall the Chebotarev density theorem \cite{chebotarev}.  Let $L/F$ be a (finite) Galois extension of number fields and $\mc{C} \subseteq \gal(L/F)$ any subset which is stable by $\gal(L/F)$-conjugation.  Denote by $\Sigma_F$ the set of prime ideals of $F$ and
\[
\Sigma_F(x) := \{ \mf{p} \in \Sigma_F : N_{F/\mbq}(\mf{p}) \leq x \}.
\]
For each prime ideal $\mf{p} \in \Sigma_F$ which is unramified in $L$, let $\gs_{L/F}(\mf{p}) \subseteq \gal(L/F)$ denote the conjugacy class of the Frobenius element attached to any prime $\mf{P}$ of $L$ lying over $\mf{p}$. 
\begin{atheorem} \label{chebotarev}
(The Chebotarev density theorem) We have
\[
\lim_{x \rightarrow \infty} \frac{\#\{\mf{p} \in \Sigma_F(x) : \mf{p} \text{ unramified in $L$ and } \gs_{L/F}(\mf{p}) \subseteq \mc{C}\}}{\# \Sigma_F(x)} = \frac{\# \mc{C}}{ \# \gal(L/F)}.
\]
\end{atheorem}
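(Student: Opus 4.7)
The plan is to prove the Chebotarev density theorem along the classical analytic route, reducing the general Galois case to the abelian (in fact cyclic) case via Artin's induction trick, and then exploiting the known analytic properties of Hecke $L$-functions. First I would set up Artin $L$-functions $L(s,\chi,L/F)$ attached to each character $\chi$ of $\gal(L/F)$, and record the factorization of the Dedekind zeta function
\[
\zeta_L(s)=\prod_{\chi}L(s,\chi,L/F)^{\dim\chi},
\]
where $\chi$ runs over irreducible characters. The crucial analytic input is that $\zeta_L$ has a simple pole at $s=1$, so that the trivial character contributes the pole while all non-trivial Artin $L$-functions must be holomorphic and non-vanishing at $s=1$.

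Next I would establish the abelian case. Here, via class field theory, the Artin $L$-functions coincide with Hecke $L$-functions of characters of the idele class group, and the standard Hadamard--de la Vall\'ee Poussin argument (non-vanishing of $L(s,\chi)$ on $\Re s=1$ for non-trivial $\chi$), combined with the Wiener--Ikehara Tauberian theorem applied to $\zeta_K$ with characters twisted in, yields the desired natural density. Character orthogonality on the abelian quotient then recovers the density $\#\mc{C}/\#\gal(L/F)$, noting that conjugacy classes are singletons in the abelian case.

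Now I would perform Artin's reduction. Given a general Galois extension $L/F$ and a conjugacy class $\mc{C}\subseteq\gal(L/F)$, fix $\sigma\in\mc{C}$, set $H=\langle\sigma\rangle$, and let $K=L^H$ so that $L/K$ is cyclic. Applying the abelian case to $L/K$ provides a density for prime ideals $\mf{P}$ of $K$, unramified in $L$, whose Frobenius in $\gal(L/K)=H$ equals $\sigma$. Pushing down to $F$: an unramified prime $\mf{p}$ of $F$ with $\gs_{L/F}(\mf{p})\subseteq\mc{C}$ contributes $[\gs_{L/F}(\mf{p})\cap H:\{\sigma\}]\cdot[\,\text{centralizer factor}\,]$ primes of $K$ above it of the prescribed type. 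A standard combinatorial bookkeeping, grouping primes of $K$ according to the primes of $F$ below and weighting by $\#C_G(\sigma)/\#H$ (where $C_G(\sigma)$ is the centralizer of $\sigma$ in $G=\gal(L/F)$), converts the $K$-density $1/[L:K]=1/\#H$ into the claimed $F$-density
\[
\frac{\#\mc{C}}{\#\gal(L/F)}=\frac{1}{\#C_G(\sigma)}.
\]

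The main obstacle is the non-vanishing of Hecke $L$-functions on the line $\Re s=1$, which is the true analytic heart of the proof; once this is in hand, together with Wiener--Ikehara, the rest is formal. If one is content with Dirichlet (analytic) density rather than natural density, one can bypass the Tauberian machinery and argue directly with the behavior of the relevant $L$-functions as $s\to 1^+$, which suffices for all subsequent applications in this paper (such as the finiteness of $\{p:\#E(\mbf_p)\text{ prime}\}$ when $\mf{S}_E=0$).
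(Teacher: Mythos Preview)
The paper does not prove this statement: the Chebotarev density theorem is stated as a classical result and simply cited to Tschebotareff's original paper \cite{chebotarev}, with no proof given. There is therefore no ``paper's own proof'' to compare your proposal against.

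That said, your outline follows the standard analytic route (Artin's reduction to the cyclic case, Hecke $L$-functions, non-vanishing on $\Re s=1$, and a Tauberian theorem), which is indeed how the theorem is classically established. One point worth tightening: in the reduction step, the bookkeeping when pushing primes of $K=L^H$ down to $F$ must also discard the primes $\mf{P}$ of $K$ whose residue degree over $F$ exceeds $1$, since these have norm $\geq p^2$ and contribute negligibly to the count in $\Sigma_K(x)$; only the degree-one primes of $K$ over $F$ survive asymptotically, and it is for these that the centralizer-counting argument cleanly yields the factor $\#\mc{C}/\#\gal(L/F)$. With that caveat, your sketch is sound and, for the purposes of this paper, far more than is needed---the theorem is used as a black box.
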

In probabilistic terms, Theorem A\ref{chebotarev} says that the probability that a randomly selected prime ideal $\mf{p}$ satisfies $\gs_{L/K}(\mf{p}) \subseteq \mc{C}$ is $\frac{\# \mc{C}}{ \# \gal(L/F)}$.

\subsection{Correcting the naive heuristic \eqref{naiveheuristic}}

For any positive integer $n$ and subgroup $G \leq GL_2(\mbz/n\mbz)$, define the subset $\Omega_n(G) \subseteq G$ by
\begin{equation} \label{defofomega}
\Omega_n(G) := \{ g \in G : \det(g) + 1 - \tr(g) \notin (\mbz/n\mbz)^* \}.
\end{equation}
The probability that a large randomly chosen integer is coprime to $n$ is $\begin{displaystyle} \frac{\#(\mbz/n\mbz)^*}{\#(\mbz/n\mbz)} \end{displaystyle}$.  On the other hand, by \eqref{numberofpoints}, Proposition A\ref{wellknown} and Theorem A\ref{chebotarev}, the probability that $\#E(\mbf_p)$ is coprime with $n$ is
\[
\frac{\#(\gal(\mbq(E[n])/\mbq) - \Omega_n(\gal(\mbq(E[n])/\mbq)))}{ \#(\gal(\mbq(E[n])/\mbq))}.
\]
Thus, it is natural to multiply \eqref{naiveheuristic} by the correction factor
\begin{equation} \label{correctionfactor}
\frac{\begin{displaystyle} \frac{\#(\gal(\mbq(E[n])/\mbq) - \Omega_n(\gal(\mbq(E[n])/\mbq)))}{ \#(\gal(\mbq(E[n])/\mbq))} \end{displaystyle}}{\begin{displaystyle} \frac{\#(\mbz/n\mbz)^*}{\#(\mbz/n\mbz)}\end{displaystyle}}.
\end{equation}
Noting that
\begin{equation*} \label{pullback}
\Omega_n(\gal(\mbq(E[n])/\mbq)) = \pi^{-1}\left( \Omega_{\gd(n)} \left( \gal(\mbq(E[\gd(n)])/\mbq) \right) \right),
\end{equation*}
where 
$\gd(n) := \prod_{\ell \mid n} \ell$
denotes the square-free kernel of $n$ and  $\pi : GL_2(\mbz/n\mbz) \twoheadrightarrow GL_2(\mbz/\gd(n)\mbz)$ denotes the canonical projection, we see that \eqref{correctionfactor} only depends on $\gd(n)$, and so it suffices to consider square-free $n$.  Defining
\begin{equation} \label{squarefreen}
n = n(z) := \prod_{\ell \leq z} \ell
\end{equation}
to be the square-free number supported on primes $\ell \leq z$, we multiply \eqref{naiveheuristic} by \eqref{correctionfactor} and take the limit as $z \rightarrow \infty$, arriving at Conjecture \ref{koblitzconjecture} with
\begin{equation} \label{constantse}
\mf{S}_E := \lim_{z \rightarrow \infty} \frac{\left(1 - \begin{displaystyle} \frac{\# \Omega_{n(z)}(\gal(\mbq(E[n(z)])/\mbq))}{\# \gal(\mbq(E[n(z)])/\mbq)} \end{displaystyle} \right)}{\begin{displaystyle} \prod_{\ell \mid n(z)} (1 - 1/\ell) \end{displaystyle}}.
\end{equation}

Our next proposition describes $\mf{S}_E$ in more detail.  In particular, it implies that the limit in \eqref{constantse} converges to a finite positive limit, provided it is non-zero for each fixed $z \geq 2$.
\begin{aproposition}
Let $E$ be an elliptic curve over $\mbq$ and let $\mf{S}_E$ be defined by \eqref{constantse}.  There exists a positive square-free integer $n_E \geq 1$ and a real number $\gl_E > 0$ so that
\[
\mf{S}_E =
\frac{\left(1 - \begin{displaystyle} \frac{\# \Omega_{n_E}(\gal(\mbq(E[n_E])/\mbq))}{\# \gal(\mbq(E[n_E])/\mbq)} \end{displaystyle} \right)}{\begin{displaystyle} \prod_{\ell \mid n_E} (1 - 1/\ell) \end{displaystyle}}\cdot \gl_E.
\]
\end{aproposition}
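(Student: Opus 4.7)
The plan is to exploit the product structure that comes from Serre's open image theorem, so as to decompose the limit defining $\mf{S}_E$ into an ``$n_E$-part'' and an Euler product over primes $\ell \nmid n_E$ that converges to $\gl_E$. First, I would invoke Serre's open image theorem (and the analogous structural result for CM elliptic curves) to produce a square-free positive integer $n_E \geq 1$ with the following independence property: for every square-free integer $m$ coprime to $n_E$, restriction yields an isomorphism of Galois groups
\[
\gal(\mbq(E[n_E m])/\mbq) \;\cong\; \gal(\mbq(E[n_E])/\mbq) \,\times\, \prod_{\ell \mid m} H_\ell ,
\]
where $H_\ell := \gal(\mbq(E[\ell])/\mbq) \subseteq GL_2(\mbf_\ell)$ equals all of $GL_2(\mbf_\ell)$ in the non-CM case, and is a subgroup of controlled index (inside the normalizer of a Cartan subgroup depending on the splitting of $\ell$ in the CM field) in the CM case.

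Next, I would observe that the condition ``$\det(g) + 1 - \tr(g) \in (\mbz/n\mbz)^*$'' is local at each prime $\ell \mid n$: it holds iff $\det(g_\ell) + 1 - \tr(g_\ell) \not\equiv 0 \pmod \ell$ for every $\ell \mid n$. Combined with the above direct product decomposition, this yields, for every $z$ large enough that $n_E \mid n(z)$, the factorization
\[
1 - \frac{\# \Omega_{n(z)}(\gal(\mbq(E[n(z)])/\mbq))}{\# \gal(\mbq(E[n(z)])/\mbq)}
= \Bigl( 1 - \tfrac{\# \Omega_{n_E}(\gal(\mbq(E[n_E])/\mbq))}{\# \gal(\mbq(E[n_E])/\mbq)} \Bigr)
\prod_{\substack{\ell \leq z \\ \ell \nmid n_E}} \Bigl( 1 - \tfrac{\# \Omega_\ell(H_\ell)}{\# H_\ell} \Bigr),
\]
and similarly $\prod_{\ell \mid n(z)}(1 - 1/\ell)$ splits as its $n_E$-part times its $m(z)$-part. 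Dividing these two expressions term-by-term as in \eqref{constantse} and passing to the limit $z \to \infty$, the ``$n_E$-factor'' is exactly the prefactor claimed in the proposition, and the remaining infinite product defines
\[
\gl_E := \prod_{\ell \nmid n_E} \frac{1 - \# \Omega_\ell(H_\ell)/\# H_\ell}{1 - 1/\ell}.
\]

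Finally, I would verify that $\gl_E$ is a well-defined positive real number. Positivity of each Euler factor reduces to exhibiting an element of $H_\ell$ not having $1$ as an eigenvalue; in the non-CM case a direct matrix count in $GL_2(\mbf_\ell)$ gives $\# \Omega_\ell(GL_2(\mbf_\ell)) / \# GL_2(\mbf_\ell) = (\ell^2-2)/((\ell-1)^2(\ell+1))$, which is strictly less than $1$ for every prime $\ell$, and the CM case is handled analogously. Convergence of the Euler product then follows from the asymptotic expansion $\# \Omega_\ell(H_\ell) / \# H_\ell = 1/\ell + O(1/\ell^2)$, which makes the $\ell$-th factor in the definition of $\gl_E$ equal to $1 + O(1/\ell^2)$, so the product converges absolutely. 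I expect the main technical obstacle to be the CM case: one must track how $H_\ell$ depends on whether $\ell$ is split, inert or ramified in the CM field, and check that the required asymptotic $1/\ell + O(1/\ell^2)$ persists uniformly in all three regimes; once that is verified, both positivity and convergence follow exactly as in the non-CM case.
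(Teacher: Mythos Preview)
Your proposal is correct and takes essentially the same approach as the paper: the paper's proof consists solely of citations to Serre's open image theorem in the non-CM case and the corresponding structural result for CM curves, together with a reference to Zywina for details, and your argument is precisely the unpacking of what those results give---the direct-product decomposition of $\gal(\mbq(E[n_E m])/\mbq)$ for $m$ coprime to $n_E$, the locality of the $\Omega$-condition, and the resulting Euler-product factorization whose tail converges absolutely. Your explicit count $\#\Omega_\ell(GL_2(\mbf_\ell))/\#GL_2(\mbf_\ell) = (\ell^2-2)/((\ell-1)^2(\ell+1))$ in the non-CM case is correct and indeed recovers the Euler factor $1 - (\ell^2-\ell-1)/((\ell-1)^3(\ell+1))$ quoted in the paper after dividing by $1-1/\ell$.
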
 
\begin{proof}
In the CM case, this follows from \cite[Corollaire, p. 302]{serre} and in the non-CM case from \cite[(2), p. 260]{serre}.  For more details, see \cite{zywina}.
\end{proof}
Although it won't be necessary in what follows, we remark that
\[
\gl_E = 
\begin{cases}
	\begin{displaystyle} \frac{1}{2} \cdot \prod_{\ell \nmid n_E} \left( 1 - \chi(\ell) \frac{\ell^2 - \ell - 1}{(\ell - \chi(\ell))(\ell-1)^2} \right) \end{displaystyle} & \text{if $E$ has CM by $K$}, \\
	\begin{displaystyle} \prod_{\ell \nmid n_E} \left( 1 - \frac{\ell^2 - \ell - 1}{(\ell - 1)^3(\ell + 1)} \right) \end{displaystyle} & \text{if $E$ has no CM},
\end{cases}
\]
where in the CM case, $\chi(\ell) \in \{ \pm 1\}$ denotes the Kronecker symbol giving the splitting of $\ell$ in the imaginary quadratic field $K$.
\begin{acorollary} \label{betterce}
We have
\begin{equation*} 
\mf{S}_E = 0 \; \Longleftrightarrow \; \left( \exists \textrm{ square-free }n_0, \; \Omega_{n_0}(\gal(\mbq(E[n_0])/\mbq)) = \gal(\mbq(E[n_0])/\mbq)\right).
\end{equation*}
\end{acorollary}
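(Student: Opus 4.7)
The plan is to deduce the corollary as a routine consequence of the preceding proposition, which expresses $\mf{S}_E$ in closed form. Write $G_n := \gal(\mbq(E[n])/\mbq)$ throughout. The proposition gives
\[
\mf{S}_E \;=\; \frac{1 - \#\Omega_{n_E}(G_{n_E})/\#G_{n_E}}{\prod_{\ell \mid n_E}(1-1/\ell)} \cdot \gl_E
\]
for some square-free integer $n_E \geq 1$, with $\gl_E > 0$. Since the denominator and $\gl_E$ are strictly positive, $\mf{S}_E = 0$ if and only if $\Omega_{n_E}(G_{n_E}) = G_{n_E}$. The forward implication ($\Rightarrow$) is then immediate, by taking $n_0 := n_E$.

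For the reverse implication ($\Leftarrow$), the key observation is a monotonicity lemma: \emph{if $m \mid n$ are square-free integers with $\Omega_m(G_m) = G_m$, then $\Omega_n(G_n) = G_n$.} I would prove this by a short diagram chase along the canonical surjection $\pi_{n,m} \colon G_n \twoheadrightarrow G_m$ (which exists since $\mbq(E[m]) \subseteq \mbq(E[n])$). Namely, for any $g \in G_n$, the hypothesis applied to $\pi_{n,m}(g) \in G_m$ produces a prime $\ell \mid m$ with $\ell \mid \det(\pi_{n,m}(g)) + 1 - \tr(\pi_{n,m}(g))$; compatibility of $\pi_{n,m}$ with trace and determinant, together with $\ell \mid n$, then forces $\det(g) + 1 - \tr(g) \notin (\mbz/n\mbz)^\ast$, i.e.\ $g \in \Omega_n(G_n)$.

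To finish, I would invoke the limit definition \eqref{constantse} of $\mf{S}_E$ (whose existence is guaranteed by the proposition). Given a square-free $n_0$ with $\Omega_{n_0}(G_{n_0}) = G_{n_0}$, choose $z_0$ so that every prime dividing $n_0$ is at most $z_0$; then $n_0 \mid n(z)$ for all $z \geq z_0$, and the monotonicity lemma gives $\Omega_{n(z)}(G_{n(z)}) = G_{n(z)}$, making the ratio inside \eqref{constantse} identically zero for $z \geq z_0$. Hence $\mf{S}_E = 0$. The argument is wholly elementary once the proposition is granted; the only nontrivial content is the monotonicity lemma, which formalizes the intuition that if every Frobenius is already forced to meet the ``bad'' set modulo $n_0$, then it a fortiori meets the ``bad'' set modulo any multiple of $n_0$.
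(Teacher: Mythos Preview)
Your proof is correct. The paper states this corollary without proof, treating it as immediate from the preceding proposition, so you have supplied the details that the paper omits. Your forward implication is exactly what the paper has in mind (take $n_0 = n_E$); for the reverse implication, rather than appealing to implicit flexibility in the choice of $n_E$ from the proposition's proof, you give a clean self-contained argument via the monotonicity observation and the limit definition \eqref{constantse}, which is perfectly valid and arguably more transparent.
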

In particular, if $\mf{S}_E = 0$, then by \eqref{numberofpoints}, Proposition A\ref{wellknown} and Theorem A\ref{chebotarev}, we have
\begin{equation} \label{cantbeprime}
p \nmid n_0 \cdot N_E \; \Longrightarrow \; \gcd(\#E(\mbf_p), n_0) > 1.
\end{equation}
Since this in turn causes $\{ c_E(p) : p \nmid N_E \}$ to be bounded, we have verified \eqref{converseoftheorem}.

\section{Proof of Theorem A\ref{maintheorem}} \label{proofoftheorem}

To prove Theorem A\ref{maintheorem}, we will apply Theorem A\ref{chebotarev} with $F = \mbq$, $L = \mbq(E[n])$, and 
$$\mc{C} = \left( \gal(\mbq(E[n])/\mbq) - \Omega_n(\gal(\mbq(E[n])/\mbq)) \right),$$ 
with $\Omega_n(G)$ as in \eqref{defofomega} and $n = n(z)$ as in \eqref{squarefreen}.  Fix any prime $p > z$ which doesn't divide $N_E$.  By Proposition A\ref{wellknown}, $p$ is unramified in $\mbq(E[n(z)])$ 
and furthermore we have the following equivalence:
\begin{equation} \label{mainequivalence}
\left( \forall \ell \leq z, \, \ell \nmid \#E(\mbf_p) \right) \; \Longleftrightarrow \; 
\gs_{\mbq(E[n(z)])/\mbq}(p) \nsubseteq \Omega_{n(z)}(\gal(\mbq(E[n(z)])/\mbq)).
\end{equation}

Now consider the Chebotarev factor
\[
\mc{D}_z := \frac{\#(\gal(\mbq(E[n(z)])/\mbq) - \Omega_{n(z)}(\gal(\mbq(E[n(z)])/\mbq)))}{ \#(\gal(\mbq(E[n(z)])/\mbq))}.
\]
By Corollary A\ref{betterce}, we see that
\[
\mf{S}_E > 0 \; \Longrightarrow \; \mc{D}_z > 0.
\]
Thus, provided $\mf{S}_E > 0$, Theorem A\ref{chebotarev} implies the existence of a prime number $p_1 = p_1(E,z)$ for which
\[
\gs_{\mbq(E[n(z)])/\mbq}(p_1) \nsubseteq \Omega_{n(z)}(\gal(\mbq(E[n(z)])/\mbq)). 
\]
By \eqref{mainequivalence}, we see that for each $\ell \leq z$, $\ell$ does not divide $\#E(\mbf_{p_1})$, and so $c_E(p_1) \geq z$.  Since $z$ was arbitrary, Theorem A\ref{maintheorem} follows.

\section{The positivity of $\mf{S}_E$} \label{positivity}

It is now natural to ask:  under what conditions is the constant $\mf{S}_E$ positive?
Because of the Weil Pairing (see \cite{silverman}, for example), for any level $n$, we have that the determinant map restricts to a surjection
\[
\det\colon \gal(\mbq(E[n(z)])/\mbq) \twoheadrightarrow (\mbz/n(z)\mbz)^*.
\]
By Corollary A\ref{betterce}, we are thus led to ask the following question.
\begin{aquestion} \label{firstquestion}
Let $n \geq 1$ be a positive square-free integer, and let $G \leq GL_2(\mbz/n\mbz)$ be a subgroup for which the determinant map restricts to a surjection: 
\[
\det \colon G \twoheadrightarrow (\mbz/n\mbz)^*.
\]
Under which circumstances do we have $\Omega_n(G) = G$?
\end{aquestion}
It is clear from the definitions that, for any $\ell$ dividing $n$ we have
\[
\Omega_\ell(G \!\!\!\mod \ell) = G \!\!\!\mod \ell \quad \Longrightarrow \quad \Omega_n(G) = G.  
\]
We join Serre \cite[I-2]{serre} in leaving the following exercise up to the reader.
\begin{aexercise}
Prove that, for any subgroup $G_\ell \leq GL_2(\mbz/\ell\mbz)$, $\Omega(G_\ell) = G_\ell$ if and only if either
\begin{equation*} 
G_\ell \subseteq \left\{ \begin{pmatrix} 1 & * \\ 0 & * \end{pmatrix} \right\} \quad\quad \text{ or } \quad\quad G_\ell \subseteq \left\{ \begin{pmatrix} * & * \\ 0 & 1 \end{pmatrix} \right\}.
\end{equation*}
\end{aexercise}
Furthermore, $\gal(\mbq(E[\ell])/\mbq) = G_\ell$ as above if and only if $E$ is isogenous over $\mbq$ to some elliptic curve $E'$ over $\mbq$ satisfying $E'[\ell](\mbq) \neq \{ \mc{O}_{E'} \}$ (in the first case, $E'$ is simply $E$).  We record this as
\begin{aremark} \label{localobstruction}
If $E$ is $\mbq$-isogenous to some elliptic curve $E'$ over $\mbq$ for which $E'(\mbq)_{\text{tors}} \neq \{ \mc{O}_{E'} \}$, then $\mf{S}_E = 0$.
\end{aremark}

It is tempting to expect (as Koblitz did) that the converse of Remark A\ref{localobstruction} also holds, but the following example shows that this is not the case.  Let $\ell \neq 2$ be any prime and consider the subgroup $G \leq GL_2(\mbz/2\mbz) \times GL_2(\mbz/\ell\mbz)$ defined by
\begin{equation} \label{defofG}
G
=  \{ (g_2,g_\ell) \in GL_2(\mbz/2\mbz) \times G_1(\mbz/\ell\mbz) : \chi_2(g_2) = \chi_\ell(g_\ell) \},
\end{equation}
where
\begin{equation} \label{defofG1}
G_1(\mbz/\ell\mbz) := \left\{ \begin{pmatrix} \pm 1 & * \\ 0 & * \end{pmatrix} \right\} \leq GL_2(\mbz/\ell\mbz)
\end{equation}
and the characters $\chi_2$ and $\chi_\ell$ are defined by
\begin{equation*} 
\chi_2 : GL_2(\mbz/2\mbz) \longrightarrow GL_2(\mbz/2\mbz) / [GL_2(\mbz/2\mbz),GL_2(\mbz/2\mbz)] \simeq \{ \pm 1 \}
\end{equation*}
and
\begin{equation} \label{defofchiell}
\chi_\ell \left( \begin{pmatrix} \pm 1 & * \\ 0 & * \end{pmatrix} \right) = \pm 1.
\end{equation}
Notice that, even though 
\[
\Omega_2(G \!\!\!\mod 2) \subsetneq G \!\!\!\mod 2 \quad \text{ and } \quad \Omega_\ell(G \!\!\!\mod \ell) \subsetneq G \!\!\!\mod \ell,
\]
we have $\Omega_{2\ell}(G) = G$.  Provided we can find an elliptic curve $E$ over $\mbq$ with $\gal(\mbq(E[2\ell])/\mbq) \leq G$, then $\# E(\mbf_p)$ will only be prime finitely often because whenever it is not divisible by $2$, it must be divisible by $\ell$, and vice versa.

%%%%%%%%%%%%%%%%%%%%%%%%%%%%%%%%%%%%%%%%%%%%%%%%%%%%%%%%%%%%%%%%%%%%%%
%%%%%%%%%%%%%%%%%%%%%%%%%New example%%%%%%%%%%%%%%%%%%%%%%%%%%%%%%%%%%

\subsection{A counterexample to \eqref{positivityofse}}

\begin{aproposition} \label{counterexampleprop}
Let $E$ be the elliptic curve defined by the Weierstrass equation
\[
y^2 = x^3 + 75x + 125.
\]
For any elliptic curve $E'$ over $\mbq$ which is $\mbq$-isogenous to $E$, 
one has $E'(\mbq)_{\text{tors}} = \{ \mc{O}_{E'} \}$.  Nevertheless, $\mf{S}_E = 0$.  
Furthermore, the Mordell--Weil group attached to $E$ is infinite:
\[
\#E(\mbq) = \infty.
\]
\end{aproposition}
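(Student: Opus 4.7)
The plan is to establish the three claims in turn, with the central step being the determination of the mod-$6$ Galois representation $\rho_{E,6}$. For $\#E(\mathbb{Q}) = \infty$ (part (3)), I would exhibit the rational point $P = (-1, 7) \in E(\mathbb{Q})$, noting that $(-1)^3 + 75 \cdot (-1) + 125 = 49 = 7^2$, and invoke the Nagell--Lutz theorem: the discriminant of $E$ is $\Delta_E = -2^4 \cdot 3^3 \cdot 5^7$, and $y(P)^2 = 49$ does not divide $\Delta_E$, so $P$ is not a torsion point and $E$ has positive rank.

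The core of the argument is to prove that the image of $\rho_{E,6}$ lies inside the subgroup $G$ of $GL_2(\mathbb{F}_2) \times GL_2(\mathbb{F}_3)$ defined in \eqref{defofG} with $\ell = 3$. The 2-division polynomial $f(x) = x^3 + 75x + 125$ has no rational roots and so is irreducible, and its discriminant equals $-3^3 \cdot 5^7 = -15 \cdot 375^2$; consequently $\mathbb{Q}(\sqrt{-15}) \subset \mathbb{Q}(E[2])$, the mod-$2$ image is the full $S_3 \simeq GL_2(\mathbb{F}_2)$, and the sign character $\chi_2$ realizes $\chi_{-15}$. For the mod-$3$ representation, the 3-division polynomial is $\psi_3(x) = 3x^4 + 450x^2 + 1500x - 5625$, and one verifies $\psi_3(-5) = 0$, producing the Galois-stable line $L = \langle (-5,\, 5\sqrt{-15}) \rangle \subset E[3]$; the explicit $y$-coordinate shows that Galois acts on $L$ precisely through $\chi_{-15}$, so the image of $\rho_{E,3}$ sits inside $G_1(\mathbb{F}_3)$ with $\chi_3 = \chi_{-15}$. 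Since $\chi_2$ and $\chi_3$ both factor through and agree on $\mathbb{Q}(\sqrt{-15})/\mathbb{Q}$, the image of $\rho_{E,6}$ is contained in $G$. A short case split on the common value of $\chi_2 = \chi_3 \in \{\pm 1\}$ now yields $\Omega_6(G) = G$: when this value is $+1$ the top-left entry $a$ of $g_3$ equals $1$, forcing $\det(g_3) + 1 - \mathrm{tr}(g_3) = (a-1)(d-1) \equiv 0 \pmod 3$; when it is $-1$, the element $g_2$ is a transposition of trace $0$ in $S_3$, forcing $\det(g_2) + 1 - \mathrm{tr}(g_2) \equiv 0 \pmod 2$. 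Corollary A\ref{betterce} then yields $\mathfrak{S}_E = 0$, settling part (2).

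For the torsion claim (1), the irreducibility of $f$ rules out rational $2$-torsion on $E$ and the nontriviality of $\chi_{-15}$ rules out rational $3$-torsion. A mod-$7$ reduction gives $\# E(\mathbb{F}_7) = 4$, which combined with the absence of rational $2$-torsion forces $E(\mathbb{Q})_{\mathrm{tors}} = \{\mathcal{O}_E\}$. To finish I would verify that the only rational prime-degree isogeny from $E$ is the $3$-isogeny $\phi \colon E \to E' := E/L$ (for instance by checking that the $\ell$-division polynomials have no rational factors for the small primes $\ell$ allowed by Mazur's theorem on rational isogenies of elliptic curves over $\mathbb{Q}$), write down $E'$ explicitly via Vélu's formulas, and repeat the above analysis on $E'$. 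By the Weil pairing, the Galois character on the kernel of the dual isogeny $E' \to E$ equals the product of the mod-$3$ cyclotomic character $\chi_{-3}$ with $\chi_{-15}^{-1}$, which is $\chi_5$ --- again nontrivial --- ruling out rational $3$-torsion on $E'$; a suitable mod-$p$ reduction disposes of the remaining torsion on $E'$.

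The principal obstacle is uncovering the arithmetic coincidence that drives this example: the Galois character on the kernel of the rational $3$-isogeny of $E$ is \emph{exactly} the same quadratic character $\chi_{-15}$ that records the sign of the mod-$2$ action. Once this matching is identified, both the vanishing of $\mathfrak{S}_E$ and the torsion analysis of the full isogeny class reduce to mechanical verifications.
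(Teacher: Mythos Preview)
Your proof of parts (2) and (3) is essentially the paper's argument. You find the same $3$-torsion point $(-5,5\sqrt{-15})$ and the same quadratic field $\mathbb{Q}(\sqrt{-15})$ inside both $\mathbb{Q}(E[2])$ and $\mathbb{Q}(E[3])$, and then invoke Corollary~A\ref{betterce}; the explicit verification of $\Omega_6(G)=G$ is something the paper leaves to the preceding discussion. For the rank, you use the point $(-1,7)$ and Nagell--Lutz, whereas the paper exhibits $(5,25)$ and appeals to the already-established triviality of torsion; both are fine.

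For part (1), your route is correct but considerably heavier than the paper's. You propose to enumerate the rational isogeny class via Mazur's theorem and V\'elu's formulas, then analyze the torsion of each curve separately. The paper bypasses this entirely by using the fact that $\#E'(\mathbb{F}_p)=\#E(\mathbb{F}_p)$ whenever $E'$ is $\mathbb{Q}$-isogenous to $E$: two point-counts $\#E(\mathbb{F}_7)=4$ and $\#E(\mathbb{F}_{17})=21$ immediately give $|E'(\mathbb{Q})_{\mathrm{tors}}|\mid\gcd(4,21)=1$ for \emph{every} $E'$ in the isogeny class at once, since reduction modulo a good prime $p>2$ is injective on all torsion. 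This avoids any enumeration of the isogeny class, any appeal to Mazur, and any explicit V\'elu computation. Your approach has the advantage of making the isogeny class visible (and your Weil-pairing computation of the dual character $\chi_5$ is correct and pleasant), but the paper's two-reduction trick is the more efficient argument here.
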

\begin{proof}
Since $N_E = 2^2 \cdot 3^3 \cdot 5^2$, we see that $E$ has good reduction away from 
$p \in \{2,3,5\}$.  One calculates that $\#E(\mbf_7) = 4$ and $\#E(\mbf_{17}) = 21$, 
from which it follows that, for any $E'$ over $\mbq$ which is $\mbq$-isogenous to $E$, 
we have $E'(\mbq)_{\text{tors}} = \{ \mc{O}_{E'} \}$.  On the other hand, recall that 
$\mbq(E[2]) = \mbq(\text{the roots of } x^3+75x+125)$, so that 
$\sqrt{\gD_E} = 2^2 \cdot 3 \cdot 5^3\sqrt{-15} \in \mbq(E[2])$.  
Also, the point $(-5,5\sqrt{-15}) \in E[3](\mbq(\sqrt{-15}))$ shows that 
\[
\mbq(\sqrt{\gD_E}) = \mbq(\sqrt{-15}) \subseteq \mbq(E[2]) \bigcap \mbq(E[3]).
\]
It follows that, taking $\ell = 3$ in \eqref{defofG}, we have 
$\gal(\mbq(E[6])/\mbq) \leq G$, where $\chi_2$ and $\chi_\ell$ correspond to the 
restriction map
\[
\gal(\mbq(E[6])/\mbq) \longrightarrow \gal(\mbq(\sqrt{\gD_E})/\mbq) \simeq \{ \pm 1 \}.
\]
Taking $n_0 = 6$ in Corollary A\ref{betterce}, we see that $\mf{S}_E = 0$.

Finally, the point $(5,25) \in E(\mbq)$ is of infinite order, and so $\#E(\mbq) = \infty$, 
as claimed.
\end{proof}

Furthermore, one can readily verify \eqref{cantbeprime} with $n_0 = 6$ and $E$ as in 
Proposition A\ref{counterexampleprop}, as follows.
For any rational prime $p \geq 7$ and choice of Frobenius automorphism 
$\gs_6(p) \in \gs_{\mbq(E[6])/\mbq}(p)$, we have that
\[
\gs_6(p)(\sqrt{\gD_E}) = \sqrt{\gD_E} \Rightarrow \gs_{\mbq(E[3])/\mbq}(p) 
\subseteq \Omega_3(\gal(\mbq(E[3])/\mbq)) \Rightarrow 3 \mid \#E(\mbf_p)
\]
and
\[
\gs_6(p)(\sqrt{\gD_E}) = -\sqrt{\gD_E} \Rightarrow \gs_{\mbq(E[2])/\mbq}(p) 
\subseteq \Omega_2(\gal(\mbq(E[2])/\mbq)) \Rightarrow 2 \mid \#E(\mbf_p).
\]
Since $\sqrt{\gD_E} = 2^2\cdot 3 \cdot 5^3 \sqrt{-15}$, it follows that for 
$p \nmid 30$, we have
\[
\left( \frac{-15}{p} \right) = 1 \Rightarrow 3 \mid \#E(\mbf_p)
\]
and
\[
\left( \frac{-15}{p} \right) = -1 \Rightarrow 2 \mid \#E(\mbf_p). 
\]
This verifies \eqref{cantbeprime} and shows that
\[
\{ c_E(p) : p \nmid N_E \} = \{ 2, 3 \}.
\]

More generally, we have
\begin{aremark} \label{nonlocalobstruction}
If $E$ is $\mbq$-isogenous to some elliptic curve $E'$ over $\mbq$ for which $E'(\mbq(\sqrt{\gD_{E'}}))_{\text{tors}} \neq \{ \mc{O}_{E'} \}$, then $\mf{S}_E = 0$.
\end{aremark}

Have we covered all possible cases where $\mf{S}_E = 0$?  We will now give an example of a subgroup $G \leq GL_2(\mbz/3\ell\mbz)$ satisfying $\Omega_{3\ell}(G) = G$, where $\ell \geq 5$ is some prime.  Let
\[
\mc{N}_{3} := \left\{ \pm \begin{pmatrix} 1 & 0 \\ 0 & 1 \end{pmatrix}, \pm \begin{pmatrix}  0 & -1 \\ 1 & 0 \end{pmatrix} \right\}  \sqcup \left\{ \pm \begin{pmatrix} 1 & 0 \\ 0 & -1 \end{pmatrix}, \pm \begin{pmatrix}  0 & 1 \\ 1 & 0 \end{pmatrix} \right\} \leq GL_2(\mbz/3\mbz),
\]
and define
\[
G := \{ (g_3,g_\ell) \in \mc{N}_3 \times G_1(\mbz/\ell\mbz) : \det g_3 = \chi_\ell(g_\ell) \},
\]
where $G_1(\mbz/\ell\mbz)$ and $\chi_\ell$ are as in \eqref{defofG1} and \eqref{defofchiell}, respectively, and we are regarding $\det(g_3) \in \mbf_3^* = \{\pm 1\}$.  As before, we have 
\[
\Omega_3(G \!\!\!\mod 3) \subsetneq G \!\!\!\mod 3 \quad \text{ and } \quad \Omega_\ell(G \mod \!\!\! \ell) \subsetneq G \mod \!\!\! \ell,
\]
but $\Omega_{3\ell}(G) = G$.  Perhaps there may also be an elliptic curve $E$ over $\mbq$ with $\gal(\mbq(E[3\ell])/\mbq) \leq G$, though we haven't explicitly exhibited one.

\subsection{Serre curves}

A \textbf{Serre curve} is an elliptic curve $E$ over $\mbq$ for which
\[
\forall n \geq 1, \quad [GL_2(\mbz/n\mbz) : G_E(n)] \leq 2.
\]
(Intuitively, a Serre curve is an elliptic curve for which $\gal(\mbq(E[n])/\mbq)$ is ``as large as possible'' for each $n \geq 1$.)  We remark that, as shown in \cite[Proposition $4.2$]{zywina}, we have 
\[
E \textrm{ is a Serre curve } \Longrightarrow \; \mf{S}_E > 0.
\]
When ordered according to naive height, almost all elliptic curves are Serre curves (see \cite{jonessc}).  Thus, for a ``typical'' elliptic curve $E$ over $\mbq$ one has $\mf{S}_E > 0$.

\section{Concluding remarks} \label{concludingremarks}

As mentioned in the introduction, one can likely prove stronger forms of Theorem A\ref{maintheorem}.  For instance, one could probably use an effective version of the Chebotarev density theorem to obtain a quantitative upper bound for the smallest prime $p$ for which $c_E(p) > x$.

Since we have not completely resolved it, we record here
\begin{aquestion} \label{finalquestion}
Under what conditions do we have $\mf{S}_E > 0$?  
\end{aquestion}
The examples discussed in Section \ref{positivity} seem to indicate that this question is more delicate than it first may seem.
  Conjecture A\ref{koblitzconjecture} has also been generalized to the context where $E$ is defined over a general number field $K$ (see \cite{zywina}), in which case the answer to Question A\ref{finalquestion} may become even more delicate.

\def\refname{References to the appendix}

\end{document}